\definecolor{darkgreen}{RGB}{0,100,0}
\newtheorem{theorem}{Theorem}[section]
\newtheorem{lemma}[theorem]{Lemma}
\newtheorem{example}[theorem]{Example}
\newtheorem{proposition}[theorem]{Proposition}
\newtheorem{remark}[theorem]{Remark}
\newtheorem{corollary}[theorem]{Corollary}
\newtheorem{conjecture}[theorem]{Conjecture}
\newtheorem{question}[theorem]{Question}
\numberwithin{equation}{section}
\numberwithin{figure}{section}
\newcommand{\raisebox{-4pt}{\import{img/}{crossing_nonoriented.pdf_tex}}}{\raisebox{-4pt}{\import{img/}{crossing_nonoriented.pdf_tex}}}
\newcommand{\raisebox{-4pt}{\import{img/}{0-smoothing.pdf_tex}}}{\raisebox{-4pt}{\import{img/}{0-smoothing.pdf_tex}}}
\newcommand{\raisebox{-5pt}{\import{img/}{1-smoothing.pdf_tex}}}{\raisebox{-5pt}{\import{img/}{1-smoothing.pdf_tex}}}
\begin{document}
	
	
	\title{On multiplying curves in the Kauffman bracket skein algebra\\ of the thickened four-holed sphere}
	
	\author{Rhea Palak Bakshi}
	\address{Department of Mathematics, The George Washington University, Washington DC, USA.}
	\email{rhea\_palak@gwu.edu}
	
	\author{Sujoy Mukherjee}
	\address{Department of Mathematics, The George Washington University, Washington DC, USA.}
	\email{sujoymukherjee@gwu.edu}
	
	\author{J\'{o}zef H. Przytycki}
	\address{Department of Mathematics, The George Washington University, Washington DC, USA \& University of Gda\'{n}sk, Poland.}
	\email{przytyck@gwu.edu}
	
	\author{Marithania Silvero}
	\address{Barcelona Graduate School of Mathematics at Universitat de Barcelona, Barcelona, Spain }
	\email{marithania@us.es}
	
	\author{Xiao Wang}
	\address{Department of Mathematics, The George Washington University, Washington DC, USA.}
	\email{wangxiao@gwu.edu}
	
	\begin{abstract}
		
		Based on the presentation of the Kauffman bracket skein module of the torus given by the third author in previous work,  
		Charles D. Frohman and R\u{a}zvan Gelca established a complete description of the multiplicative operation leading to a famous product-to-sum formula. 
		In this paper, we study the multiplicative structure of the Kauffman bracket skein algebra of the thickened four-holed sphere. We present an algorithm to compute the product of any two elements of the algebra, and give an explicit formula for some families of curves. We surmise that the algorithm has quasi-polynomial growth with respect to the number of crossings of a pair of curves. Further, we conjecture the existence of a positive basis for the algebra. 
	\end{abstract}

	\keywords{Kauffman bracket polynomial, 3-manifolds, skein algebra, skein module, noncommutative geometry}
	
	\subjclass[2010]{Primary: 57M25. Secondary: 57M27.}
	
	\maketitle
	
	\section{Introduction}

	Skein modules were introduced in 1987 by the third author (see \cite{Pr1,Pr4}) as an invariant of embeddings of codimension two, modulo ambient isotopy.  In the case of 3-manifolds they generalize quantum invariants of classical links. In particular, the Kauffman bracket skein module and algebra are generalizations of the Kauffman bracket polynomial of links in the 3-sphere.
	
	In 1997,  Frohman and Gelca gave a famous product-to-sum formula for the Kauffman bracket skein algebra (KBSA) of the thickened torus \cite{FG}.
	In this paper we give an algorithm to multiply any two curves in $F_{0,4} \times I$.\footnote{In \cite{Dehn2}, Max Dehn denotes $F_{0,4}$ by $L_4$ and calls it the four-holed sphere. In \cite{FM}, Benson Farb and Dan Margalit call $F_{0,4}$, a lantern. Another name for $F_{0,4}$ is T-shirt. } We give several concrete applications of the algorithm. 
	In particular, we give a simple formula for mutiplying unit fractions\footnote{Of historical interest is the fact that unit fractions were the base for ancient Egyptian fraction calculus \cite{Gol}.} by $\frac{1}{0}$. 
	
	The paper is organized as follows. In the second section we recall background material: the Kauffman bracket skein module of 3-manifolds,
	the skein algebra structure for a thickened surface, and the exact structure (the generators and relations) of the algebras for the torus and the sphere with four holes, $F_{0,4}$. In Section \ref{SecFamilies}, we give closed formulas for the product of some families of curves yielding relatively simple formulas. Finally, in the fourth section we recall the Farey diagram and its basic properties. We then describe an algorithm to compute a product-to-sum formula for every pair of fractions. In the last part we describe future directions and open problems.

	\section{Preliminaries}\label{Section_Prelim}
	
	\subsection{The Kauffman bracket skein module}\label{Subsection_KBSM}
	
	The Kauffman bracket skein module (KBSM) of 3-manifolds is the most extensively studied object in the theory of skein modules. Let $M$ be an oriented $3$-manifold, $k$ a commutative ring with unity and $A$ a fixed invertible element in $k$. Let $\mathcal{L}_{fr}$ be the set of unoriented framed links in $M$ modulo topological equivalence, including the empty link, and $k\mathcal{L}_{fr}$ the free $k$-module generated by $\mathcal{L}_{fr}$. Denote by $S_{2, \infty}$ the submodule of $k\mathcal{L}_{fr}$ generated by all the skein expressions of the form: $$  \parbox{0.4cm}{\includegraphics[scale = 0.75]{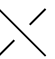}} - A \ \parbox{0.4cm}{\includegraphics[scale = 0.09]{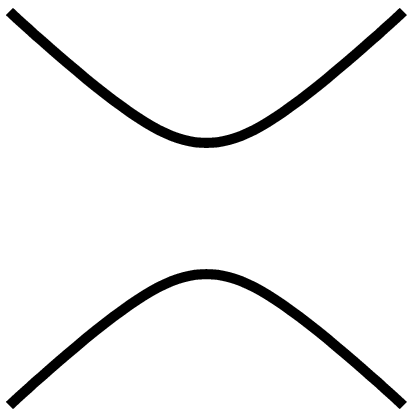}} - A^{-1} \ \parbox{0.4cm}{\includegraphics[scale = 0.09]{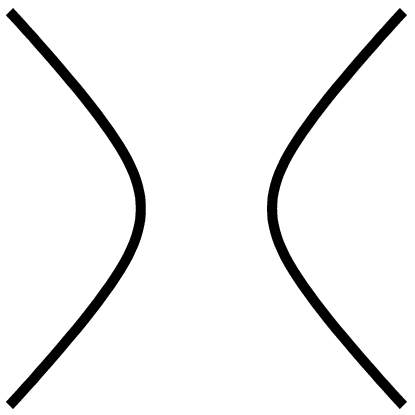}} \ and \ \bigcirc  \sqcup \ L + (A^2 + A^{-2}) L, $$ where $\bigcirc$ denotes the trivial framed knot and the skein triple in the first relation represents framed links which can be isotoped to identical embeddings except within the neighborhood of a single crossing, where they differ as shown. The change of the crossing $\parbox{0.4cm}{\includegraphics[scale = 0.75]{leftcross.eps}} $ to $\parbox{0.4cm}{\includegraphics[scale = 0.09]{L0.eps}}$ and $\parbox{0.4cm}{\includegraphics[scale = 0.09]{Linfty.eps}}$ is called A-smoothing and B-smoothing, respectively.
	
	The \textit{Kauffman bracket skein module} of $M$ is the quotient $\mathcal{S}_{2,\infty}(M; k, A) = k\mathcal{L}_{fr} / S_{2, \infty}$. For brevity, let $\mathcal{S}_{2,\infty}(M)$ denote $\mathcal{S}_{2,\infty}(M;k,A),$ when $k = \mathbb Z[A^{\pm 1}].$
	
 The KBSM for a few 3-manifolds have been computed in \cite{HP, Le, LT, Mar, Pr1}. If the $3$-manifold is the product of a surface and the interval, one can simply project the links onto the surface and work with their link diagrams.
	
	\begin{theorem}\cite{Pr1, Pr4}\label{TeoBasisFxI}
		Let $F$ be an oriented surface and $I = [0,1]$. Then, $\mathcal{S}_{2,\infty}(F \times I)$ is a free module generated by links (i.e. multicurves) including the empty link in $F$ with no trivial components. 
	\end{theorem}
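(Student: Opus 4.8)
The plan is to prove the statement in two stages: first that the indicated multicurves span $\mathcal{S}_{2,\infty}(F\times I)$, and second that they are linearly independent over $k$; together these give freeness with the asserted basis.

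For the spanning stage I would start with an arbitrary framed link $L$ in $F\times I$ and, after a small isotopy, project it to a generic diagram $D$ on $F$ carrying the blackboard framing, so that $D$ has finitely many transverse double points. Applying the first Kauffman bracket relation at each crossing rewrites the class $[D]$ as a $\mathbb{Z}[A^{\pm 1}]$-linear combination of the $2^{c(D)}$ crossingless states, where $c(D)$ is the number of crossings; each state is a disjoint union of simple closed curves in $F$, i.e.\ a multicurve. Every null-homotopic component of such a state bounds a disk in $F$ and may therefore be isotoped into a small ball and removed at the cost of a factor $-(A^2+A^{-2})$ using the second relation. Iterating until no trivial component survives expresses $[L]$ as a combination of multicurves with no trivial components, which proves that these elements generate the module.

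For linear independence I would construct a $k$-linear retraction. Let $\mathcal{B}$ denote the set of isotopy classes of multicurves in $F$ with no trivial components (including the empty one), and let $k\mathcal{B}$ be the free module on $\mathcal{B}$. I would define a map $\Phi\colon k\mathcal{L}_{fr}\to k\mathcal{B}$ by choosing a diagram for each link and performing exactly the crossing-resolution and trivial-circle-removal procedure above, recording the resulting formal combination of elements of $\mathcal{B}$. The substance of the argument is to show that $\Phi$ is well defined, i.e.\ independent of the chosen diagram: since two diagrams on $F$ represent the same framed link precisely when they are connected by a finite sequence of the framed Reidemeister moves of types II and III (a naive type I move being disallowed, as it would alter the framing), it suffices to verify that the state expansion is unchanged under a single move of each type. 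This is the classical Kauffman computation: the four weighted resolutions at the two crossings of a type II move combine, once the trivial-circle relation is invoked, to reproduce exactly the two parallel strands, while invariance under type III then reduces to the type II case — carried out here on the surface, where one must additionally check that the trivial-circle relation is applied consistently whenever a move creates or destroys a null-homotopic loop. This verification is where the main difficulty lies, and it is exactly the computation establishing that the Kauffman bracket descends to a regular-isotopy invariant.

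Once $\Phi$ is known to be well defined it manifestly annihilates every generator of $S_{2,\infty}$, so it factors through a map $\overline{\Phi}\colon \mathcal{S}_{2,\infty}(F\times I)\to k\mathcal{B}$. Composing the natural map $\iota\colon k\mathcal{B}\to \mathcal{S}_{2,\infty}(F\times I)$, which sends a reduced multicurve to its skein class, with $\overline{\Phi}$ returns each basis element to itself, since a crossingless multicurve with no trivial components is its own state expansion; hence $\overline{\Phi}\circ\iota=\mathrm{id}_{k\mathcal{B}}$. This exhibits $\iota$ as a split injection, so any relation $\sum_i c_i\gamma_i=0$ in $\mathcal{S}_{2,\infty}(F\times I)$ among distinct $\gamma_i\in\mathcal{B}$ pulls back to $\sum_i c_i\gamma_i=0$ in $k\mathcal{B}$ and forces all $c_i=0$. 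Together with the spanning stage this shows $\iota$ is an isomorphism, so $\mathcal{S}_{2,\infty}(F\times I)$ is free on the reduced multicurves. (One could alternatively package the independence as a confluence argument, applying the diamond lemma to the terminating rewriting system given by the two relations; but the consistency check one must perform there is the same Reidemeister computation, so it is no easier.)
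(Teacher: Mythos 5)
Note first that the paper does not prove this theorem: it is quoted from \cite{Pr1, Pr4}, and the classical proof given there is exactly the two-stage argument you propose — spanning by resolving all crossings and deleting null-homotopic circles at the cost of $-(A^{2}+A^{-2})$, and freeness by showing that this reduction of diagrams is move-invariant and hence defines a retraction $\overline{\Phi}$ splitting the natural map $k\mathcal{B}\to\mathcal{S}_{2,\infty}(F\times I)$. So your architecture is the standard one. There is, however, one genuine error in the key well-definedness step as you state it: it is \emph{not} true that two blackboard-framed diagrams on $F$ present the same framed link in $F\times I$ precisely when they differ by Reidemeister moves II and III. Those two moves generate regular isotopy, which preserves not only the writhe of each component but also its rotation (Whitney) number, whereas framed isotopy in $F\times I$ does not preserve the latter: a round circle and a diagram of the unknot carrying one positive-crossing curl and one negative-crossing curl of the same rotation sense have equal writhe (hence equal framing) and are framed isotopic in $D^{2}\times I$, yet their rotation numbers differ by $2$, so they are not related by moves II and III alone. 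The correct diagrammatic calculus for framed links is moves II and III together with the additional move cancelling a pair of curls of opposite crossing sign on a strand. As written, your argument only shows $\Phi$ is constant on regular-isotopy classes, so it does not yet descend to framed links, and the split-injection conclusion does not follow.

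The gap is fixable in one line, which is why the classical proof survives: a single positive (resp.\ negative) curl, expanded by the two skein relations, multiplies the state sum by $-A^{3}$ (resp.\ $-A^{-3}$), so a cancelling pair contributes $(-A^{3})(-A^{-3})=1$ and $\Phi$ is invariant under the missing move as well. With that verification added, your proof is complete and agrees with the one in \cite{Pr1, Pr4}. One further clause worth making explicit in the spanning stage: a null-homotopic state circle may have other state circles nested inside its bounding disk in $F$, so before invoking the relation $\bigcirc\sqcup L=-(A^{2}+A^{-2})L$ (which requires a \emph{split} trivial component) you should push the circle vertically toward $F\times\{1\}$, where it bounds a disk disjoint from the rest of the link — this is the point where the interval factor, and not just planar isotopy "into a small ball," is used.
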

	
	The generators in the statement of the above theorem are assumed to have blackboard framing. Let $F_{g,b}$ denote a surface of genus $g$ with $b$ boundary components. Theorem \ref{TeoBasisFxI} applies, in particular, to handlebodies, since $H_{b} = F_{0,b+1} \times [0,1]$, where $H_b$ is a handlebody of genus $b.$ 
	
	We enrich the KBSM of $F \times I$ with an algebra structure where the empty link is the identity element and the multiplication of two elements $L_1 \cdot L_2$ is defined by placing $L_1$ above $L_2$ that is, $L_{1} \subset F\times (\frac{1}{2}, 1)$ and $L_2 \subset F\times(0,\frac{1}{2})$. When we switch the order of the factors in the product of two curves the roles of $A$ and $A^{-1}$ are reversed.
	
The Kauffman bracket skein algebra of a surface times an interval was extensively discussed in \cite{BP, Bull1, FKL, PS1, PS2}. If $F$ is not a disc then the KBSA is an infinite dimensional module. As an algebra, however, it is finitely generated.
	
	\begin{theorem}\cite{Bull1}\label{TeoNumberGen}
		The algebra associated to $\mathcal{S}_{2,\infty}(F \times I)$ is finitely generated, and the minimal number of generators is $2^{rank(H_1(F))} - 1$. 
	\end{theorem}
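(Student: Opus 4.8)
The plan is to prove the two claims of Theorem \ref{TeoNumberGen} separately: first establish an upper bound on the minimal number of generators, then show this bound is sharp via a lower bound argument. Since $H_1(F)$ is a free abelian group of rank equal to the first Betti number of $F$, the target count $2^{\text{rank}(H_1(F))} - 1$ suggests that generators should be indexed by nonempty subsets of a homology basis. First I would fix a system of simple closed curves representing a basis of $H_1(F)$, say $\gamma_1, \dots, \gamma_n$ where $n = \text{rank}(H_1(F))$. For each nonempty subset $S \subseteq \{1,\dots,n\}$, I would construct a single multicurve $\gamma_S$ consisting of a collection of disjoint simple closed curves whose homology classes generate precisely the sublattice indexed by $S$; the natural candidate is a multicurve isotopic to $\bigsqcup_{i \in S} \gamma_i$, arranged to be disjoint in $F$. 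This gives $2^n - 1$ proposed generators.

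The key step for the upper bound is to show every basis element from Theorem \ref{TeoBasisFxI} — an arbitrary multicurve in $F$ with no trivial components — can be written as a polynomial in the $\gamma_S$ with coefficients in $k = \mathbb{Z}[A^{\pm 1}]$. The natural approach is induction on the geometric complexity of the multicurve, measured by the number of components and the number of intersections with the fixed basis curves. Given an arbitrary multicurve $L$, I would use the skein relations to resolve it: whenever $L$ fails to be one of the chosen generators, I would find a crossing (after isotoping $L$ against the standard curves) whose $A$- and $B$-smoothings each yield multicurves of strictly smaller complexity, so that the Kauffman bracket relation $L = A\, L_0 + A^{-1} L_\infty$ expresses $L$ in terms of simpler elements. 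The careful bookkeeping here is that each smoothing either reduces intersection number or removes a trivial circle (absorbed into the ground ring via the $\bigcirc \sqcup L = -(A^2+A^{-2})L$ relation), and that the homology class, which is additive and a skein invariant in the appropriate sense, guides which product of generators the resulting terms land in.

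For the lower bound — showing $2^n - 1$ generators are genuinely necessary — the cleanest tool is to pass to a quotient or an associated graded object that linearizes the problem. I would consider the map sending each multicurve to its total homology class in $H_1(F; \mathbb{Z}) \cong \mathbb{Z}^n$, which induces a $\mathbb{Z}_2$-valued or $\mathbb{Z}^n$-grading on the algebra; reducing mod $2$ and looking at the degree-one part, a generating set must hit every nonzero class in $(\mathbb{Z}/2)^n$, of which there are exactly $2^n - 1$. The point is that multiplication adds homology classes, so products of generators of lower homological ``support'' cannot produce a new nonzero $\mathbb{Z}_2$-homology class that none of them carries; hence at least one generator is required for each of the $2^n - 1$ nonzero classes.

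The hard part will be the lower bound, specifically making rigorous that products cannot create homology classes absent from the factors. In the skein algebra the subtlety is that multiplying two multicurves and resolving crossings via skein relations can change homology classes modulo $2$ (each smoothing preserves the $\mathbb{Z}_2$-homology class of the resulting total curve, but a single product term is a sum of such resolutions). I would need to verify that the $\mathbb{Z}_2$-homology grading is genuinely an algebra grading — i.e., that both smoothings of any crossing preserve the mod-$2$ homology class — which follows because a smoothing is a local move that does not change the class in $H_1(F; \mathbb{Z}/2)$. Once this grading is established, the counting argument for the lower bound and the inductive reduction for the upper bound fit together to pin down the minimal number of generators as exactly $2^{\text{rank}(H_1(F))} - 1$.
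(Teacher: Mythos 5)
The first thing to note is that the paper does not prove this statement at all: it is quoted from Bullock \cite{Bull1}. So your proposal can only be measured against Bullock's argument, and it breaks at two concrete points. First, your generating set does not exist. You propose, for each nonempty $S$, the multicurve $\gamma_S \simeq \bigsqcup_{i\in S}\gamma_i$ ``arranged to be disjoint in $F$,'' but curves representing a basis of $H_1(F)$ generically have nonzero geometric intersection number (on the torus, $\gamma_1=(1,0)$ and $\gamma_2=(0,1)$ meet in one point), so no embedded multicurve with those components exists. Worse, whenever the $\gamma_i$, $i\in S$, \emph{can} be made disjoint, the multicurve $\gamma_S$ is literally the product $\prod_{i\in S}\gamma_i$ in the skein algebra (disjoint curves in $F$ can be pushed into disjoint levels of $F\times I$), so your proposed generators would collapse to just the $n$ curves $\gamma_i$ --- contradicting the count $2^n-1$ you are trying to establish. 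Bullock's generators are instead \emph{single} simple closed curves, one realizing each nonzero class in $H_1(F;\mathbb{Z}_2)$; with your set, the inductive resolution argument for the upper bound cannot even be started correctly.

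Second, the lower-bound argument is invalid as stated. Your observation that both smoothings (and removal of a trivial circle) preserve the class in $H_1(F;\mathbb{Z}_2)$ is correct, so the algebra is indeed $(\mathbb{Z}/2)^n$-graded; but it is false that a generating set of a $G$-graded algebra must meet every nonzero degree, because the sum of two nonzero elements of $(\mathbb{Z}/2)^n$ can be any element: products of generators in classes $e_1$ and $e_2$ land in class $e_1+e_2$, and nothing in the grading alone forces a generator there. The paper's own relation (\ref{eq1}) exhibits exactly this failure: $A\,(1,0)*(0,1)-A^{-1}\,(0,1)*(1,0)=(A^2-A^{-2})\,(1,1)$, so over any coefficient ring in which $A^2-A^{-2}$ is invertible the torus algebra is generated by $(1,0)$ and $(0,1)$ alone, and the count $2^{\mathrm{rank}(H_1)}-1=3$ is simply false. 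Hence any correct minimality proof must use the specific ring $k=\mathbb{Z}[A^{\pm 1}]$ (where $A^2-A^{-2}$ is not a unit), not just the grading: Bullock's route is to bound the rank of $K^{+}/(K^{+})^{2}$, with $K^{+}$ the augmentation ideal spanned by nonempty multicurves, after a suitable specialization of $A$ killing $A^2-A^{-2}$, and to show each nonzero $\mathbb{Z}_2$-homology degree contributes exactly one to this quotient. As written, your sketch establishes only the grading itself; the counting step --- the part you yourself flag as ``the hard part'' --- does not go through and cannot be repaired without bringing in the arithmetic of $\mathbb{Z}[A^{\pm 1}]$.
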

	
	\begin{theorem}\cite{Pr3, PS1, PS2}\label{TeoCenter}
	The center of the algebra $\mathcal{S}_{2,\infty}(F \times I)$ is a subalgebra generated by the boundary curves of $F$.\footnote{In \cite{FKL} the center of the skein algebra is analyzed when $A$ is a root of unity.}
	\end{theorem}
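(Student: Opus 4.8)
The plan is to establish the two inclusions separately. That the subalgebra generated by the boundary curves lies in the center is the easy direction. If $\partial$ is a simple closed curve parallel to a boundary component of $F$, then it can be isotoped into a collar $\partial\times I$ of that component, where it is disjoint from the projection of any other diagram. Consequently, for every framed link $L$ one may slide $\partial$ freely in the $I$-coordinate past $L$, so that $\partial\cdot L=L\cdot\partial$ in $\mathcal{S}_{2,\infty}(F\times I)$. Since the central elements form a $k$-subalgebra closed under products and linear combinations, the entire subalgebra generated by the boundary curves is central.

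For the reverse inclusion I would argue by a leading-term (filtration) argument built on the multicurve basis of Theorem \ref{TeoBasisFxI}. First I would fix a filling collection of curves on $F$ and filter the basis of multicurves by geometric intersection number against this collection, refined to a well-order. The main tool is a product formula for the top term: if $L_1$ and $L_2$ are multicurves realizing $n=i(L_1,L_2)$ essential crossings when $L_1$ is stacked over $L_2$, then resolving these crossings by the Kauffman relation gives
$$ L_1\cdot L_2 \;=\; A^{\,n}\,S \;+\; (\text{lower-order terms}), $$
where $S$ is the multicurve obtained by performing the A-smoothing at every crossing. Reversing the roles of the factors reverses every crossing, so the analogous top term of $L_2\cdot L_1$ is $A^{-n}S+\cdots$; the two leading coefficients then differ by the factor $A^{n}-A^{-n}$, which is nonzero in $\mathbb{Z}[A^{\pm 1}]$ whenever $n>0$.

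Now let $x$ be a central element, expanded in the multicurve basis, and suppose for contradiction that some basis element occurring in $x$ has a component $\alpha$ that is essential and not parallel to a boundary component. Because an essential, non-peripheral simple closed curve always has positive geometric intersection number with some simple closed curve, I may choose $\beta$ with $i(\alpha,\beta)>0$. Selecting the filtration-maximal basis element on which $\beta$ acts nontrivially and applying the product formula to the commutator $[\beta,x]=\beta\cdot x-x\cdot\beta$, the top term survives with coefficient a nonzero monomial multiple of $A^{n}-A^{-n}$, whence $[\beta,x]\neq 0$. This contradicts the centrality of $x$. Therefore every basis multicurve occurring in $x$ consists solely of boundary-parallel curves; as each such multicurve is a skein product of boundary curves (its disjoint components cross nowhere), $x$ lies in the subalgebra generated by the boundary curves.

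The hard part will be the bookkeeping in the last step: I must choose the filtration so that the top term produced by $\beta\cdot L$ for the maximal multicurve $L$ is not cancelled, either by the lower-order terms of $\beta\cdot L$ itself or by the contributions of $\beta$ multiplied against the non-maximal basis elements of $x$. Making this precise amounts to showing that the associated graded algebra for this filtration is commutative and that multiplication by $\beta$ raises the intersection complexity in a controlled, predictable way. This is exactly where the geometry of geometric intersection numbers on $F$ must be used carefully, and it is the step I expect to be the main obstacle.
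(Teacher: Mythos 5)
The paper itself states this theorem without proof, citing \cite{Pr3, PS1, PS2}, so the comparison is with the proofs in those references; your two-inclusion plan follows the same route they do. Your easy direction is correct and complete: a boundary-parallel curve isotopes into a collar $\partial F\times I$, slides freely in the $I$-direction past any link, hence is central, and centrality passes to the generated subalgebra. Your hard direction is also the right strategy in outline --- a leading-term argument on the multicurve basis of Theorem \ref{TeoBasisFxI}, using that the commutator of $\beta$ with a maximal basis element has top coefficient a unit multiple of $A^{n}-A^{-n}\neq 0$ in $\mathbb{Z}[A^{\pm 1}]$ when $n=i(\alpha,\beta)>0$ (consistent with the root-of-unity caveat in the footnote referring to \cite{FKL}).

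However, there is a genuine gap, and it is exactly the step you defer: essentially all of the content of the theorem lives in producing a filtration for which your displayed product formula is valid and non-cancelling. Concretely you need: (i) that for multicurves in minimal position the all-$A$ state $S$ contains no trivial loops and is again a basis element; (ii) an order on basis multicurves in which $S$ is \emph{strictly} maximal among all $2^{n}$ resolution states, so that ``lower-order terms'' means something; and (iii) injectivity of $L\mapsto S(\beta,L)$ together with order-compatibility, so that the contributions of $\beta$ against the non-maximal basis elements of $x$ cannot cancel the chosen top term. None of these holds automatically for your proposed filtration by intersection with a filling system --- intersection number with a fixed filling collection is not monotone under Kauffman smoothing, and the all-$A$ state need not maximize it. The cited proofs (see \cite{PS1, PS2}) resolve this by working in Dehn--Thurston coordinates relative to a pants decomposition, where the product of two basis multicurves $a,b$ has a unique highest term $A^{\langle a,b\rangle}$ times the multicurve with coordinatewise-sum coordinates; this is the key technical lemma your proposal lacks. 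Finally, your closing remark that one must show ``the associated graded algebra for this filtration is commutative'' is backwards: if the graded algebra were commutative, the leading term of $[\beta,x]$ would vanish and your argument would collapse. What the actual proof establishes is that the graded algebra is $A$-commutative, $\bar{\beta}\,\bar{L}=A^{2n}\bar{L}\,\bar{\beta}$ with $n>0$, and it is precisely this noncommutativity of leading terms that forces a central element to be supported on boundary-parallel multicurves.
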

	
	The exact structure of the KBSA has been computed for small surfaces $F_{g,b}$, namely, it is known for $(g,b) \in \{ (0,0), (1,0), (1,1), (1,2), (0,1), (0,2), (0,3), (0,4) \}$. In particular, the KBSA is commutative in the following cases:
	
	\begin{proposition}\cite{BP}  As an algebra,
		\begin{enumerate}
			
			\item $\mathcal{S}_{2, \infty}(F_{0,0} \times I;k,A) = \mathcal{S}_{2, \infty}(F_{0,1} \times I;k,A) = k$.
			\item $\mathcal{S}_{2, \infty}(F_{0,2} \times I;k,A) = k[x]$, where $x$ is a curve parallel to a boundary component.
			\item $\mathcal{S}_{2, \infty}(F_{0,3} \times I;k,A) = k[x,y,z]$, where $x,y,z$ are curves parallel to the boundary components.
		\end{enumerate}
	\end{proposition}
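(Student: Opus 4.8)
The plan is to combine the module-level basis theorem with a direct analysis of the stacking product. By Theorem~\ref{TeoBasisFxI}, each module $\mathcal{S}_{2,\infty}(F_{0,b}\times I)$ is free on the set of isotopy classes of multicurves in $F_{0,b}$ having no contractible components, with the empty multicurve as a distinguished generator. So the first step in every case is purely topological: classify the essential (non-contractible) simple closed curves on $F_{0,b}$ up to isotopy, and thereby enumerate the allowable multicurves. The second step is algebraic: show that the stacking product of any two basis multicurves can be realized without introducing crossings, so that no Kauffman skein relation is ever invoked and the product is simply the disjoint superposition of the two multicurves. Together these two steps pin down both the module structure and the algebra structure in each case.

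For the disc $F_{0,1}=D^2$ and the sphere $F_{0,0}=S^2$, I would observe that every simple closed curve bounds a disc (on $S^2$ a curve separates the sphere into two discs), and is therefore contractible; the only multicurve with no trivial components is thus the empty one. This yields a free module on a single generator, and since the product of the empty link with itself is the empty link, the algebra is $k$. For the annulus $F_{0,2}$, the only isotopy class of essential curve is the core $x$, so the multicurves are precisely the families of $n\ge 0$ parallel copies of $x$; these form the module basis $\{1,x,x^2,\dots\}$, giving $k[x]$ as a module. Stacking $m$ parallel cores above $n$ parallel cores produces $m+n$ disjoint parallel cores with no crossings, so $x^m\cdot x^n=x^{m+n}$ and the algebra is the polynomial ring $k[x]$.

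The pair of pants $F_{0,3}$ is the substantive case. The key topological input is the classical fact that the only essential simple closed curves on $F_{0,3}$, up to isotopy, are the three boundary-parallel curves $x,y,z$, and that these can be realized pairwise disjointly by pushing each toward its own boundary component. Consequently every multicurve without trivial components is isotopic to a disjoint union of $i$ copies of $x$, $j$ copies of $y$, and $k$ copies of $z$; labelling it $x^iy^jz^k$ identifies the module basis with the monomials of $k[x,y,z]$. Because these representatives are mutually disjoint, stacking one such multicurve above another introduces no crossings, so the product adds exponents coordinatewise and agrees with polynomial multiplication. Commutativity is then immediate from Theorem~\ref{TeoCenter}: the curves $x,y,z$ are boundary-parallel and hence central, and since they generate the whole algebra, the center is everything.

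The main obstacle I anticipate lies entirely in the topology of the curve classification, and specifically in the claim that products introduce no crossings. For the pair of pants one must verify carefully that a single multicurve may contain all three boundary-parallel types simultaneously, and that when two such multicurves are stacked they can be ambient isotoped to disjoint parallel representatives, so that no smoothing is ever required; this is exactly what makes the product multiplicative rather than merely filtered. Establishing that $F_{0,3}$ carries no essential simple closed curve beyond the three boundary-parallel ones—so that the monomials $x^iy^jz^k$ genuinely exhaust the basis and no hidden relations arise—is the crux, after which the identification with $k[x,y,z]$ is formal.
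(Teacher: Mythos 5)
Your proposal is correct and follows essentially the argument behind the cited result: the paper itself gives no proof (it quotes this from [BP]), and the standard proof there is exactly your two-step scheme — invoke Theorem~\ref{TeoBasisFxI} to get the free module basis of multicurves without trivial components, classify essential simple closed curves on $F_{0,b}$ for $b\leq 3$ as boundary-parallel (or nonexistent for $b\leq 1$), and observe that basis multicurves can be stacked disjointly so the product is exponent addition. Your extra appeal to Theorem~\ref{TeoCenter} for commutativity is unnecessary (commutativity already follows from the crossing-free product formula $x^iy^jz^k\cdot x^ay^bz^c=x^{i+a}y^{j+b}z^{k+c}$) but harmless.
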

	
	In all other cases the algebra $S_{2,\infty}(F_{g,b} \times I)$ is noncommutative \cite{BP}.
	
We continue with the study of the KBSA($F_{0,4}\times I$), extending techniques developed by Frohman and Gelca in \cite{FG} for the torus. The remainder of this subsection is devoted to reviewing the description of the KBSA$(F_{1,0} \times I)$ and KBSA$(F_{0,4} \times I)$, as well as the relation between them based on the double branched cover $F_{1,0} \longrightarrow S^2$ branched along four points (see Figure \ref{abcdef}).

\begin{figure}
	\centering
	\includegraphics[scale = 0.4]{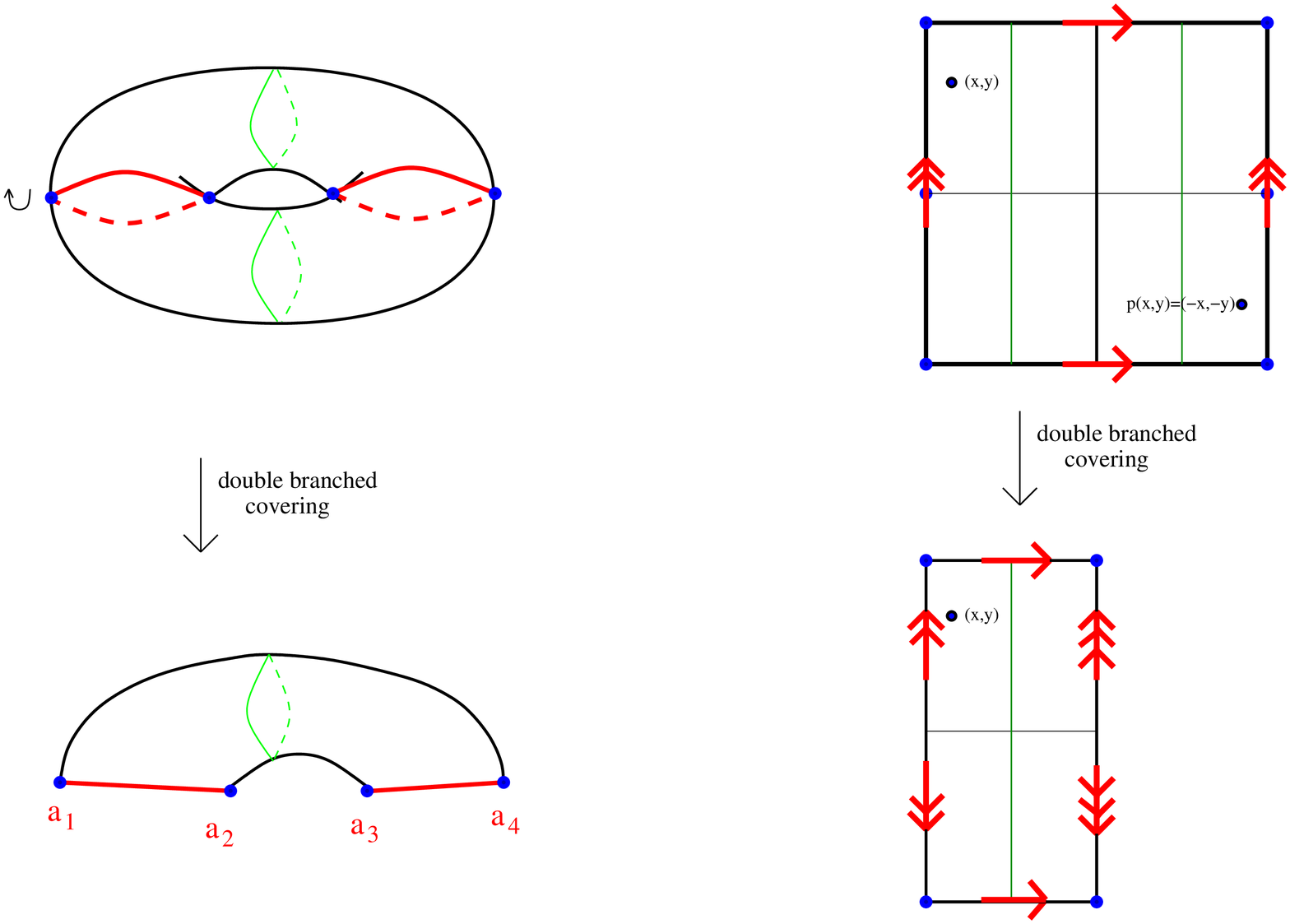}
	\caption{Double branched cover of $F_{1,0} \longrightarrow S^2 $ branched along four points. }
	\label{abcdef}
\end{figure}
	
The multicurves on the torus are parametrized by the pairs $(d,n)\in \mathbb{Z}^+\times \mathbb{Z}$, often denoted by unreduced fractions $\frac{n}{d}.$ The number of components of this multicurve is equal to $r=gcd(d,n)$ and the reduced fraction $\frac{n/r}{d/r}$ is called the slope of the multicurve. See Figure \ref{Prod_colors} for a graphic representation of the product $(1,0)*(0,1)*(1,1)$ in the torus. By convention, we denote  the curve $(-d,n)$ by $(d,-n)$, $d>0$. Further, let $(0,0)$ represent the empty link.
	
\begin{figure}
	\centering
	\includegraphics[scale = 1.75]{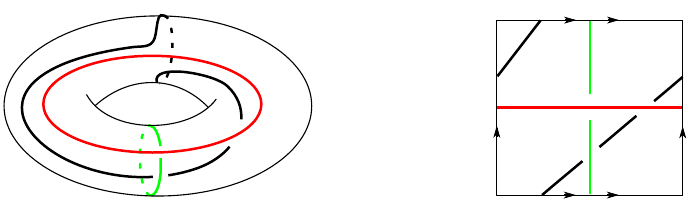}
	\caption{\small{The product $(1,0)*(0,1)*(1,1)$ in the KBSA of $F_{1,0} \times I$.}} 
	\label{Prod_colors}
\end{figure}

Similarly when $F = F_{0,4}$, we define the pair $(d,n) \in \mathbb Z^+ \times \mathbb Z$ (also denoted by $\frac{n}{d}$) to be the multicurve whose (minimal) intersection numbers with the $x$-axis and the $y$-axis are $2|n|$ and $2d$, respectively. 
	The sign of $n$ is given by the ``direction'' of the curve. This is a special case of Dehn coordinates of multicurves 
	in any oriented surface with negative Euler characteristic (see \cite{Dehn1,P-H} for a detailed description).

	\begin{figure}
		\centering
		\includegraphics[width=12.5cm]{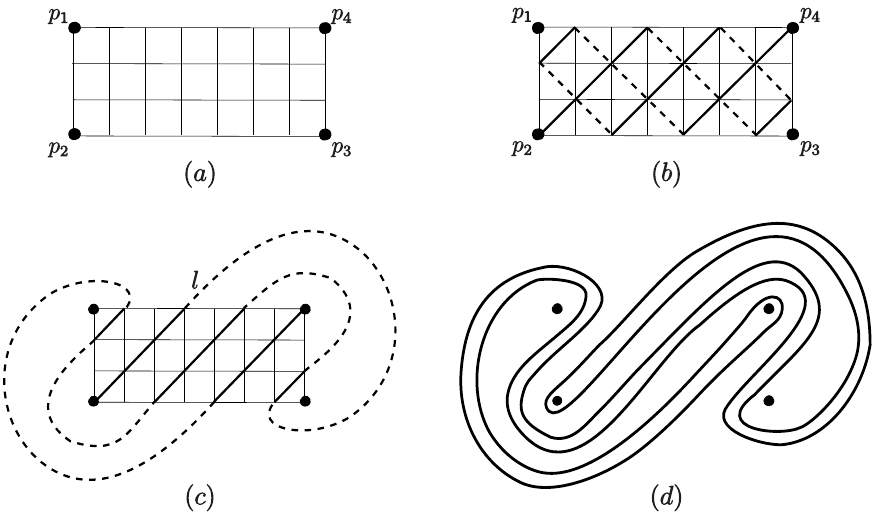}
		\caption{The curve $(3,7) =\frac{7}{3}  $ is drawn  as follows.  Start with a $3\times 7$ grid as shown in (a). The vertices of the rectangle represent the $4$ punctures in $S^2$. We draw an arc starting at the lower left corner of the `pillowcase' in (b). The arc in (c) is the core of the curve $(3,7)$ shown in (d).}
		\label{pillowcase}
	\end{figure}

	\begin{remark}\label{b}
	 There are many ways of drawing curves of given slopes in $F_{0,4}$. Figure \ref{pillowcase} illustrates one such way of drawing the curve $(d,n), \ n > 0,$ in $F_{0,4} \times I$. If $n<0,$ one reflects the curve $(d,|n|)$ about the vertical axis.
	\end{remark}
	
	A presentation of the algebras $S_{2,\infty}(F_{1,0} \times I)$ and $S_{2,\infty}(F_{0,4} \times I)$ is given in \cite{BP}. We now recall this presentation for convenience, since it will be useful when comparing the multiplicative structure of $S_{2,\infty}(F_{0,4} \times I)$ with that of $S_{2,\infty}(F_{1,0} \times I)$. 
	
	The KBSA of $F_{1,0} \times I$ is generated by the simple curves $(0,1), (1,0), (1,1)$; see \cite{BP}. In order to get the set of relations, we start by computing the relations for the KBSA of $F_{1,1} \times I$. Figure \ref{FigF11} illustrates the following products of curves:
	$$
	(1,0) * (0,1) = A (1,1) + A^{-1} (1,-1), \quad \quad (0,1) * (1,0) = A (1,-1) + A^{-1} (1,1).
	$$
	
	Therefore,  
	\begin{equation}\label{eq1}
	A (1,0) * (0,1) - A^{-1} (0,1) * (1,0) = (A^2 - A^{-2}) (1,1).
	\end{equation}
	
	\begin{figure}
		\centering
		\includegraphics[width = 12.5cm]{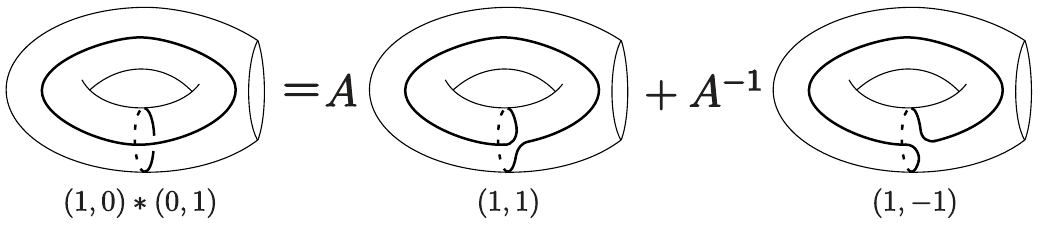}
		\caption{\small{The product $(1,0)*(0,1)$ in the KBSA of $F_{1,1} \times I $.}} 
		\label{FigF11}
	\end{figure}
	
	In a similar way one gets:
	\begin{equation}\label{eq2}
	A (0,1) * (1,1) - A^{-1} (1,1) * (0,1) = (A^2 - A^{-2}) (1,0), 
	\end{equation}
	\begin{equation}\label{eq3}
	A (1,1) * (1,0) - A^{-1} (1,0) * (1,1) = (A^2 - A^{-2}) (0,1).
	\end{equation} 
	
	Equations \ref{eq1}-\ref{eq3} constitute the relations of the KBSA of the punctured torus $F_{1,1}$ (see Theorem \ref{TeoF11F10}(1)). The curve $\partial$, parallel to the boundary, is generated by the former three generators, that is,  
	$$
	(1,-1) * (1,1) = A^2 (1,0)^2 + \partial + d + A^{-2} (0,1)^2,
	$$
	where $d = -A^{2} - A^{-2}$ corresponds to the trivial contractible curve.

	Equations \ref{eq1}-\ref{eq3} also hold for the KBSA of the torus. Moreover, since the boundary curve $\partial$ becomes contractible in $F_{1,0}$, we get the `\textit{long relation}' for the KBSA of the torus (the $\ell^{th}$-power of a curve denotes $\ell$ parallel copies of it):
	\begin{equation}\label{eq4}
	A^2 [(1,1)^2 + (1,0)^2] + A^{-2} (0,1)^2 - 2(A^{2} + A^{-2}) - A ((1,0) * (0,1) * (1,1)) = 0.
	\end{equation}
	
	\begin{theorem}\cite{BP}\label{TeoF11F10}
		As an algebra, 
		\begin{enumerate}
			\item $\mathcal{S}_{2,\infty}(F_{1,1} \times I) = k \left\{(1,0), (0,1), (1,1)\right\} / (\ref{eq1}), (\ref{eq2}), (\ref{eq3})$.
			\item $\mathcal{S}_{2,\infty}(F_{1,0} \times I) = k \left\{(1,0), (0,1), (1,1)\right\} / (\ref{eq1}), (\ref{eq2}), (\ref{eq3}), (\ref{eq4})$.
		\end{enumerate}
	\end{theorem}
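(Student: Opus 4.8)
The plan is to establish each presentation by constructing an algebra surjection from the abstractly presented algebra onto the skein algebra and then checking injectivity against the free module basis of Theorem \ref{TeoBasisFxI}. I would treat the one-holed torus (part (1)) first and recover the closed torus (part (2)) as a further quotient.

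First I would confirm that the stated relations actually hold in $\mathcal{S}_{2,\infty}(F_{1,1}\times I)$. Relation (\ref{eq1}) is immediate from the two product resolutions displayed in Figure \ref{FigF11}, namely $(1,0)*(0,1)=A(1,1)+A^{-1}(1,-1)$ and $(0,1)*(1,0)=A(1,-1)+A^{-1}(1,1)$, upon eliminating $(1,-1)$; relations (\ref{eq2}) and (\ref{eq3}) then follow from the order-three $\mathrm{SL}(2,\mathbb{Z})$-symmetry that cyclically permutes the Farey triangle of slopes $\{(1,0),(0,1),(1,1)\}=\{0,\infty,1\}$. Hence there is a well-defined algebra homomorphism $\phi\colon \mathcal{A}\to \mathcal{S}_{2,\infty}(F_{1,1}\times I)$, where $\mathcal{A}=k\{(1,0),(0,1),(1,1)\}/\big((\ref{eq1}),(\ref{eq2}),(\ref{eq3})\big)$, sending each generator to the corresponding curve. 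To see that $\phi$ is surjective I would show that the three curves generate: by Theorem \ref{TeoNumberGen} the minimal number of generators is $2^{\mathrm{rank}\,H_1(F_{1,1})}-1=3$, and I would realize every basis multicurve as a polynomial in them by inducting along the Euclidean algorithm on the slope, using the resolution formulas above together with the boundary identity $(1,-1)*(1,1)=A^2(1,0)^2+\partial+d+A^{-2}(0,1)^2$ (which expresses the boundary-parallel curve $\partial$ in the generators) to lower the complexity $d+|n|$ at each step.

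The substance of the argument is the injectivity of $\phi$. Here I would put a filtration on $\mathcal{S}_{2,\infty}(F_{1,1}\times I)$ by geometric complexity of multicurves (for instance $d+|n|$ together with the number of boundary-parallel components) and observe that, by the product-to-sum behaviour of the skein relations, any product of generators resolves to a unique \emph{leading} multicurve term of top complexity, obtained by ``adding'' the slopes of the factors, plus strictly lower terms. Reading relations (\ref{eq1})--(\ref{eq3}) as straightening rules, I would produce a spanning set of $\mathcal{A}$ whose images under $\phi$ have pairwise distinct leading terms exhausting the free module basis $\{(d,n)\,\partial^{k}\}$ of Theorem \ref{TeoBasisFxI}; recall that an essential multicurve in $F_{1,1}$ is parallel copies of a single slope together with boundary-parallel components. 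Triangularity with respect to the filtration then forces these images to be linearly independent, so any element of $\ker\phi$ is zero and $\phi$ is an isomorphism. \textbf{The main obstacle} is exactly the bookkeeping that makes this work: one must verify that the straightening rules together with the expression for $\partial$ reduce every element to the chosen spanning set (a confluence/consistency check among the overlapping relations), and that the leading-term map is genuinely a bijection onto the basis rather than merely a surjection. This is the computational heart, where the coefficients $A^{\pm1}$ and $A^{2}-A^{-2}$ must cooperate.

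For part (2) I would deduce the closed torus from the one-holed torus by capping the boundary, which makes $\partial$ a contractible loop and hence equal to the scalar $d=-A^2-A^{-2}$. Substituting $\partial=d$ into the boundary identity gives $(1,-1)*(1,1)=A^2(1,0)^2+A^{-2}(0,1)^2-2(A^2+A^{-2})$, and combining this with the always-valid identity $A\,(1,0)*(0,1)*(1,1)=A^2(1,1)^2+(1,-1)*(1,1)$ (which follows from the resolution $(1,0)*(0,1)=A(1,1)+A^{-1}(1,-1)$ underlying (\ref{eq1})) yields precisely the long relation (\ref{eq4}); conversely (\ref{eq4}) encodes $\partial=d$. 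Adjoining (\ref{eq4}) to the presentation of part (1) therefore collapses every $\partial$-factor to a scalar, so the leading-term normal forms reduce to the family $\{(d,n)\}$, which is exactly the free basis of $\mathcal{S}_{2,\infty}(F_{1,0}\times I)$. The same triangularity count then gives the isomorphism.
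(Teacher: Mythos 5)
You should first note that the paper contains no proof of Theorem \ref{TeoF11F10} to compare against: the result is quoted from \cite{BP}, and the surrounding text establishes only the easy direction, namely that the relations hold --- (\ref{eq1})--(\ref{eq3}) from the resolutions in Figure \ref{FigF11}, and (\ref{eq4}) from the observation that $\partial$ becomes contractible in $F_{1,0}$. The portions of your proposal that overlap with what the paper actually does are correct and match it: your verification of (\ref{eq1}) by eliminating $(1,-1)$ from the two displayed resolutions, the symmetry argument for (\ref{eq2})--(\ref{eq3}), the identity $A\,(1,0)*(0,1)*(1,1)=A^2(1,1)^2+(1,-1)*(1,1)$, and the resulting equivalence of (\ref{eq4}) with the substitution $\partial=-A^2-A^{-2}$ in the boundary identity are all sound, as is the reduction of part (2) to part (1) via capping (the needed fact that the kernel of $\mathcal{S}_{2,\infty}(F_{1,1}\times I)\to\mathcal{S}_{2,\infty}(F_{1,0}\times I)$ is the ideal $(\partial-d)$ follows from comparing the free bases of Theorem \ref{TeoBasisFxI}, as you implicitly use).

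The genuine gap is in the injectivity argument, and it is not merely the bookkeeping you flag: the pivotal claim --- that with respect to the complexity $d+|n|$ ``any product of generators resolves to a unique leading multicurve term of top complexity, obtained by adding the slopes'' --- is false as stated. Already $(1,0)*(0,1)=A(1,1)+A^{-1}(1,-1)$ has its slope-sum term $(1,1)$ and slope-difference term $(1,-1)$ tied at complexity $d+|n|=2$, so there is no unique leading term for this filtration at the very first product your induction needs. The failure is structural, not an unlucky choice: by the paper's boundary identity, $(1,-1)*(1,1)=A^2(1,0)^2+\partial+d+A^{-2}(0,1)^2$, where the top terms $(2,0)$ and $(0,2)$ again tie under $d+|n|$ (and under any symmetric count of this kind), and complexity is far from additive since $\partial$ and the scalar are much smaller than the factors. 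Consequently the triangularity of the monomials $x^ay^bz^c$ against the basis $\{(d,n)\,\partial^k\}$ of Theorem \ref{TeoBasisFxI} cannot be read off from such a filtration; one must engineer a finer well-ordering under which exactly one of the two extreme smoothings strictly dominates, and then verify confluence of the straightening rules (\ref{eq1})--(\ref{eq3}) relative to it --- which is precisely the content of the proof in \cite{BP} and is absent here. As written, your proposal proves surjectivity of $\phi$ and correctly derives (\ref{eq4}), but the injectivity step rests on a leading-term statement that demonstrably fails for the order you propose, so the argument is incomplete at its decisive point.
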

	
	Now, we consider the KBSA of $F_{0,4}\times I$, which is generated (as an algebra) by the curves $(1,0), (0,1), (1,1)$ together with the curves $a_i$ where $i=1,2,3,4$ and the curves $a_{i}$  are parallel to $p_i$ (see Figure \ref{pillowcase} (a)). Recall that the boundary curves generate the center of the algebra. We can consider a bigger commutative ring $K = k[a_1, a_2, a_3, a_4]$ and consider the KBSA over this ring. The computation of the product of  the curves $(1,0)$ and $(0,1)$ is illustrated in Figure \ref{FigF10}:
	$$
	(1,0) * (0,1) = A^2 (1,1) + a_1a_3 +a_2a_4 + A^{-2}(1,-1).$$ Therefore, 
	$$
	(0,1) * (1,0) = A^2 (1,-1) + a_1a_3 + a_2a_4 + A^{-2} (1,1).$$
	
	\begin{figure}
		\centering
		\includegraphics[width = 13.1cm]{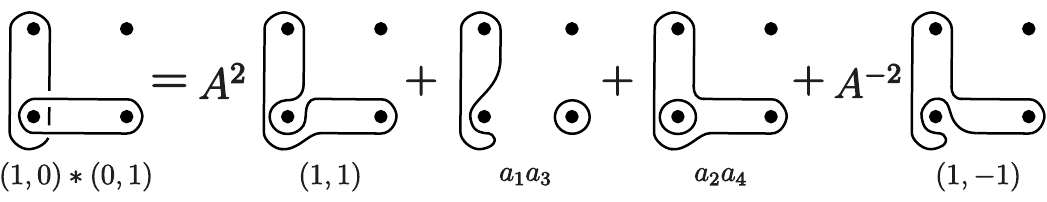}
		\caption{\small{The product $(1,0)*(0,1)$ in $F_{0,4}$.}} 
		\label{FigF10}
	\end{figure}
	
	\begin{figure}
		\centering
		\includegraphics[width = 12.5cm]{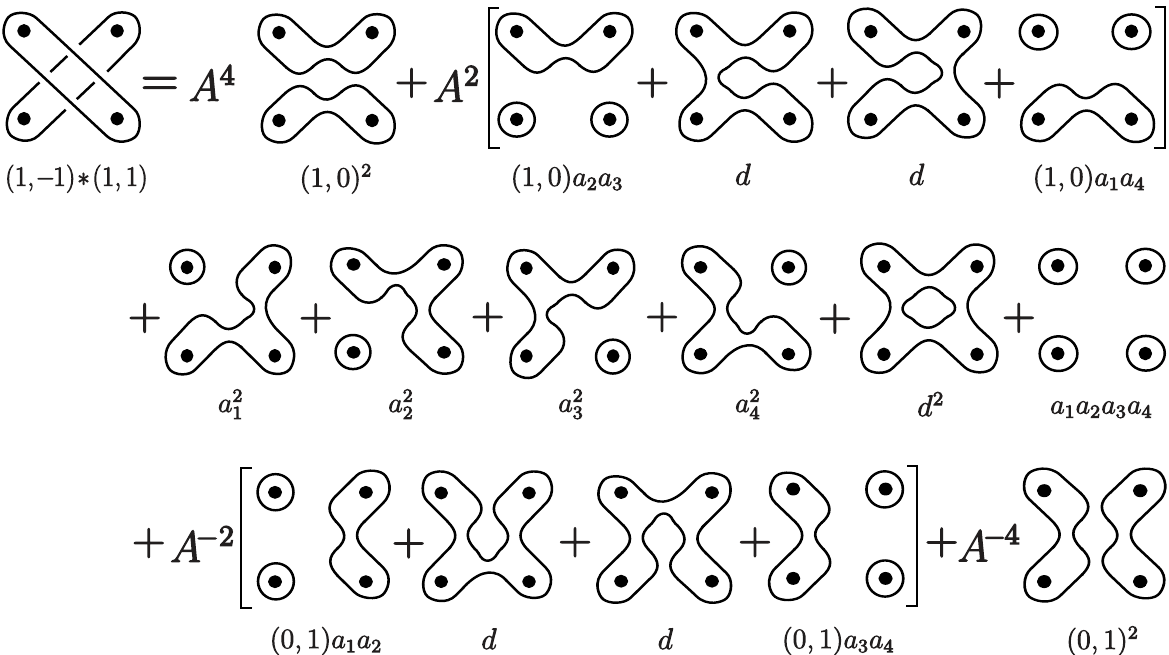}
		\caption{\small{The product $(-1,1)*(1,1)$ in $F_{0,4}$.}} 
		\label{FigF04}
	\end{figure}
	
	From these equations we obtain the following:
	\begin{equation}\label{eq11}
	A^2 (1,0) * (0,1) - A^{-2} (0,1) * (1,0) = (A^4 - A^{-4}) (1,1)+ (A^2 - A^{-2}) (a_1a_3+a_2a_4),
	\end{equation} 
	\begin{equation}\label{eq22}
	A^2 (0,1) * (1,1) - A^{-2} (1,1) * (0,1) = (A^4 - A^{-4}) (1,0)+ (A^2 - A^{-2}) (a_1a_4+a_2a_3),
	\end{equation} 
	\begin{equation}\label{eq33}
	A^2 (1,1) * (1,0) - A^{-2} (1,0) * (1,1) = (A^4 - A^{-4}) (0,1)+ (A^2 - A^{-2}) (a_1a_2+a_3a_4).
	\end{equation} 
	
	 The {\it long relation} for $F_{0,4}$ is
	\begin{eqnarray}\label{eq44}
	A^4 \left[(1,1)^2 + A^2 (1,0)^2\right] + A^{-4} (0,1)^2 + A^2 \left[(a_1a_4+a_2a_3)(1,0) + (a_1a_3+a_2a_4) (1,1)\right] +  \\
	A^{-2}(a_1a_2+a_3a_4)(0,1) + \left[a_1^2+a_2^2+a_3^2+a_4^2 + a_1a_2a_3a_4 + (A^2-A^{-2})^2\right] -2(A^4+A^{-4}) - \nonumber \\  A^2 (1,0) * (0,1) * (1,1) = 0. \nonumber
	\end{eqnarray}
	
	\begin{theorem}\cite{BP}\label{TeoF04}
		As an algebra, 
		$\mathcal{S}_{2,\infty}(F_{0,4} \times I) = k[a_1,a_2,a_3,a_4] \left\{(1,0), (0,1), (1,1)\right\} / (\ref{eq11}), (\ref{eq22}), (\ref{eq33}), (\ref{eq44}).$
	\end{theorem}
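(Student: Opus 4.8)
The plan is to realize the claimed presentation as a concrete isomorphism and to test it against the free basis of Theorem \ref{TeoBasisFxI}. Set $K=k[a_1,a_2,a_3,a_4]$ and write $R$ for the abstract $K$-algebra $K\{(1,0),(0,1),(1,1)\}$ modulo relations (\ref{eq11})--(\ref{eq44}). Since the boundary curves $a_i$ are central (Theorem \ref{TeoCenter}), there is a $K$-algebra homomorphism $\Phi\colon R\to\mathcal{S}_{2,\infty}(F_{0,4}\times I)$ sending each symbol to the curve of the same name. The first task is to check that $\Phi$ is well defined, i.e.\ that relations (\ref{eq11})--(\ref{eq44}) actually hold in the skein algebra. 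I would do this by resolving every crossing in the products $(1,0)*(0,1)$, $(0,1)*(1,1)$, $(1,1)*(1,0)$ and in the triple product $(1,0)*(0,1)*(1,1)$, exactly as displayed in Figures \ref{FigF10} and \ref{FigF04}; reading off the resulting multicurves and collecting terms yields (\ref{eq11})--(\ref{eq33}) and the long relation (\ref{eq44}). This is a finite, purely diagrammatic verification.

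Next I would prove that $\Phi$ is surjective. Essential simple closed curves in $F_{0,4}$ are indexed by slopes $p/q\in\mathbb{Q}\cup\{\infty\}$ arranged on the Farey graph, with $(1,0)$, $(1,1)$, $(0,1)$ sitting at the slopes $0,1,\infty$ of a single Farey triangle; moreover any two essential curves of different slope must intersect, so every basis multicurve is a union of parallel copies of one slope together with boundary-parallel curves. Relations (\ref{eq11})--(\ref{eq33}) are precisely the product-to-sum identities for pairs of Farey-neighbouring slopes, isolating their mediant in terms of the two neighbours plus boundary terms. Inducting along the Farey graph from the base triangle then writes every slope $(q,p)$---and hence, by taking powers, every basis multicurve---as a $K$-polynomial in the three generators, so $\Phi$ is onto. (This also recovers the generator count of Theorem \ref{TeoNumberGen}.)

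The decisive step is injectivity, for which I would exhibit a $K$-spanning set $B\subset R$ whose image under $\Phi$ is the free basis of Theorem \ref{TeoBasisFxI}. Choose for each slope $p/q$ and each $m\ge 0$ a word $w_{p/q,m}$ in the generators representing $m$ parallel slope-$p/q$ curves, and let $B=\{w_{p/q,m}\}$. To see that $B$ spans $R$, reduce an arbitrary monomial to a $K$-combination of elements of $B$: the commutation-type relations (\ref{eq11})--(\ref{eq33}) reorder adjacent factors up to strictly simpler correction terms, while the long relation (\ref{eq44}) removes the mixed triple product and lowers total degree, so an induction on a suitable complexity measure terminates. To see that $\Phi(B)$ is linearly independent, filter the multicurve basis by the Dehn coordinates $(d,|n|)$ (equivalently, by geometric intersection number with the two axes) and check that $\Phi(w_{p/q,m})$ has leading term equal to the multicurve of $m$ parallel slope-$p/q$ curves, distinct for distinct $(p/q,m)$. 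Distinct leading terms force independence, so $B$ is a $K$-basis and $\Phi$ is an isomorphism.

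I expect the completeness of the reduction in the injectivity step to be the principal obstacle: one must be sure that relations (\ref{eq11})--(\ref{eq44}) suffice to straighten \emph{every} word into pure-power form without producing any relation among the $w_{p/q,m}$ beyond those forced geometrically. The subtle point is the long relation (\ref{eq44}), as it is the only one that both introduces the boundary polynomials $a_i$ and decreases the combined degree of the three slopes; one must verify that the $(d,|n|)$-filtration is multiplicative on leading terms and is strictly decreased by each application of (\ref{eq44}), so that the reduction algorithm is confluent and terminating. Once this is in place, the comparison with the free basis of Theorem \ref{TeoBasisFxI} is exact rather than merely surjective, and the theorem follows. As a cross-check, the same presentation can in principle be transported from the torus case of Theorem \ref{TeoF11F10} through the double branched cover $F_{1,0}\to S^2$ of Figure \ref{abcdef}, under which slope-$p/q$ curves correspond to slope-$p/q$ curves.
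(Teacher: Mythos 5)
The paper does not prove Theorem \ref{TeoF04}: it is quoted from \cite{BP}, and the text around it only records the diagrammatic verification of relations (\ref{eq11})--(\ref{eq33}) (Figure \ref{FigF10}) and states the long relation (\ref{eq44}). Measured against the cited source, your outline is the standard (and essentially the same) strategy: build the epimorphism $\Phi$ from the presented algebra, check the four relations by resolving crossings, and then identify a $K$-spanning set of the abstract algebra whose image is triangular with respect to the free multicurve basis of Theorem \ref{TeoBasisFxI}, so that spanning upstairs plus independence downstairs forces injectivity. Your observation that every basis multicurve of $F_{0,4}$ is a power of a single slope curve times central boundary curves is correct and is what makes the skein algebra a free $K$-module on $\{(d,n)^m\}$, as your argument needs.

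Three points deserve correction or sharpening. First, relations (\ref{eq11})--(\ref{eq33}) are the product-to-sum identities only for the three base slopes $0,1,\infty$; for a general Farey edge the analogous identity (Lemma \ref{det1}) is proved in the \emph{skein algebra} via the mapping class group action, which does not obviously act on the abstract quotient $R$. This is harmless for surjectivity (there you work downstairs, where diagrams are available), but for the spanning step in $R$ you may only use the four listed relations, so the Farey induction must be re-derived algebraically --- this is the real content of the \cite{BP} proof, and your outline defers exactly this. Second, your pure-power spanning set $B=\{w_{p/q,m}\}$ makes the reduction harder than necessary: relations (\ref{eq11})--(\ref{eq33}) reorder words into monomials $x^ay^bz^c$ (with $x=(1,0)$, $y=(0,1)$, $z=(1,1)$) modulo shorter terms, and (\ref{eq44}) eliminates any monomial with $\min(a,b,c)\geq 1$; the ordered monomials with $\min(a,b,c)=0$ already form a spanning set whose images are triangular against the multicurve basis (one checks the counts agree: $3N$ such monomials of degree $N$ versus $3N$ multicurves of the corresponding filtration level), so no rewriting into pure slope powers is required. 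Third, confluence is not needed at all --- only termination of the reduction, since uniqueness of normal forms follows from linear independence of the images in the free module; and the proposed cross-check through the double branched cover can only be heuristic, since by Corollary \ref{2.8} the torus presentation matches only modulo $R_{0,1},R_{1,0},R_{1,1},y$ and after substituting $A\mapsto A^2$, so it cannot transport a proof of the presentation.
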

	
	\begin{corollary}\label{2.8}\cite{BP}
		Let $R_{0,1}= a_1a_2+a_3a_4$, $R_{1,0}=a_1a_4+a_2a_3$, $R_{1,1}=a_1a_3+a_2a_4$ and $y = a_1^2 +a_2^2 +a_3^2 +a_4^2 +a_1a_2a_3a_4 + (A^{2} - A^{-2})^2$. 
		Then, modulo $R_{0,1}, R_{1,0}, R_{1,1}$ and $y$, the relations associated to the KBSA of $F_{1,0} \times I$ and $F_{0,4} \times I$  coincide upto switching $A$ and $A^2$ and keeping the slopes of the curves unchanged.
	\end{corollary}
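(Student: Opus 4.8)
The plan is to prove the statement by a direct comparison of the two presentations, using Theorems \ref{TeoF11F10} and \ref{TeoF04}. Concretely, I would impose $R_{0,1}=R_{1,0}=R_{1,1}=0$ and $y=0$ in the relations (\ref{eq11})--(\ref{eq44}) presenting $\mathcal{S}_{2,\infty}(F_{0,4}\times I)$, and verify that the resulting identities are exactly the images of the relations (\ref{eq1})--(\ref{eq4}) presenting $\mathcal{S}_{2,\infty}(F_{1,0}\times I)$ under the replacement $A\mapsto A^2$, with the curve labels $(1,0),(0,1),(1,1)$ left untouched. Since the $a_i$ are central (Theorem \ref{TeoCenter}), working modulo the four listed elements is simply passing to the quotient by the ideal they generate, so each substitution is legitimate inside the algebra.

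First I would dispose of the three ``short'' relations. Each of (\ref{eq11}), (\ref{eq22}), (\ref{eq33}) contains a single boundary monomial, namely $R_{1,1}=a_1a_3+a_2a_4$, $R_{1,0}=a_1a_4+a_2a_3$, and $R_{0,1}=a_1a_2+a_3a_4$ respectively, each carrying the coefficient $(A^2-A^{-2})$. Setting these monomials to zero removes those terms and leaves, for instance from (\ref{eq11}),
\[
A^2\,(1,0)*(0,1)-A^{-2}\,(0,1)*(1,0)=(A^4-A^{-4})\,(1,1).
\]
I would then observe that this is precisely (\ref{eq1}) after $A\mapsto A^2$: the coefficients $A$, $A^{-1}$, and $(A^2-A^{-2})$ become $A^2$, $A^{-2}$, and $(A^4-A^{-4})$, while the three curves are unchanged. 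The identical check applied to (\ref{eq22}) against (\ref{eq2}) and to (\ref{eq33}) against (\ref{eq3}) settles the short relations; this part is an immediate inspection.

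The substantive step is the long relation. In (\ref{eq44}) the boundary data enter through the products $R_{1,0}(1,0)$, $R_{1,1}(1,1)$, $R_{0,1}(0,1)$ and through the bracketed scalar $a_1^2+a_2^2+a_3^2+a_4^2+a_1a_2a_3a_4+(A^2-A^{-2})^2$, which is exactly $y$. Setting the three $R$'s and $y$ to zero collapses (\ref{eq44}) to a relation involving only $(1,1)^2$, $(1,0)^2$, $(0,1)^2$, the constant $-2(A^4+A^{-4})$, and the cubic $-A^2\,(1,0)*(0,1)*(1,1)$. The remaining work is to match this term by term against the image of (\ref{eq4}) under $A\mapsto A^2$, in which the constant $-2(A^2+A^{-2})$ becomes $-2(A^4+A^{-4})$, the cubic coefficient $-A$ becomes $-A^2$, and the quadratic coefficients on $(1,1)^2$, $(1,0)^2$, $(0,1)^2$ become $A^4$, $A^4$, $A^{-4}$.

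I expect the coefficient bookkeeping in this last comparison—specifically the quadratic term in $(1,0)^2$—to be the main obstacle, as it is the only place where the two presentations are not manifestly identical after the substitution: one must reconcile the internal scaling in $A^4\bigl[(1,1)^2+A^2(1,0)^2\bigr]$ with the symmetric torus expression $A^4\bigl[(1,1)^2+(1,0)^2\bigr]$. This is the step that genuinely uses the precise form of the long relations rather than a formal substitution, so it is where I would concentrate the careful verification, tracking the powers of $A$ produced by the double branched cover $F_{1,0}\to S^2$ that underlies the $A\mapsto A^2$ correspondence in the first place.
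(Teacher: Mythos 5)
Your overall strategy---pass to the quotient by the central ideal generated by $R_{0,1}, R_{1,0}, R_{1,1}, y$ and compare the two presentations relation by relation under $A\mapsto A^2$---is exactly the argument the paper intends, and your treatment of the short relations (\ref{eq1})--(\ref{eq3}) against (\ref{eq11})--(\ref{eq33}) is complete and correct. However, your proof does not close. You correctly isolate the coefficient of $(1,0)^2$ in the long relation as the one point where formal substitution fails, but you then stop, hoping the mismatch will be reconciled by ``tracking the powers of $A$ produced by the double branched cover.'' No such reconciliation is possible: modulo the ideal, relation (\ref{eq44}) \emph{as printed} reads
$$A^4(1,1)^2 + A^6(1,0)^2 + A^{-4}(0,1)^2 - 2(A^4+A^{-4}) - A^2\,(1,0)*(0,1)*(1,1) = 0,$$
whereas the image of (\ref{eq4}) under $A\mapsto A^2$ carries $A^4(1,0)^2$; the difference $(A^6-A^4)(1,0)^2$ does not lie in the ideal generated by the four central elements, so the two relations are genuinely distinct as printed. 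The mechanism you invoke cannot produce the missing factor, and your proposal never decides whether the discrepancy is real (which would refute the corollary) or an error in the displayed relation.

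The missing step is to rederive the long relation from the initial data, which shows that the factor $A^2$ inside the bracket $A^4\bigl[(1,1)^2 + A^2(1,0)^2\bigr]$ of (\ref{eq44}) is a typographical error. By Lemma \ref{det1}(ii), $(1,0)*(0,1)=A^2(1,1)+A^{-2}(1,-1)+R_{1,1}$, and by Lemma \ref{det21}, $(1,-1)*(1,1)=A^{4}(2,0)_T + A^{-4}(0,2)_T + y + A^{2}R_{1,0}(1,0) + A^{-2}R_{0,1}(0,1)$. Hence
\begin{align*}
A^2\,(1,0)*(0,1)*(1,1) &= A^4(1,1)^2 + A^2R_{1,1}(1,1) + A^{4}(2,0)_T + A^{-4}(0,2)_T + y + A^{2}R_{1,0}(1,0) + A^{-2}R_{0,1}(0,1)\\
&= A^4\bigl[(1,1)^2+(1,0)^2\bigr] + A^{-4}(0,1)^2 - 2(A^4+A^{-4}) + y\\
&\quad + A^2\bigl[R_{1,0}(1,0)+R_{1,1}(1,1)\bigr] + A^{-2}R_{0,1}(0,1),
\end{align*}
using $(2,0)_T=(1,0)^2-2$ and $(0,2)_T=(0,1)^2-2$. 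The coefficient of $(1,0)^2$ is therefore $A^4$, not $A^6$, and modulo $R_{0,1}, R_{1,0}, R_{1,1}, y$ the corrected long relation is precisely (\ref{eq4}) with $A$ replaced by $A^2$, completing the comparison. With this computation included your argument is complete; without it, the decisive claim is unverified and, taken at face value against the printed relation, false.
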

	
	 Corollary \ref{2.8} suggests that when looking for formulas for the multiplication of two curves in $F_{0,4}$ based on the \textit{product-to-sum} formula described in \cite{FG} (see Theorem \ref{TeoProdtoSum}), one should work with a subring of the center of  $\mathcal{S}_{2,\infty}(F_{0,4} \times I)$ generated by the 
	variables $R_{0,1}, R_{1,0}, R_{1,1}$ and $y$. We denote this subring by $K_{0}$.

	\subsection{Chebyshev polynomials and the product-to-sum formula by Frohman and Gelca}
	
	In \cite{FG} Frohman and Gelca introduced a formula for the multiplication of two elements in the KBSA of the torus. Their formula is expressed in terms of Chebyshev polynomials of the first kind. 
	
	The Chebyshev polynomials of the first kind are defined by the recurrence relation $$ T_{n}(x) = xT_{n-1}(x) - T_{n-2}(x)$$ where  $T_{0}(x) = 2$, $T_{1}(x) = x$ are the initial conditions. 
	
	Chebyshev polynomials of the second kind, $S_n(x)$, are defined by the same recurrence relation together with the initial conditions $S_{0}(x) = 1$ and $S_1(x) = x$.
	
	The following straighforward properties will be useful in the next sections:
	
	\begin{proposition}\cite{Lic, Chebyshev}\label{property_cheby} \
		\begin{enumerate}
			\item If we write $x = a + a^{-1}$, then $T_{n}(x)  = a^{n} + a^{-n}$ and $S_{n}(x) = \frac{a^{n+1} - a^{-n-1}}{a - a^{-1}} $.
			\item  The product-to-sum formula for Chebyshev polynomials is given by: $$T_m(x)T_n(x)= T_{m+n}(x) + T_{|m-n|}(x).$$
			\item $T_n(x) = S_n(x) - S_{n-2}(x)$.
			\item $S_n(x) = \bigg( \displaystyle \sum_{i=0}^{\lfloor (n-1)/2\rfloor}T_{n-2i}\bigg)+ \alpha(n)$, where $\alpha(n) = 0$ if $n$ is odd and $1$ otherwise.
		\end{enumerate}
	\end{proposition}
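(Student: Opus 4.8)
The plan is to prove property (1) first by induction on $n$ directly from the defining recurrences, and then to obtain properties (2), (3), and (4) as purely algebraic consequences of the closed forms furnished by (1). Throughout, one works in the ring $\mathbb{Z}[a^{\pm 1}]$ after the substitution $x = a + a^{-1}$.

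For (1), I would verify the two base cases for each family: for the first kind, $T_0(x) = 2 = a^0 + a^{-0}$ and $T_1(x) = x = a + a^{-1}$; for the second kind, $S_0(x) = 1 = \tfrac{a - a^{-1}}{a - a^{-1}}$ and $S_1(x) = x = \tfrac{a^2 - a^{-2}}{a - a^{-1}}$. For the inductive step, assuming the formulas hold for indices $n-1$ and $n-2$, I would substitute into $T_n = xT_{n-1} - T_{n-2}$; expanding $(a + a^{-1})(a^{n-1} + a^{-(n-1)})$ and cancelling the $a^{n-2}$ and $a^{-(n-2)}$ terms against $T_{n-2}$ leaves exactly $a^n + a^{-n}$. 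The identical computation handles $S_n$, where the common denominator $a - a^{-1}$ carries through unchanged.

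With (1) in hand, property (2) is immediate: $T_m T_n = (a^m + a^{-m})(a^n + a^{-n}) = (a^{m+n} + a^{-(m+n)}) + (a^{m-n} + a^{-(m-n)})$, and the second summand is $T_{m-n} = T_{|m-n|}$ since the closed form is symmetric under $k \mapsto -k$. For (3), I would subtract the closed forms, $S_n - S_{n-2} = \tfrac{(a^{n+1} - a^{-(n+1)}) - (a^{n-1} - a^{-(n-1)})}{a - a^{-1}}$, and factor the numerator as $(a^n + a^{-n})(a - a^{-1})$; dividing by $a - a^{-1}$ yields $T_n$.

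Finally, property (4) I would derive by telescoping. Rewriting (3) as $T_{n-2i} = S_{n-2i} - S_{n-2i-2}$ and summing over $i$ from $0$ to $k := \lfloor (n-1)/2 \rfloor$ collapses to $S_n - S_{n-2k-2}$. The only real care is the parity bookkeeping at the lower end: when $n$ is odd one has $n - 2k - 2 = -1$, so the tail term is $S_{-1} = 0$ (consistent with the closed form, whose numerator vanishes), giving $S_n$ and $\alpha(n) = 0$; when $n$ is even one has $n - 2k - 2 = 0$, so the tail term is $S_0 = 1$, giving $S_n - 1$ and hence $\alpha(n) = 1$. This case split, together with the convention $S_{-1} = 0$, is the only delicate point; everything else is routine manipulation of the two closed forms.
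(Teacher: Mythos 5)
Your proposal is correct, and since the paper offers no proof of this proposition at all --- it is quoted from the literature with the citation \cite{Lic, Chebyshev} --- your argument (inducting on the defining recurrence to get the closed forms in (1), then deriving (2)--(4) by routine algebra in $\mathbb{Z}[a^{\pm 1}]$, with the conventions $S_{-1}=0$ and $S_0=1$ resolving the parity split in the telescoping for (4)) is precisely the standard route implicit in those references. All edge cases are handled soundly, including $m=n$ in (2), where $T_0=2=a^0+a^{-0}$ makes the symmetric form of $T_{|m-n|}$ come out right.
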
\

	Given $(d,n),$ a multicurve in $F_{1,0} \times I$ with $r = gcd(d,n)$, let
	$$(d,n)_{T}=T_r((d/r,n/r)) \quad \mbox{ and } \quad (d,n)_{S}=S_{r}((d/r,n/r)).$$ 
	The above expressions are elements in $S_{2, \infty}(F_{1,0} \times I)$ and denote formal linear combinations of muticurves on the torus where the variable of Chebyshev polynomials is substituted by the curves on the torus.
	
	\begin{example} \
		\begin{enumerate} 
			\item $(8,2)_T = T_2((4,1)) = (4,1)^2-2 = (8,2)-2,$ where $2$ denotes $2(0,0)$. 
			\item $(3,6)_S = S_3((1,2)) = (1,2) * [(1,2)^2-1] - (1,2) = (3,6) - 2 (1,2)$. 
		\end{enumerate}
	
	\end{example}
	
	\begin{theorem}\cite{FG} (Product-to-sum formula)\label{TeoProdtoSum}
		Given two multicurves $(d_1,n_1), (d_2,n_2) \in S_{2, \infty}(F_{1,0} \times I)$, 
		$$(d_1,n_1)_{T}*(d_2,n_2)_{T}=A^{d_1n_2-n_1d_2}(d_1+d_2,n_1+n_2)_{T}+A^{-(d_1n_2-n_1d_2)}(d_1-d_2,n_1- n_2)_{T}.$$
	\end{theorem}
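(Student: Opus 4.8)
The plan is to realize $\mathcal{S}_{2,\infty}(F_{1,0}\times I)$ as a subalgebra of the \emph{quantum torus} and to reduce the product-to-sum formula to an elementary monomial computation there. Let $\mathcal{A}$ be the $\mathbb{Z}[A^{\pm1}]$-algebra generated by invertible elements $X,Y$ subject to $XY = A^{2}YX$, and let $\sigma$ be the involution $X\mapsto X^{-1}$, $Y\mapsto Y^{-1}$. For each pair $(p,q)$ set
$$
e_{p,q}=A^{-pq}\bigl(X^{p}Y^{q}+X^{-p}Y^{-q}\bigr),
$$
so that $e_{0,0}=2$ and $e_{-p,-q}=e_{p,q}$; these symmetrized monomials span the $\sigma$-invariant subalgebra $\mathcal{A}^{\sigma}$. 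The guiding idea is that the assignment $(1,0)\mapsto e_{1,0}$, $(0,1)\mapsto e_{0,1}$, $(1,1)\mapsto e_{1,1}$ should extend to an injective algebra homomorphism $\Phi\colon \mathcal{S}_{2,\infty}(F_{1,0}\times I)\to\mathcal{A}^{\sigma}$ carrying each threaded multicurve $(d,n)_{T}$ to $e_{d,n}$, after which the theorem becomes a transported identity.

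First I would carry out the key monomial computation. Writing $M=A^{-p_1q_1}X^{p_1}Y^{q_1}$, one checks $M^{-1}=A^{-p_1q_1}X^{-p_1}Y^{-q_1}$, so $e_{p_1,q_1}=M+M^{-1}$; expanding the product $e_{p_1,q_1}e_{p_2,q_2}$ into four monomials and repeatedly applying $Y^{q}X^{p}=A^{-2pq}X^{p}Y^{q}$ to normalize each term, the cross terms regroup by $\sigma$ into two symmetrized monomials and the accumulated powers of $A$ collapse to
$$
e_{p_1,q_1}e_{p_2,q_2}=A^{\,p_1q_2-q_1p_2}\,e_{p_1+p_2,\,q_1+q_2}+A^{-(p_1q_2-q_1p_2)}\,e_{p_1-p_2,\,q_1-q_2}.
$$
This is exactly the asserted formula with $(p,q)=(d,n)$, since $p_1q_2-q_1p_2=d_1n_2-n_1d_2$.

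Next I would establish that $\Phi$ is well defined and respects the Chebyshev threading. Well-definedness amounts to checking that the images of the generators satisfy relations (\ref{eq1})--(\ref{eq4}); each follows by specializing the displayed monomial identity to small indices (for instance $A\,e_{1,0}e_{0,1}-A^{-1}e_{0,1}e_{1,0}=(A^2-A^{-2})e_{1,1}$ reproduces (\ref{eq1}), using $e_{-1,1}=e_{1,-1}$), with the long relation (\ref{eq4}) a longer but mechanical instance of the same computation. The threading correspondence is the heart of the argument: if $(d,n)$ has $r=\gcd(d,n)$ and $(d/r,n/r)\mapsto e_{d/r,n/r}=M+M^{-1}$ with $M=A^{-(d/r)(n/r)}X^{d/r}Y^{n/r}$, then a direct induction using $(X^{a}Y^{b})^{k}=A^{-abk(k-1)}X^{ka}Y^{kb}$ gives $M^{r}=A^{-dn}X^{d}Y^{n}$, whence by Proposition \ref{property_cheby}(1), applied with $a=M$,
$$
\Phi\bigl((d,n)_{T}\bigr)=T_{r}(M+M^{-1})=M^{r}+M^{-r}=e_{d,n}.
$$

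Finally I would conclude by injectivity. The multicurves $\{(d,n)\}$ form a basis of the skein module by Theorem \ref{TeoBasisFxI}, and passing to $\{(d,n)_{T}\}$ is an invertible change of basis (unitriangular in the number of parallel components, as the example $(8,2)_{T}=(8,2)-2$ already illustrates), so the threaded multicurves are again a basis; since their images $\{e_{d,n}\}$ are manifestly linearly independent in $\mathcal{A}^{\sigma}$, the homomorphism $\Phi$ is injective. Applying $\Phi$ to both sides of the claimed identity and invoking the monomial formula shows the two sides have equal image, and injectivity upgrades this to equality in $\mathcal{S}_{2,\infty}(F_{1,0}\times I)$. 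I expect the main obstacle to be the verification that $\Phi$ is well defined --- that is, that the long relation (\ref{eq4}) holds among the symmetrized monomials --- together with the careful bookkeeping of $A$-powers in the threading step; the monomial product-to-sum identity itself, while the engine of the proof, is entirely elementary.
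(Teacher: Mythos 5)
The paper does not prove this theorem itself --- it quotes it from \cite{FG} --- so your proposal must stand on its own, and it has a genuine gap at its central step. Your monomial computation in the quantum torus is correct (one can check $e_{p_1,q_1}e_{p_2,q_2}=A^{p_1q_2-q_1p_2}e_{p_1+p_2,q_1+q_2}+A^{-(p_1q_2-q_1p_2)}e_{p_1-p_2,q_1-q_2}$ exactly as you say), the verification of relations (\ref{eq1})--(\ref{eq4}) for the images of the three generators does go through, and the Chebyshev step $T_r(M+M^{-1})=M^r+M^{-r}$ with $M^r=A^{-dn}X^dY^n$ is fine. What is never proved is the assertion that $\Phi$ carries an arbitrary \emph{primitive} curve $(d,n)$ to $e_{d,n}$. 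The presentation in Theorem \ref{TeoF11F10}(2) defines $\Phi$ only on the generators $(1,0),(0,1),(1,1)$; the basis element $(d,n)$ of Theorem \ref{TeoBasisFxI} is some specific noncommutative polynomial in these generators, and nothing in your argument identifies $\Phi$ of that polynomial with $e_{d,n}$. Already for $(2,1)$ you would need the skein identity $(1,0)*(1,1)=A(2,1)+A^{-1}(0,1)$ --- a geometric resolution of a single crossing, not a consequence of the presentation --- before you can conclude $\Phi((2,1))=e_{2,1}$. Your ``threading'' paragraph only reduces the non-primitive case to the primitive one; the primitive case itself is exactly where the content of the theorem lives, and it is assumed rather than established.

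The gap is repairable, but the repair is the real work: one proves the determinant-$\pm1$ product formula $(d_1,n_1)*(d_2,n_2)=A(d_1+d_2,n_1+n_2)+A^{-1}(d_1-d_2,n_1-n_2)$ for all pairs intersecting once, by transporting the computed product $(1,0)*(0,1)$ (Figure \ref{FigF11}) via the $SL(2,\mathbb{Z})$ mapping class group action, and then runs a Farey-mediant (Euclidean) induction on $d+|n|$ to show $\Phi((d,n))=e_{d,n}$ for every primitive slope, each inductive step using the monomial identity on the quantum-torus side. Once that is in place, your injectivity argument (basis $(d,n)_T$ mapping to the manifestly independent $e_{d,n}$, after the unitriangular change of basis) is sound and the theorem transfers as you describe. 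Note that this inverts the logic of Frohman and Gelca: in \cite{FG} the product-to-sum formula is proved first, by induction inside the skein algebra using the Chebyshev recursion and the $SL(2,\mathbb{Z})$ symmetry, and the embedding into the $\sigma$-invariant part of the noncommutative torus is deduced as a corollary. Your route can be made to work, and the quantum torus does streamline the bookkeeping of $A$-powers, but the induction you would have to add is essentially the same induction FG perform --- so the proposal as written is a correct frame around a missing core, not a complete alternative proof.
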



\section{Product-to-sum formula for some families of curves in the thickened T-shirt} \label{SecFamilies}

\subsection{Basic formulas and Initial Data}\

Given the product of two multicurves $(d_1,n_1)*(d_2,n_2)$, we associate to it the {\it determinant} $(d_1n_2-d_2n_1)$. In this subsection we provide some basic computation results for the multiplication of two multicurves when the absolute value of their associated determinant is either $0$, $1$ or $2$. These will serve as the initial data for the algorithm presented in Section \ref{SecAlgorithm}.
 
As in the case of the torus, we denote by $(d,n)_T$ the multicurve $(d,n)$ decorated by the Chebyshev polynomial $T_{gcd(d,n)}\left(\frac{d}{gcd(d,n)}, \frac{n}{gcd(d,n)}\right)$. Consider the product $(d_{1},n_{1})*(d_{2},n_{2})$ with determinant $d_{1}n_{2} - d_{2}n_{1} = 0$. We have $(d_{1},n_{1})*(d_{2},n_{2}) = (d_{1} + d_{2}, n_{1} + n_{2})$. From the product-to-sum formula for Chebyshev polynomials, Proposition \ref{property_cheby} (2), we have $(d_{1},n_{1})_{T}*(d_{2},n_{2})_{T} = (d_{1} + d_{2}, n_{1} + n_{2})_{T} + (d_{1} - d_{2}, n_{1} - n_{2})_{T}$. 

\begin{lemma}\label{det1}
Consider two curves in $F_{0,4},$ $(d_1,n_1)=\frac{n_1}{d_1}$ and $(d_2,n_2)=\frac{n_2}{d_2},$ satisfying $d_1n_2-d_2n_1=1$. Then
$$(d_1,n_1)*(d_2,n_2) = A^2(d_{1}+d_{2},n_{1}+n_{2}) + A^{-2}(d_{1}-d_{2},n_{1}-n_{2}) +R_{d_1+d_2,n_1+n_2},$$
where the indices of $R$ are taken modulo $2$. 

In particular we have: 

\begin{enumerate}
\item[(i)]$(1,0)*(1,1)= A^2(2,1) + A^{-2}(0,1) + a_1a_2+a_3a_4 = A^2(2,1) + A^{-2}(0,1) + R_{0,1},$

\item [(ii)]$(1,0)*(0,1)= A^2(1,1) + A^{-2}(-1,1) + a_1a_3+a_2a_4 = A^2(1,1) + A^{-2}(-1,1) + R_{1,1},$
\item [(iii)]
$(1,1)*(0,1)= A^2(1,2) + A^{-2}(1,0) + a_1a_4+a_2a_3 = A^2(1,2) + A^{-2}(1,0) + R_{1,0}.$	

\end{enumerate}
\end{lemma}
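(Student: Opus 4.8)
\emph{The plan is to} reduce the general determinant-one product to the three explicit base cases (i)--(iii) by exploiting the symmetry of $F_{0,4}$ coming from the double branched cover $F_{1,0}\to S^2$ of Figure \ref{abcdef}. The three base products are read off directly from the diagrammatic computations (as in Figures \ref{FigF10} and \ref{FigF04}); note that each has determinant $1$, exhibits the shape $A^2(\cdot)+A^{-2}(\cdot)$, and carries exactly one central correction, namely $R_{1,1}$, $R_{0,1}$, $R_{1,0}$ respectively, with no occurrence of $y$. Everything else is obtained by transporting one such identity along the mapping class group action, so no further diagram chasing is required.

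The symmetry I would use is the following. The hyperelliptic involution on $F_{1,0}$ is $-I$, which is central in $SL(2,\mathbb{Z})$; hence every orientation-preserving linear diffeomorphism of the torus descends to the quotient $S^2$ and restricts to an orientation-preserving homeomorphism of $F_{0,4}$. Thus each $M\in SL(2,\mathbb{Z})$ gives a homeomorphism $\phi_M$ of $F_{0,4}$, and, extended by the identity on $I$, it induces a $k$-algebra automorphism $\Phi_M$ of $\mathcal{S}_{2,\infty}(F_{0,4}\times I)$, acting on the extended coefficient ring $K=k[a_1,a_2,a_3,a_4]$ by the induced permutation of the four punctures. Since $\phi_M$ preserves orientation it fixes $A$, and since it preserves the stacking order it is multiplicative: $\Phi_M(L_1*L_2)=\Phi_M(L_1)*\Phi_M(L_2)$. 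On curves it acts by the matrix, $\Phi_M(d,n)=(M\cdot(d,n))$ as an unoriented multicurve, with the convention $(-d,n)=(d,-n)$ absorbing the sign ambiguity of the $PSL(2,\mathbb{Z})$ action, and on the central pairings it acts by $\Phi_M(R_{p,q})=R_{M\cdot(p,q)\bmod 2}$.

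Granting this, the lemma follows at once. Given a pair with $d_1n_2-d_2n_1=1$, set $M=\begin{pmatrix} d_1 & d_2\\ n_1 & n_2\end{pmatrix}\in SL(2,\mathbb{Z})$, so that $\Phi_M(1,0)=(d_1,n_1)$ and $\Phi_M(0,1)=(d_2,n_2)$. Applying $\Phi_M$ to base case (ii), $(1,0)*(0,1)=A^2(1,1)+A^{-2}(-1,1)+R_{1,1}$, and using $M\cdot(1,1)=(d_1+d_2,n_1+n_2)$ together with $M\cdot(-1,1)=(d_2-d_1,n_2-n_1)=(d_1-d_2,n_1-n_2)$ as unoriented curves, yields the two main terms. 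The correction becomes $\Phi_M(R_{1,1})=R_{M\cdot(1,1)\bmod 2}=R_{d_1+d_2,\,n_1+n_2}$ with indices modulo $2$, exactly as claimed. As a consistency check, (i) and (iii) are themselves the images of (ii) under $\left(\begin{smallmatrix}1&1\\0&1\end{smallmatrix}\right)$ and $\left(\begin{smallmatrix}1&0\\1&1\end{smallmatrix}\right)$, and indeed $\left(\begin{smallmatrix}1&1\\0&1\end{smallmatrix}\right)\cdot(1,1)\equiv(0,1)$ and $\left(\begin{smallmatrix}1&0\\1&1\end{smallmatrix}\right)\cdot(1,1)\equiv(1,0)\pmod 2$ reproduce $R_{0,1}$ and $R_{1,0}$. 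The main ($A^{\pm2}$) terms can also be recovered independently from Corollary \ref{2.8} together with the Frohman--Gelca formula (Theorem \ref{TeoProdtoSum}) applied to the two slopes: the determinant-one condition forces $\gcd(d_i,n_i)=1$, so no Chebyshev decoration appears and the exponent $A^{d_1n_2-d_2n_1}=A^1$ becomes $A^2$ after the $A\mapsto A^2$ substitution; this confirms the main terms but leaves the precise correction to be fixed by the symmetry argument.

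The one point that genuinely requires care --- and which I expect to be the main obstacle --- is justifying the transformation rule $\Phi_M(R_{p,q})=R_{M\cdot(p,q)\bmod 2}$. This amounts to matching two $S_3$-actions: the action of $M\bmod 2\in SL(2,\mathbb{Z}/2)\cong S_3$ on the three nonzero slope classes $(0,1),(1,0),(1,1)$, and the permutation that $\phi_M$ induces on the three pairings $\{a_1a_2+a_3a_4,\ a_1a_4+a_2a_3,\ a_1a_3+a_2a_4\}$ of the punctures. Through the branched cover this becomes transparent: the four punctures correspond to the four $2$-torsion points of $F_{1,0}$, i.e.\ the elements of $(\mathbb{Z}/2)^2$, each nonzero direction $v$ partitions them into the pairs $\{x,x+v\}$, and $SL(2,\mathbb{Z})$ permutes directions and pairings simultaneously through the quotient $SL(2,\mathbb{Z}/2)\cong S_3$. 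Fixing the identification of $p_1,p_2,p_3,p_4$ with the $2$-torsion points so that $R_{p,q}$ is the pairing associated with the direction $(p,q)$ --- which one pins down against the three base cases --- makes the rule hold on the nose, and the lemma follows.
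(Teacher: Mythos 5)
Your proposal is correct and takes essentially the same approach as the paper: the paper's proof likewise verifies the single base case $(1,0)*(0,1)$ diagrammatically (Figure \ref{FigF10}) and deduces the general determinant-one formula by applying the $SL(2,\mathbb{Z})$-action on $F_{0,4}$, with the action on the central elements $R_{p,q}$ (indices taken modulo $2$) being exactly what the paper records for the generators $s_1,s_2$ in Section \ref{SecAlgorithm}. Your branched-cover argument matching the two $S_3$-actions merely supplies a justification for the equivariance rule $\Phi_M(R_{p,q})=R_{M\cdot(p,q)\bmod 2}$ that the paper asserts without detailed proof.
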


\begin{proof}
It suffices to	prove one of cases (i), (ii) or (iii). The lemma follows by applying the action of $SL(2,\mathbb{Z})$ on the basic elements in the formulas (in Section \ref{SecAlgorithm} we discuss the effects of this action; compare \cite{FM}). The calculation for case (ii) is presented in Figure \ref{FigF10}.
\end{proof}

\begin{lemma}\label{det21}
Let $(d_1,n_1), (d_2,n_2)$ be two curves in $F_{0,4}$ so that $gcd(d_1,n_1)=gcd(d_2,n_2)=1$ and $d_1n_2-d_2n_1=2$. Then
	$$(d_1,n_1)*(d_2,n_2)= A^{4}(d_{1}+d_{2},n_{1}+n_{2})_T + A^{-4}(d_{1}-d_{2},n_{1}-n_{2})_T + y +$$
	$$A^{2}R_{\frac{d_1+d_2}{2},\frac{n_1+n_2}{2}}((d_1+d_2)/2,(n_1+n_2)/2) + 
	A^{-2}R_{\frac{d_1-d_2}{2}\frac{n_1-n_2}{2}}((d_1-d_2)/2,(n_1-n_2)/2).$$
	
	In particular, we have $(1,-1)*(1,1) = A^{4}(2,0)_T + A^{-4}(0,2)_T + y + A^{2}R_{1,0}(1,0) + A^{-2}R_{0,1}(0,1)$. 
\end{lemma}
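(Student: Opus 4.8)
The plan is to prove the displayed special case $(1,-1)*(1,1)$ first and then to obtain the general statement by transporting it under the mapping class group action, exactly as in the proof of Lemma \ref{det1}. So I would split the argument into a geometric computation of one product and an $SL(2,\mathbb{Z})$-covariance step.

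For the reduction, given coprime pairs $(d_1,n_1),(d_2,n_2)$ with $d_1n_2-d_2n_1=2$, I would first observe that reducing the determinant identity modulo $2$ forces $(d_1,n_1)\equiv (d_2,n_2)\pmod 2$, since over $\mathbb{F}_2$ two nonzero vectors with vanishing determinant must coincide. Hence $M=\begin{pmatrix} d_1 & d_2\\ n_1 & n_2\end{pmatrix}\begin{pmatrix}1&1\\-1&1\end{pmatrix}^{-1}$ is integral with $\det M=1$, and it sends $(1,-1)\mapsto(d_1,n_1)$, $(1,1)\mapsto(d_2,n_2)$. The induced algebra automorphism of $\mathcal{S}_{2,\infty}(F_{0,4}\times I)$ carries the slopes of the base formula by $M$, fixes the symmetric element $y$ and all scalars, and permutes $R_{0,1},R_{1,0},R_{1,1}$ according to the induced permutation of the four punctures. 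Since the half-sum and half-difference slopes $(1,0)$ and $(0,1)$ of the base case are sent to $((d_1+d_2)/2,(n_1+n_2)/2)$ and $((d_1-d_2)/2,(n_1-n_2)/2)$, the $R$-subscripts should land exactly as written; verifying that the puncture permutation induced by $M$ matches this slope bookkeeping is the one genuinely geometric point of the reduction.

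For the base case I would compute $(1,-1)*(1,1)$ by resolving all four crossings of the two curves (their geometric intersection number is $2\,|d_1n_2-d_2n_1|=4$), as depicted for $(-1,1)*(1,1)$ in Figure \ref{FigF04}. The generic resolutions produce the two parallel-copy multicurves $(2,0)$ and $(0,2)$; the resolutions that cut off the annulus between the two parallel strands around a pair of punctures produce boundary-parallel curves whose weights $a_ia_j$ assemble into $R_{1,0}(1,0)$ and $R_{0,1}(0,1)$; and the resolutions creating null-homotopic or once-encircling loops contribute the terms $a_i^2$, $a_1a_2a_3a_4$ and the framing scalar $(A^2-A^{-2})^2$, which combine into $y$ together with the $-2$ constants hidden in the $T_2$-normalizations $(2,0)_T=(2,0)-2$ and $(0,2)_T=(0,2)-2$.

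As an independent algebraic check, and to avoid relying solely on the picture, I would verify the base case by associativity and the long relation. Expanding $(1,0)*(0,1)*(1,1)$ from the left using Lemma \ref{det1}(ii) gives $A^2(2,2)+R_{1,1}(1,1)+A^{-2}\,(1,-1)*(1,1)$, where $(1,1)*(1,1)=(2,2)$. Solving the long relation (\ref{eq44}) for the same triple product, substituting $(1,1)^2=(2,2)$, $(1,0)^2=(2,0)$, $(0,1)^2=(0,2)$ and recognizing $y=a_1^2+a_2^2+a_3^2+a_4^2+a_1a_2a_3a_4+(A^2-A^{-2})^2$, the $(2,2)$- and $R_{1,1}(1,1)$-terms cancel, and after multiplying by $A^2$ one reads off the claimed $X$. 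I expect the main obstacle to be precisely this bookkeeping: correctly tracking the powers of $A$ and the empty-link scalars so that the coefficient of $(2,0)$, the $T$-normalized terms, and the reconstitution of $y$ all match the stated formula — the step where an off-by-$A^2$ slip is easiest to make and hardest to catch.
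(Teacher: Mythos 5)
Your proposal is correct, and its main line is the paper's own: the published proof of Lemma \ref{det21} consists of exactly two sentences, reducing to the single product $(1,-1)*(1,1)$ via the mapping class group action ``as in the previous lemma'' and citing Figure \ref{FigF04} for the diagrammatic computation. What you add is genuine content on two counts. First, you make the reduction honest where the paper glosses over it: for determinant $2$ (unlike determinant $1$) one must check that the connecting matrix is integral, and your mod-$2$ observation --- that primitive vectors with $d_1n_2-d_2n_1=2$ satisfy $(d_1,n_1)\equiv(d_2,n_2)\pmod 2$, so $M$ lies in $SL(2,\mathbb{Z})$ and the half-sum and half-difference slopes (hence the $R$-subscripts, which transform like slopes mod $2$ under the generator action recorded in Section \ref{SecAlgorithm}) are well defined --- is precisely the missing justification. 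Second, your long-relation check is not merely a check: given Lemma \ref{det1}(ii), the identity $(1,1)*(1,1)=(2,2)$, associativity, and relation (\ref{eq44}) quoted from \cite{BP}, it is a complete alternative proof of the base case that does not rely on reading resolutions off the figure, which is more than the paper offers.

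One concrete caution: carried out against (\ref{eq44}) exactly as printed, your check fails by precisely the off-by-$A^2$ slip you predicted --- but the slip is in the paper, not in your plan. The printed term $A^4\left[(1,1)^2+A^2(1,0)^2\right]$ should read $A^4\left[(1,1)^2+(1,0)^2\right]$, as Corollary \ref{2.8} confirms (substitute $A\mapsto A^2$ in the torus relation (\ref{eq4})). With that correction, equating $A^2(1,0)*(0,1)*(1,1)=A^4(2,2)+(1,-1)*(1,1)+A^2R_{1,1}(1,1)$ with the right-hand side of (\ref{eq44}) cancels the $(2,2)$- and $R_{1,1}(1,1)$-terms, and $A^4(1,0)^2-2A^4=A^4(2,0)_T$, $A^{-4}(0,1)^2-2A^{-4}=A^{-4}(0,2)_T$ reconstitute the $T$-normalized terms, leaving exactly $y+A^2R_{1,0}(1,0)+A^{-2}R_{0,1}(0,1)$, in agreement with the stated formula.
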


\begin{proof}
Similarly, as in the previous lemma, it suffices to find the formula for $(1,-1)*(1,1)$. Calculations for this case are shown in Figure \ref{FigF04}.
\end{proof}	

\begin{lemma}\label{det22}
Let $(d_1,n_1), (d_2,n_2)$ be two multicurves in $F_{0,4}$ so that $d_1n_2-d_2n_1=2$. \
	
\begin{enumerate}
	\item If $gcd(d_1,n_1)=2$, then  $$(d_1,n_1)_T*(d_2,n_2)= A^{4}(d_{1}+d_{2},n_{1}+n_{2}) + A^{-4}(d_{1}-d_{2},n_{1}-n_{2}) +$$ $$(d_{1}/2,n_{1}/2)R_{d_1/2 +d_2,n_1/2+n_2} + (A^2+A^{-2})R_{d_2,n_2}.$$ 
	
	\item If $gcd(d_2,n_2)=2$, then
	$$(d_1,n_1)*(d_2,n_2)_T= A^{4}(d_{1}+d_{2},n_{1}+n_{2}) + A^{-4}(d_{1}-d_{2},n_{1}-n_{2}) + $$
	$$R_{d_1 +d_2/2,n_1+n_2/2}(d_{2}/2,n_{2}/2) + (A^2+A^{-2})R_{d_1,n_1}.$$
\end{enumerate}
\end{lemma}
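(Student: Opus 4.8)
The plan is to reduce both parts to the determinant‑$1$ formula of Lemma~\ref{det1} by unpacking the Chebyshev decoration. Consider part (1), where $\gcd(d_1,n_1)=2$; write $(d_1,n_1)=(2a,2b)$ and set $P:=(a,b)=(d_1/2,n_1/2)$, which is primitive. By Proposition~\ref{property_cheby}(1) we have $T_2(x)=x^2-2$, so the decorated curve is $(d_1,n_1)_T=P*P-2$, where $P*P$ is simply the two‑component multicurve $(d_1,n_1)$: two parallel copies of a simple closed curve can be isotoped to be disjoint, hence contribute no crossing and no lower‑order skein terms. Therefore $(d_1,n_1)_T*(d_2,n_2)=P*P*(d_2,n_2)-2(d_2,n_2)$, and by associativity of the algebra it suffices to evaluate $P*P*(d_2,n_2)$.

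The arithmetic hypothesis $d_1n_2-d_2n_1=2$ reads $2(an_2-bd_2)=2$, i.e. the primitive curve $P$ and $(d_2,n_2)$ have determinant exactly $1$. This is the crucial observation: every pairwise product arising in the iteration has determinant $\pm 1$ and so is governed by Lemma~\ref{det1}. I would first apply Lemma~\ref{det1} to $P*(d_2,n_2)$, then multiply the result on the left by $P$ a second time. Each resulting summand $(a\pm d_2,\,b\pm n_2)$ again has determinant $\pm 1$ with $P$, so a second application of Lemma~\ref{det1} resolves the inner products. Throughout, the central elements $R_{\ast}$ (central by Theorem~\ref{TeoCenter}) can be pulled out of all products freely, and the constant from $T_2$ acts by scalar multiplication.

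The delicate bookkeeping — which I expect to be the main, though purely mechanical, obstacle — has three ingredients. First, the determinant‑$(-1)$ sub‑case: when the inner product has determinant $-1$, Lemma~\ref{det1} applies with the roles of $A^2$ and $A^{-2}$ interchanged, consistent with the order‑reversal rule $A\leftrightarrow A^{-1}$ already visible in the computation of $(1,0)*(0,1)$ and its reverse in Figure~\ref{FigF10}. Second, the slope normalization: using the convention $(-d,-n)=(d,n)$, terms such as $(-d_2,-n_2)$ collapse back to $(d_2,n_2)$. Third, the reduction of the $R$‑indices modulo $2$, which turns each $R_{2a\pm d_2,\,2b\pm n_2}$ into $R_{d_2,n_2}$. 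After collecting terms, the two copies of $(d_2,n_2)$ generated by the two ``middle'' summands (each of the form $A^{2}\cdot A^{-2}(d_2,n_2)$) are exactly cancelled by the $-2(d_2,n_2)$ coming from $T_2$, leaving precisely the four terms $A^4(d_1+d_2,n_1+n_2)$, $A^{-4}(d_1-d_2,n_1-n_2)$, $(d_1/2,n_1/2)R_{d_1/2+d_2,\,n_1/2+n_2}$ and $(A^2+A^{-2})R_{d_2,n_2}$ claimed in the statement.

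For part (2), where $(d_2,n_2)$ is the doubled curve, I would proceed symmetrically: set $Q:=(d_2/2,n_2/2)$, note that the hypothesis now gives $d_1e-cn_1=1$ for $(c,e)=Q$, write $(d_2,n_2)_T=Q*Q-2$, and expand $(d_1,n_1)*Q*Q$ by the same two‑step application of Lemma~\ref{det1}, this time grouping on the right. Alternatively, part (2) can be deduced directly from part (1) via the order‑reversing symmetry of the algebra, which exchanges $A$ and $A^{-1}$ and reflects the slopes. Either route yields the stated formula with the terms $R_{d_1+d_2/2,\,n_1+n_2/2}(d_2/2,n_2/2)$ and $(A^2+A^{-2})R_{d_1,n_1}$.
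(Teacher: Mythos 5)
Your proposal is correct and takes essentially the same route as the paper's own proof: both unpack the decoration via $T_2(x)=x^2-2$, observe that $(d_1/2,n_1/2)$ has determinant $1$ with $(d_2,n_2)$, apply Lemma \ref{det1} twice, and cancel the two middle terms $A^{2}\cdot A^{-2}(d_2,n_2)$ and $A^{-2}\cdot A^{2}(d_2,n_2)$ against the $-2(d_2,n_2)$ coming from $T_2$, treating part (2) analogously. The only cosmetic difference is that, under the paper's normalization $(-d,-n)=(d,n)$, both inner products in the second application can be written with determinant exactly $+1$, so the determinant-$(-1)$ sub-case you flag never actually arises.
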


\begin{proof}
	
We prove case $(1)$ by applying Lemma \ref{det1} twice:	

$$(d_1,n_1)_T*(d_2,n_2)=\bigg(\frac{n_1}{d_1}\bigg)_T * \frac{n_2}{d_2} = \frac{n_1/2}{d_1/2}*\bigg(\frac{n_1/2}{d_1/2}*\frac{n_2}{d_2}\bigg) - 2\bigg(\frac{n_2}{d_2}\bigg) \stackrel{Lemma \ref {det1}}{=\joinrel=\joinrel=\joinrel=\joinrel=\joinrel=}$$
$$ \frac{n_1/2}{d_1/2}*\left(A^2\frac{n_1/2 +n_2}{d_1/2+d_2} + A^{-2}\frac{n_2-n_1/2}{d_2 - d_1/2} +R_{d_1/2 +d_2,n_1/2+n_2}\right) - 2\bigg(\frac{n_2}{d_2}\bigg) \stackrel{Lemma \ref {det1}}{=\joinrel=\joinrel=\joinrel=\joinrel=\joinrel=} $$
$$A^2\left(A^2\frac{n_1+n_2}{d_1+d_2} + A^{-2}\frac{n_2}{d_2}+ R_{d_2,n_2}\right) + $$ 
$$A^{-2}\left(A^2\frac{n_2}{d_2} +A^{-2}\frac{n_2-n_1}{d_2-d_1} + R_{d_2,n_2}\right) + \frac{n_1/2}{d_1/2}R_{d_1/2 +d_2,n_1/2+n_2} - 2\bigg(\frac{n_2}{d_2}\bigg)=$$
$$A^4\frac{n_1+n_2}{d_1+d_2} + A^{-4}\frac{n_2-n_1}{d_2-d_1}+ 
 \frac{n_1/2}{d_1/2}R_{d_1/2 +d_2,n_1/2+n_2} + (A^2+A^{-2})R_{d_2,n_2} .$$
 
 The proof of case $(2)$ is analogous. 	
\end{proof}	

\begin{remark}
After switching $A$ by $A^{-1}$ Lemmas \ref{det1} holds when the determinant of the curves is equal to $-1$ and Lemmas \ref{det21} and \ref{det22} hold when the determinant of the curves is equal to $-2$. 
\end{remark}

\subsection{Two closed formulas}
In this subsection we present formulas for the product of two infinite families of curves and the curve $(0,1)$.

\begin{theorem}\label{Lemma 14.1} Let $m>0$. Then, 
	$$(m,0)_T*(0,1) = A^{2m}(m,1) + A^{-2m}(m,-1) + (m-1,0)_SR_{1,1} +$$
	$$\sum_{i=1}^{m-1}(A^{2i}+A^{-2i})(m-1-i,0)_SR_{i+1,1}. $$
\end{theorem}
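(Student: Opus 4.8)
The plan is to prove the identity by induction on $m$, exploiting the fact that the curves $(k,0)$ are mutually parallel and hence commute, so that they generate a commutative polynomial subalgebra in which the Chebyshev identities hold verbatim. Concretely, I will use the first‑kind recurrence $(1,0)*(m-1,0)_T=(m,0)_T+(m-2,0)_T$ and the second‑kind product rule $(1,0)*(k,0)_S=(k+1,0)_S+(k-1,0)_S$, together with the conventions $(0,0)_S=S_0=1$ and $(-1,0)_S=S_{-1}=0$. Rewriting the first identity and inserting $*(0,1)$ gives the driving relation
$$(m,0)_T*(0,1)=(1,0)*\big[(m-1,0)_T*(0,1)\big]-(m-2,0)_T*(0,1),$$
which reduces the claim for $m$ to the claims for $m-1$ and $m-2$.

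For the base cases I would verify $m=1$ directly from Lemma \ref{det1}(ii) (here $(1,0)_T=(1,0)$ and the displayed sum is empty), and $m=2$ from Lemma \ref{det22}(1) applied to $(2,0)_T*(0,1)$, whose associated determinant is $2$ with $\gcd(2,0)=2$; both outputs agree with the right‑hand side after using $(1,0)_S=(1,0)$ and the parity reduction $R_{2,1}=R_{0,1}$. It is also convenient to record that the formula degenerates correctly to $P_0=2(0,1)=(0,0)_T*(0,1)$, so the recursion is well seeded for $m\ge 3$.

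For the inductive step ($m\ge 3$) I would substitute the induction hypothesis for $(m-1,0)_T*(0,1)$ into the driving relation and left‑multiply every summand by $(1,0)$. The two genuine‑curve summands $A^{\pm 2(m-1)}(m-1,\pm1)$ have determinant $\pm1$ against $(1,0)$, so Lemma \ref{det1} and its $A\mapsto A^{-1}$ mirror (the Remark) apply, producing the leading curves $A^{\pm 2m}(m,\pm1)$, the intermediate curves $(2-m,n)=(m-2,-n)$, and bare central terms $A^{\pm(2m-2)}R_{m,1}$. Each remaining summand carries a central factor $R_{\ast}$, so it is multiplied through the second‑kind product rule. After subtracting the induction hypothesis for $(m-2,0)_T*(0,1)$ and collecting: the $(m-2,\pm1)$ contributions cancel exactly against the subtracted curve terms; the two bare pieces combine into the $i=m-1$ summand $(A^{2(m-1)}+A^{-2(m-1)})(0,0)_SR_{m,1}$ of $P_m$; the term $(m-1,0)_SR_{1,1}$ arises as the $(m-1,0)_S$ half of $(1,0)*(m-2,0)_S\,R_{1,1}$, whose $(m-3,0)_S$ half cancels the matching term of the $(m-2,0)_T*(0,1)$ hypothesis; and in the main sum the $(m-1-i,0)_S$ halves reassemble $P_m$'s sum while the $(m-3-i,0)_S$ halves telescope against the $(m-2,0)_T*(0,1)$ sum, the out‑of‑range boundary term vanishing because $(-1,0)_S=0$. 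What remains is exactly $P_m$.

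The main obstacle is entirely bookkeeping rather than conceptual: one must track the parity‑reduced indices $R_{i+1,1}$ (alternating between $R_{0,1}$ and $R_{1,1}$) consistently through the reindexing, confirm that the $(m-3-i,0)_S$ half of $(1,0)*(m-2-i,0)_S$ annihilates the full sum coming from the $(m-2,0)_T*(0,1)$ hypothesis up to the single $S_{-1}=0$ boundary term, and apply the small‑index Chebyshev conventions and the slope‑sign convention $(2-m,n)=(m-2,-n)$ uniformly for $m\ge 3$ (the low values $m=1,2$ being absorbed into the base cases). No ingredient beyond Lemma \ref{det1}, Lemma \ref{det22}, and the commutativity of the $(k,0)$‑subalgebra is needed.
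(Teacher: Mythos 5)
Your proposal is correct and follows essentially the same route as the paper's proof: induction on $m$ driven by the Chebyshev recurrence $(m,0)_T*(0,1)=(1,0)*\left[(m-1,0)_T*(0,1)\right]-(m-2,0)_T*(0,1)$, with base cases $m=1,2$ supplied by Lemmas \ref{det1} and \ref{det22}, the determinant-$\pm1$ products handled by Lemma \ref{det1} and its $A\mapsto A^{-1}$ mirror, and the central terms propagated through the fact that $T_n$ and $S_n$ satisfy the same recursion. The only differences are cosmetic: you phrase the step as $m-1,m-2\Rightarrow m$ rather than $m\Rightarrow m+1$, and you make explicit the telescoping cancellations and boundary conventions $(0,0)_S=1$, $(-1,0)_S=0$ that the paper leaves implicit.
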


\begin{proof} For $m=1,2$ the formula follows from Lemmas \ref{det1} and \ref{det22}. Now we assume that the formula holds up to $m$ ($m>2$) and proceed by induction on $m$ by applying the recurrence relation of Chebyshev polynomials:
	
	$$(m+1,0)_T*(0,1) \stackrel{Chebyshev}{=\joinrel=\joinrel=\joinrel=\joinrel=\joinrel=\joinrel=\joinrel} (1,0)*(m,0)_T*(0,1)- (m-1,0)_T*(0,1)\stackrel{Induction}{=\joinrel=\joinrel=\joinrel=\joinrel=\joinrel=\joinrel=\joinrel} $$
	$$(1,0)*[A^{2m}(m,1) + A^{-2m}(m,-1)]- [A^{2m-2}(m-1,1) + A^{-2m+2}(m-1,-1)] + $$ 	
	$$(1,0)*(m-1,0)_SR_{1,1} - (m-2,0)_SR_{1,1} + $$
	$$(1,0)*\left[\sum_{i=1}^{m-1}(A^{2i}+A^{-2i})(m-1-i,0)_SR_{i+1,1}\right] 
	- \sum_{i=1}^{m-2}(A^{2i}+A^{-2i})(m-2-i,0)_SR_{i+1,1} \stackrel[Det=\pm 1]{Chebyshev}{=\joinrel=\joinrel=\joinrel=\joinrel=\joinrel=\joinrel=\joinrel} $$
	$$A^{2m+2}(m+1,1)+A^{2m-2}(m-1,1)+ A^{2m}R_{m+1,1}+ A^{-2m}R_{m+1,1}+ A^{2-2m}(m-1,-1) $$
	$$+ A^{-2m-2}(m+1,-1) + -A^{2m-2}(m-1,1) - A^{2-2m}(m-1,-1) + $$ 
	$$(m,0)_SR_{1,1} +
	\sum_{i=1}^{m-1}(A^{2i}+A^{-2i})(m-i,0)_SR_{i+1,1}=$$
	$$A^{2m+2}(m+1,1)+ A^{-2m-2}(m+1,-1) + (m,0)_SR_{1,1} +$$
	$$\sum_{i=1}^{m}(A^{2i}+A^{-2i})(m-i,0)_SR_{i+1,1},$$ as needed.\footnote{Notice that we use the fact that the Chebyshev polynomials $T_n$ and $S_n$ have the same recursive relation.}
\end{proof}

Let $[n]_{q}=1+q+q^2+...+q^{n-1}$ denote the $n^{th}$ quantum integer. The following theorem, whose proof is given in the Appendix, gives the formula  for the product of curves in the family $(n,1)*(0,1).$

\newtheorem*{thm:xiao}{Theorem \ref{thm:xiao}}
\begin{theorem}\label{thm:xiao}
	
	Let $n$ be a non-negative integer. Then, 
	$$(n,1)*(0,1)=A^{2n}(n,2)_T + A^{-2n}(n,0)_T + \sum_{i=0}^{n-1}\alpha_{n-1}\beta_i +A^2\sum_{i=1}^{n-1}[n_i]_{A^4} \frac{1}{i}R_{i-1,1},$$
	where
	\[
	n_i=
	\left\{
	\begin{array}{ll}
	\left[i\right]_{A^4} &  \mbox{for $ i\leq \frac{n}{2}, $}\\
	\left[n-i\right]_{A^4} & \mbox{for $i\geq \frac{n}{2},$}
	\end{array}
	\right.
	\]

	\begin{minipage}[t]{0.45\textwidth}
		$$ \alpha_{i}=\left\{
		\begin{array}{rcl}
		R_{1,0}       &      & {i=1,}\\
		y     &      & {i=2,}\\
		R_{0,1}R_{1,1}+R_{1,0}     &      & {i=3,}\\
		\frac{(i-2)}{2}(R_{0,1}^{2}+R_{1,1}^{2})       &      & {i>3  \mbox{ even,}}\\
		(i-2)R_{0,1}R_{1,1}       &      & {i>3 \mbox{ odd, }} 
		\end{array} \right. $$
	\end{minipage}
	\begin{minipage}[t]{0.45\textwidth}
		$$\mbox {and } \ \beta_{i}=\left\{
		\begin{array}{lll}
		\sum\limits_{j=0}^{(i-1)/2} A^{-2i+8j}(i-2j,0)_{S}      &      & {i  \mbox{ odd,}}\\
		\sum\limits_{j=0}^{i/2} A^{-2i+8j}(i-2j,0)_{S}        &      & {i \mbox{ even}}.
		\end{array} \right. $$
	\end{minipage}
	\ \\
	
	Notice that all the coefficients are positive and unimodal.  
	
\end{theorem}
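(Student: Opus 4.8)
The plan is to argue by strong induction on $n$, exactly in the spirit of the proof of Theorem~\ref{Lemma 14.1}. The base cases $n=0,1,2$ are checked by hand: for $n=0$ the identity reduces to $(0,1)*(0,1)=(0,2)$ (consistent with $(0,0)_T=2$ and $(0,2)_T=(0,2)-2$), while $n=1$ and $n=2$ are precisely Lemma~\ref{det1}(iii) and Lemma~\ref{det21} after expanding the low-index $\alpha_i$, $\beta_i$ and the single quantum-integer term. The engine of the induction is the determinant-one product $(1,0)*(n,1)$, to which Lemma~\ref{det1} applies directly (the determinant is $+1$, so no $A\leftrightarrow A^{-1}$ swap is needed):
\[
(1,0)*(n,1)=A^{2}(n+1,1)+A^{-2}(n-1,1)+R_{n+1,1}.
\]
Writing $P_n:=(n,1)*(0,1)$, solving for $(n+1,1)$ and multiplying on the right by $(0,1)$ gives, via associativity,
\[
A^{2}P_{n+1}=(1,0)*P_n-A^{-2}P_{n-1}-R_{n+1,1}\,(0,1).
\]
Thus everything reduces to evaluating $(1,0)*P_n$ from the inductive description of $P_n$.

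To compute $(1,0)*P_n$ I would distribute $(1,0)$ over the inductive expression for $P_n$ and commute it past all central factors ($R_{\ast}$, $y$ and the $\alpha_i$ are central by Theorem~\ref{TeoCenter}). Only four elementary products then occur. The single-slope Chebyshev pieces obey the determinant-zero rules $(1,0)*(n,0)_T=(n+1,0)_T+(n-1,0)_T$ and $(1,0)*(m,0)_S=(m+1,0)_S+(m-1,0)_S$, which follow from the identities $T_1T_m=T_{m+1}+T_{m-1}$ and $T_1S_m=S_{m+1}+S_{m-1}$ in the polynomial subalgebra generated by a fixed slope. Each curve $(i,1)$ in the last sum meets $(1,0)$ with determinant one, so Lemma~\ref{det1} gives $(1,0)*(i,1)=A^{2}(i+1,1)+A^{-2}(i-1,1)+R_{i+1,1}$. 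The only delicate factor is $(1,0)*(n,2)_T$, of determinant two, whose value depends on the parity of $n$: Lemma~\ref{det21} for $n$ odd and Lemma~\ref{det22}(2) for $n$ even.

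Assembling these, I would form $A^{-2}\big[(1,0)*P_n-A^{-2}P_{n-1}-R_{n+1,1}(0,1)\big]$ and match it term by term against the claimed formula for $P_{n+1}$. Three structural facts make the match go through and organize the computation. First, as in Theorem~\ref{Lemma 14.1}, the three-term recurrence is engineered so that the non-leading Chebyshev terms cancel: the $A^{-4}(n-1,2)_T$ coming from $(1,0)*(n,2)_T$ cancels against the leading term of $-A^{-2}P_{n-1}$, leaving exactly $A^{2(n+1)}(n+1,2)_T+A^{-2(n+1)}(n+1,0)_T$, which is the Frohman--Gelca torus output of Theorem~\ref{TeoProdtoSum} with $A$ replaced by $A^{2}$ (cf.\ Corollary~\ref{2.8}). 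Second, the factor $R_{i+1,1}$ produced by $(1,0)*(i,1)$ multiplies the central $R$ already attached to $(i,1)$, creating products of two $R$'s; these are precisely the quantities packaged by the higher $\alpha_i$, namely $\tfrac{i-2}{2}(R_{0,1}^{2}+R_{1,1}^{2})$ and $(i-2)R_{0,1}R_{1,1}$, so the convolution $\sum_i\alpha_{n-i}\beta_i$ regenerates itself and advances from total weight $n$ to $n+1$. Third, the slope-zero shifts $(m,0)_S\mapsto(m\pm1,0)_S$ interact with the geometric weights $A^{-2i+8j}$ inside $\beta_i$ in a telescoping fashion that carries $\beta$ from level $n-1$ to level $n$.

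The main obstacle is the coefficient bookkeeping rather than any individual geometric step. The sharpest point is reconciling the parity-dependent output of $(1,0)*(n,2)_T$ with the single uniform formula in the statement: this is exactly where the reflected quantum integers $n_i=[i]_{A^{4}}$ for $i\le n/2$ and $n_i=[n-i]_{A^{4}}$ for $i\ge n/2$ arise, and verifying that the recurrence sends the collection $\{[n_i]_{A^{4}}\}$ at level $n$ to that at level $n+1$, including the turnover at the midpoint, is the crux and is what forces the coefficients to be unimodal. Alongside this one must track every modulo-two index on the $R$'s through each shift $i\mapsto i\pm1$, so that each curve is paired with the $R$ whose slope class matches it, and confirm that the $S$-polynomial telescoping inside the $\beta_i$ survives multiplication by $A^{\pm2}$. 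Once these three coefficient identities are in place, collecting terms yields the stated formula, and the positivity and unimodality assertions are then immediate from the closed forms of $[n_i]_{A^{4}}$, $\alpha_i$ and $\beta_i$.
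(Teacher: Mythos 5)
Your plan coincides with the paper's own proof: the appendix argues by induction with exactly your recurrence $A^{2}P_{n+1}=(1,0)*P_{n}-A^{-2}P_{n-1}-R_{n+1,1}(0,1)$ (written there as $(n,1)*(0,1)=A^{-2}(1,0)*(n-1,1)*(0,1)-A^{-2}R_{n,1}(0,1)-A^{-4}(n-2,1)*(0,1)$), the same base cases $n=0,1,2$ from the initial data, and the same elementary products from Lemmas \ref{det1}, \ref{det21}, \ref{det22} together with the Chebyshev recurrences, including your cancellation of the $(n-1,2)_T$ and $(n-1,0)_T$ terms against $-A^{-2}P_{n-1}$ and your $\beta$-telescoping, which the paper records as $A^{-2}(1,0)*\beta_{i}-A^{-4}\beta_{i-1}=\beta_{i+1}$ for even $i$ and $\beta_{i+1}-A^{2i+2}$ for odd $i$. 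The only difference is one of completeness, not of method: you leave the coefficient bookkeeping (the midpoint turnover of the quantum integers $[n_i]_{A^4}$, the mod-$2$ indices on the $R$'s, and the regeneration of the convolution $\sum_i\alpha_{n-i}\beta_i$) as identified but unverified steps, whereas the paper's appendix carries them out term by term for even $n$ and declares the odd case similar.
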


\section{The algorithm} \label{SecAlgorithm}

In this section we describe how to compute the product of closed multicurves in $F_{0,4}\times I$ algebraically. 

Recall that the mapping class group of the torus is $Mod(T^{2}) = SL(2,\mathbb Z)($\cite{Dehn1}), and that of $F_{0,4}$ is the semidirect product: $Mod(F_{0,4}) = PSL(2,\mathbb Z) \ltimes (\mathbb Z_2 \times \mathbb Z_2)$ (we allow permutations of the boundary components, compare \cite{FM}). The action of $PSL(2,\mathbb Z)$ on $\mathbb Q \cup \frac{1}{0}$ is encoded by the Farey Diagram shown in Figure \ref{fig:farey-sujoy}. Following is a short description (see \cite{Hat} for more detail). 

The Farey diagram consists of vertices indexed by fractions $\mathbb Q \cup \frac{1}{0},$ where vertices $\frac{a}{b}$ and $\frac{c}{d}$ are connected by an edge if the determinant $(ad - bc)$ of the matrix  
$\left[ {\begin{array}{cc}
	a & c \\
	b & d \\
	\end{array} } \right]$ is $\pm 1$.
		The vertices of the Farey diagram are labeled according to the following scheme: if the labels at the ends of the long edge of a triangle are $\frac{a}{b}$ and $\frac{c}{d}$, then the label on the third vertex of the triangle is the fraction $\frac{a+b}{c+d}$. This fraction is known as the \textit{mediant} of $\frac{a}{b}$ and $\frac{c}{d}$.
	
	$PSL(2,\mathbb{Z})$ acts on $F_{0,4}$ as follows: the generators $s_{1}=\left[ {\begin{array}{cc}
		1 & 1 \\
		0 & 1 \\
		\end{array} } \right]$ and $s_{2}=\left[ {\begin{array}{cc}
		1 & 0 \\
		1 & 1 \\
		\end{array} } \right]$ act on the fractions $\frac{n}{d}$ by matrix multiplication.  The generators act on the center by switching $a_3$ with $a_4$ and $a_1$ with $a_4$ respectively. Therefore, under the action of $s_1$, $R_{1,1}$ goes to $R_{1,0}$, $R_{0,1}$ goes to $R_{1,1}$ and $R_{0,1}$ remains unchanged. The second generator, $s_2$, sends $R_{1,1}$ to $R_{0,1}$, $R_{0,1}$ to $R_{1,1}$ and keeps $R_{1,0}$ fixed. The variable $y$ remains unchanged by the action.
 
\begin{figure}
	\centering
		\includegraphics[width=0.8\linewidth]{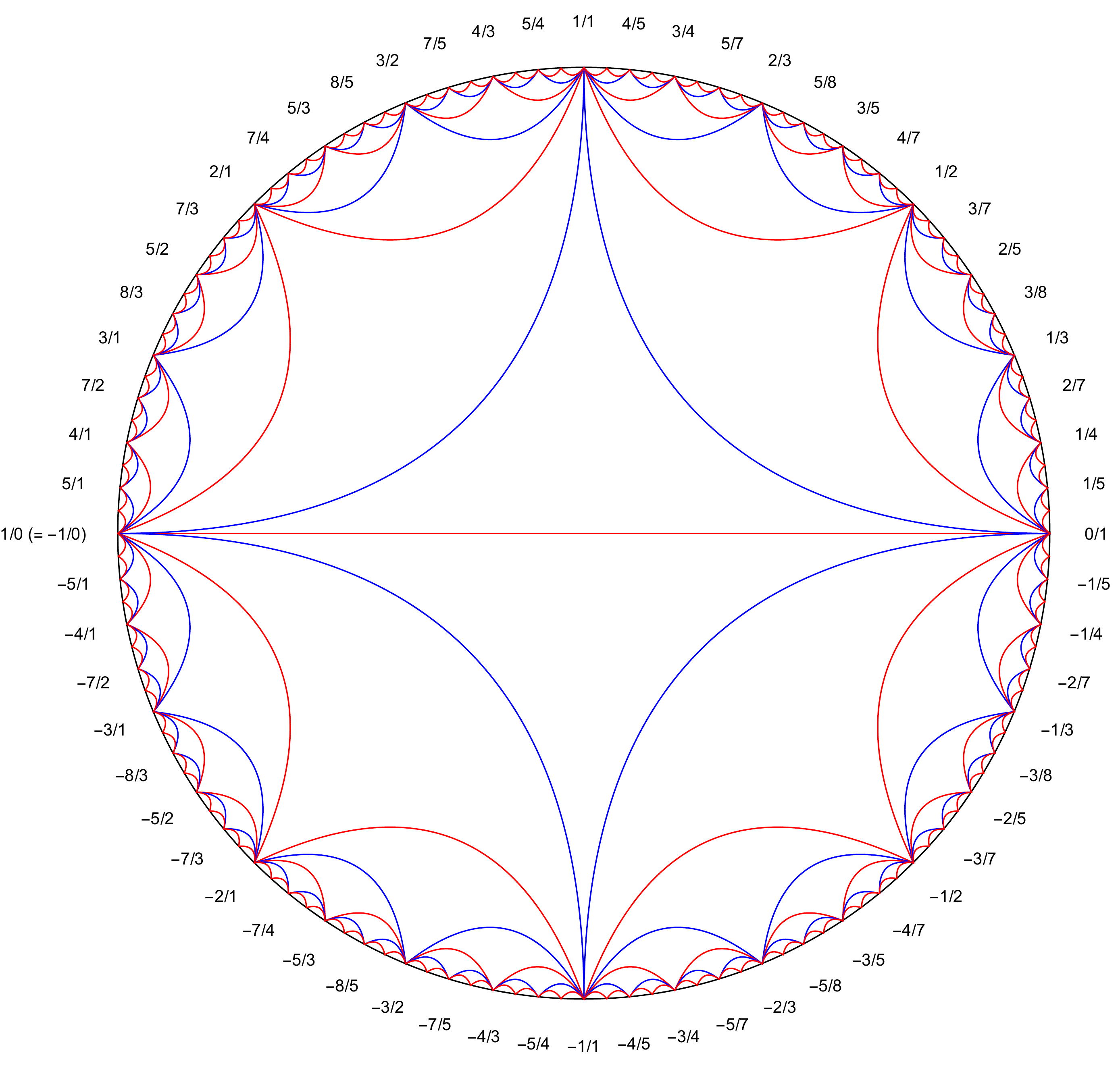}
	\caption{The Farey diagram.}
	\label{fig:farey-sujoy}
\end{figure}

We now analyze the algorithm which, given an ordered pair of curves in $F_{0,4} \times I$, returns their product as an element of KBSA$(F_{0,4} \times I)$ in terms of the generators in the basis. The first step in the algorithm consists of reducing the product $(d_1, n_1)*(d_2, n_2)$ to the form $(d,n)*(0,k),$ with $0\leq n<d$ and $k>0$. This is achieved by using the action of $PSL(2,\mathbb{Z})$ on $F_{0,4}$ together with an adapted version of the Euclidean algorithm. 

We now continue with the following lemmas in preparation for step 2 of the algorithm. 

Lemma \ref{lemma 1} relates the determinant ($d_1n_2 - d_2n_1$) of $\left[ {\begin{array}{cc}
	d_1 & n_1 \\
	d_2 & n_2 \\
	\end{array} } \right]$ with the geometric intersection number of a pair of multicurves $(d_1, n_1)$ and $(d_2, n_2)$. Lemma \ref{multi} is 
used to show the correctness of the algorithm when $k >1$ or $gcd(d,n)>1$.  Lemmas \ref{Lemma 4.4} and \ref{Lemma 4.5} are used in Theorem \ref{main} to prove the correctness of the algorithm when $k=1$ and $gcd(d,n)=1$. 

\begin{lemma}\label{lemma 1}
	
	Consider any two simple closed multicurves $(c_{1}d_{1},c_{1}n_{1})$ and $(c_{2}d_{2},c_{2}n_{2})$ with $gcd(d_{1},n_{1})=1=gcd(d_{2},n_{2})$ in $F_{0,4} \times I$, where $c_{i}, d_{i}, n_{i} \in \mathbb{Z}$, and $i = 1, 2$. Then, their geometric intersection number is equal to $2|c_{1}d_{1}c_{2}n_{2}-c_{1}n_{1}c_{2}d_{2}|$. See section 2.2.5 in \cite{FM}.	
\end{lemma}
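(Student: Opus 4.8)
The plan is to first strip off the multiplicities $c_1,c_2$ and reduce to the case of primitive slopes, and then to transfer the intersection count to the torus through the double branched cover of Figure \ref{abcdef}. Writing $i(\cdot,\cdot)$ for the geometric (minimal) intersection number, I note that since $\gcd(d_i,n_i)=1$, the multicurve $(c_id_i,c_in_i)$ is exactly $c_i$ disjoint parallel copies of the primitive curve $\gamma_i:=(d_i,n_i)$, all confined to a thin annular neighborhood of $\gamma_i$. Geometric intersection number is multiplicative under taking parallel copies (put $\gamma_1,\gamma_2$ in minimal position and spread the copies in thin annuli; no bigons are created), so $i(c_1\gamma_1,c_2\gamma_2)=c_1c_2\,i(\gamma_1,\gamma_2)$. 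Hence it suffices to prove $i(\gamma_1,\gamma_2)=2|d_1n_2-d_2n_1|$ in the primitive case, because then $c_1c_2\cdot 2|d_1n_2-d_2n_1| = 2|c_1d_1\,c_2n_2 - c_1n_1\,c_2d_2|$, which is the claimed value.

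For primitive slopes I would work on the flat torus $T^2=\mathbb{R}^2/\mathbb{Z}^2$ with the elliptic involution $\iota(x,y)=(-x,-y)$, whose quotient is the pillowcase $S^2$ with four branch points, identified with the punctures of $F_{0,4}$ (this is the cover $F_{1,0}\to S^2$ of Figure \ref{abcdef}). The straight line of slope $n_i/d_i$ is a geodesic; a small parallel translate of it avoids the four fixed points of $\iota$ while its $\iota$-orbit, as a set, is the two-component torus multicurve of class $(2d_i,2n_i)$, and this orbit descends to a single geodesic representative of $\gamma_i$ on the flat pillowcase. Straight lines realize the geometric intersection number on the flat torus and descend to minimal position on the quotient orbifold, so the count is honest. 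Upstairs, $i_{T^2}\bigl((2d_1,2n_1),(2d_2,2n_2)\bigr)=|2d_1\cdot 2n_2-2n_1\cdot 2d_2|=4|d_1n_2-d_2n_1|$. Because every intersection point lies off the branch locus, the degree-two cover is unramified near the intersection set and each point of $\gamma_1\cap\gamma_2$ has exactly two preimages, so $i(\gamma_1,\gamma_2)=\tfrac12\cdot 4|d_1n_2-d_2n_1| = 2|d_1n_2-d_2n_1|$, as required. Equivalently, this is the Dehn–Thurston intersection formula on $F_{0,4}$ (cf. \cite{FM}), and the branched cover explains the extra factor of $2$ relative to the torus.

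The main obstacle is the factor-of-two bookkeeping together with the minimal-position issue across the cover. Concretely, one must (a) choose the $\iota$-invariant representatives so as to avoid the four fixed points, ensuring that the cover restricts to an honest unramified double cover over a neighborhood of the intersection set (so that upstairs crossings come in exact $\iota$-pairs), and (b) guarantee that the equivariant representatives are \emph{simultaneously} in minimal position upstairs and downstairs, so that the raw crossing count equals the geometric intersection number rather than merely an upper bound. Using geodesics for the flat, $\iota$-invariant metric disposes of both difficulties at once: geodesics on a flat surface are automatically bigon-free, and since $\iota$ acts by isometries the geodesic representatives upstairs descend to geodesic representatives downstairs, which are therefore in minimal position as well.
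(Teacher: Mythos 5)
Your proof is correct, and it fills in precisely the argument the paper delegates to its citation: the paper offers no proof of this lemma beyond the pointer to Section 2.2.5 of \cite{FM}, where the four-punctured sphere is handled exactly as you do, via the elliptic involution on the flat torus, the torus formula $i\bigl((p,q),(p',q')\bigr)=|pq'-p'q|$, and the unramified factor-of-two count away from the branch points. Your two supplementary observations --- multiplicativity of $i(\cdot,\cdot)$ under parallel copies to reduce to primitive slopes, and the use of $\iota$-invariant flat geodesics to get simultaneous minimal position upstairs and downstairs --- are the right way to make that sketch airtight, so nothing further is needed.
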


\begin{lemma}\label{multi}
	
	Consider $(c_{1}d_{1},c_{1}n_{1})*(c_{2}d_{2},c_{2}n_{2})$ with $gcd(d_{1},n_{1})=1=gcd(d_{2},n_{2})$. Without loss of generality, assume $c_{1}>c_{2}\geq1$.  Suppose $((c_{1}-1)d_{1},(c_{1}-1)n_{1})*(c_{2}d_{2},c_{2}n_{2})=\sum_{i}u_{i}(q_{i},p_{i})$, $u_{i} \in Z(\mathcal{S}_{2,\infty}(F_{0,4}))$, then we have $|d_{1}p_{i}-n_{1}q_{i}|<|c_{1}d_{1}c_{2}n_{2}-c_{1}n_{1}c_{2}d_{2}|$.
	
\end{lemma}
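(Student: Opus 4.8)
The plan is to read both sides of the desired inequality as (halves of) geometric intersection numbers and then to control how those numbers behave under smoothing. Write $\gamma=(d_1,n_1)$ for the primitive slope of the first factor and set $\Delta=|d_1n_2-n_1d_2|$. We may assume $\Delta\geq 1$: if $\Delta=0$ the two slopes coincide, the product carries no crossings and consists of parallel copies of $\gamma$, so $d_1p_i-n_1q_i=0$ and the statement is vacuous (this degenerate case is handled separately in the algorithm). By Lemma \ref{lemma 1} the right-hand side $|c_1d_1c_2n_2-c_1n_1c_2d_2|=c_1c_2\Delta$ is one half of the geometric intersection number of the multicurves $(c_1d_1,c_1n_1)$ and $(c_2d_2,c_2n_2)$; likewise, applying Lemma \ref{lemma 1} to $\gamma$ (which is primitive) and to $(q_i,p_i)$ identifies $|d_1p_i-n_1q_i|$ with one half of the geometric intersection number of $\gamma$ with $(q_i,p_i)$. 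Hence it suffices to prove, for every term $(q_i,p_i)$ occurring in $((c_1-1)d_1,(c_1-1)n_1)*(c_2d_2,c_2n_2)$, that $\gamma$ meets $(q_i,p_i)$ in at most $2c_2\Delta$ points, and then to note $c_2\Delta<c_1c_2\Delta$.

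First I would fix a diagram $D$ of the reduced product in $F_{0,4}$, with the $(c_1-1)$ parallel copies of $\gamma$ lying above the $c_2$ parallel copies of $(d_2,n_2)$, so that every crossing of $D$ occurs between an upper and a lower strand. I then introduce an \emph{auxiliary} curve $\gamma_0$ of slope $\gamma$, not part of $D$, placed in general position with $D$. Since $\gamma_0$ is parallel to the upper strands it can be made disjoint from all of them, while it meets the lower factor $(c_2d_2,c_2n_2)$ minimally in $2c_2\Delta$ points; moreover, the crossings of $D$ are finitely many points, so a small perturbation makes $\gamma_0$ avoid all of them without creating new intersections. Thus $\gamma_0\cap D$ consists of exactly $2c_2\Delta$ points, all on the lower strands and away from the crossings.

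Each term $(q_i,p_i)$ comes from a state $s$ of $D$, i.e. a choice of an $A$- or $B$-smoothing at every crossing; let $D_s$ be the resulting crossingless diagram. Because each smoothing is supported in a small neighborhood of a crossing of $D$, and all these neighborhoods are disjoint from $\gamma_0$, we have $D_s\cap\gamma_0=D\cap\gamma_0$ as point sets, whence $|D_s\cap\gamma_0|=2c_2\Delta$. Using that in $F_{0,4}$ every reduced essential multicurve is a union of parallel copies of a single essential slope together with boundary-parallel curves (two distinct essential slopes always intersect), the essential part of $D_s$ represents $(q_i,p_i)$, while the contractible loops contribute the factors $-A^2-A^{-2}$ and the boundary-parallel loops are absorbed into the central coefficient $u_i$; both of these are disjoint from $\gamma_0$, so discarding them can only decrease the intersection count. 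Therefore the geometric intersection number of $\gamma_0$ with $(q_i,p_i)$ is at most $|D_s\cap\gamma_0|=2c_2\Delta$, giving $|d_1p_i-n_1q_i|\leq c_2\Delta$. Since $c_1>c_2\geq 1$ forces $c_1\geq 2$ and $\Delta\geq 1$, this yields $|d_1p_i-n_1q_i|\leq c_2\Delta<c_1c_2\Delta$, as claimed.

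The step I expect to be the main obstacle is the last bit of bookkeeping: justifying rigorously that the geometric (minimal) intersection number of $\gamma_0$ with the isotopy class $(q_i,p_i)$ is dominated by the intersection count in the particular representative $D_s$, and that passing from $D_s$ to its reduced essential multicurve — deleting the contractible loops and the boundary-parallel loops that together build $u_i$ — never raises that count. This rests on the standard fact that any representative realizes at least the geometric intersection number, combined with the $F_{0,4}$-specific identification of the unique essential slope carried by each resolved state; making the latter precise, rather than the inequality $c_2\Delta<c_1c_2\Delta$ itself, is where the care is needed.
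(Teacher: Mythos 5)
Your proof is correct and takes essentially the same approach as the paper's: your auxiliary curve $\gamma_0$ is exactly the ``remaining copy'' of $(d_1,n_1)$ in the paper's argument, and both proofs combine the determinant--intersection dictionary of Lemma \ref{lemma 1} with the observation that smoothing the crossings among the other strands cannot increase intersections with that copy, giving $2|d_1p_i-n_1q_i|\le 2c_2|d_1n_2-n_1d_2|<2c_1c_2|d_1n_2-n_1d_2|$ since $c_1\ge 2$. Your version merely makes the paper's one-line monotonicity claim precise (locality of smoothings away from $\gamma_0$, any representative bounding the geometric intersection number from above, and discarding trivial and boundary-parallel loops), which is a welcome elaboration rather than a different route.
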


\begin{proof}
	
	 The curves $(q_{i},p_{i})$ are obtained after one smooths all the crossings of $(c_{1}-1)$ curves $(d_{1},n_{1})$ and $c_{2}$ curves $(d_{2},n_{2})$.  During this process, we observe that the number of crossings, say $n$, of the remaining copy of the curve $(d_{1},n_{1})$ with $(q_{i},p_{i})$ does not increase. Thus, $n=2|d_{1}p_{i}-n_{1}q_{i}| \leq 2|d_{1}c_{2}n_{2}-n_{1}c_{2}d_{2}|\leq|c_{1}d_{1}c_{2}n_{2}-c_{1}n_{1}c_{2}d_{2}|<2|c_{1}d_{1}c_{2}n_{2}-c_{1}n_{1}c_{2}d_{2}|$.	
\end{proof}

\begin{lemma}\label{Lemma 4.4}
	Consider a curve  $(d,n)$ ($d\geq 0$) and let $(d,n)*(0,1)=\sum_i u_i(q_{i},p_{i})$,  
	with $q_i\geq 0$. Then,
	\begin{enumerate}
		\item[(1)]$|p_i|\leq n+1$ and $q_i\leq d$.
		\item[(2)]$|p_id-q_in|\leq d$.
		\item[(3)]If $n>0$ then $p_i\geq 0$. 
	\end{enumerate}
\end{lemma}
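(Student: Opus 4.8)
The plan is to reinterpret all three inequalities as bounds on geometric intersection numbers, and then to control those numbers by a single principle: resolving the crossings of a stacked diagram and discarding inessential components can never increase the geometric intersection number with a curve that avoids the crossings. Writing $i(\cdot,\cdot)$ for the geometric intersection number, the Dehn-coordinate conventions together with Lemma \ref{lemma 1} give, for any multicurve $(q,p)$ with $q\ge 0$, the identities $i((q,p),(0,1))=2q$, $i((q,p),(1,0))=2|p|$ and $i((q,p),(d,n))=2|pd-qn|$. Hence (1) is equivalent to $i((q_i,p_i),(1,0))\le 2n+2$ together with $i((q_i,p_i),(0,1))\le 2d$, and (2) is equivalent to $i((q_i,p_i),(d,n))\le 2d$.

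First I would set up the principle. Place $(d,n)$ and $(0,1)$ in minimal position, so the stacked diagram has exactly $i((d,n),(0,1))=2d$ crossings, all lying on $(0,1)$; resolving them by the Kauffman relation produces the diagrams whose essential multicurves are the $(q_i,p_i)$. Let $\rho$ be any multicurve put in general position with both factors and isotoped off the $2d$ crossing points. Because each smoothing is supported in a small disc about a crossing that $\rho$ avoids, every resolved diagram meets $\rho$ in exactly $i((d,n),\rho)+i((0,1),\rho)$ points; passing to minimal position and deleting contractible and boundary-parallel components (the latter absorbed into the central coefficients $u_i$) can only remove intersection points, so that $i((q_i,p_i),\rho)\le i((d,n),\rho)+i((0,1),\rho)$.

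I would then feed three reference curves into this inequality. With $\rho=(1,0)$, a generic representative missing the crossings, it gives $2|p_i|\le 2|n|+2$, hence $|p_i|\le |n|+1$, which is the claimed $n+1$ for $n\ge 0$. With $\rho=(0,1)^{\sharp}$, a disjoint parallel push-off of $(0,1)$ (automatically off every crossing since these lie on $(0,1)$), it gives $2q_i\le i((d,n),(0,1))+0=2d$, hence $q_i\le d$; this completes (1). With $\rho=(d,n)^{\sharp}$, a disjoint parallel push-off of $(d,n)$ (hence off the crossings, which lie on $(d,n)$), it gives $2|p_id-q_in|\le 0+i((0,1),(d,n))=2d$, which is (2). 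Part (3) is then immediate: when $n>0$ and $d>0$, (2) yields $p_i\ge q_in/d-1$, so for $q_i\ge 1$ we get $p_i\ge n/d-1>-1$ and therefore $p_i\ge 0$, while for $q_i=0$ the bound $|p_i|\le 1$ leaves $(0,1)$ as the only non-trivial curve, whose standard representative has $p_i=1$.

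The substantive point is the smoothing principle together with the legitimacy of the push-offs: one must verify that the representatives of $(1,0)$, $(0,1)^{\sharp}$ and $(d,n)^{\sharp}$ can be chosen simultaneously in minimal position with both factors and disjoint from the $2d$ crossings, so that their intersection with the stacked diagram genuinely splits as $i((d,n),\rho)+i((0,1),\rho)$, and that the boundary-parallel components pulled into the $u_i$ contribute nothing to intersections with these interior reference curves. These are routine facts about curves on surfaces, and once they are in place the three parts follow as above; I do not anticipate a serious obstacle beyond this bookkeeping, with the only genuinely geometric input being the additivity and monotonicity of geometric intersection numbers used in the second paragraph.
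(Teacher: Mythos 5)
Your proposal is correct and takes essentially the same approach as the paper: the paper also bounds the intersections of the resolved curves $(q_i,p_i)$ with reference curves --- the coordinate lines for part (1) and the curve $(d,n)$ itself for part (2), converted to determinant form via Lemma \ref{lemma 1} --- using exactly your monotonicity observation that smoothing crossings cannot increase intersection counts, and then deduces (3) from (2) by the same algebra and the same convention for $q_i=0$. The only difference is presentational: you state the smoothing/push-off principle explicitly and uniformly, where the paper leaves it as a one-line observation.
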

\begin{proof}\ 
	
\begin{enumerate}	
\item{It follows from the observation that the coordinate lines cannot cut a $(q_i,p_i)$ curve more than the number of times the curves
$(d,n)$ and $(0,1)$ together cut the $(q_i,p_i)$ curve.}
\item{We notice that the curve $(d,n)$ can intersect any curve of $(d,n)*(0,1)$ (including $(q_{i},p_{i})$) in no more than $2d$ points.  Thus, by Lemma \ref{lemma 1}, the statement follows.}
\item{This case follows from (2) since we have $-d \leq p_id-q_in$ so $\frac{n}{d}q_i-1 \leq p_i,$ and therefore for $n>0$ and $q_i>0$ we have 
	$p_i>-1.$ If $q_i=0$ then we can assume $p_i\geq 0$ by convention.}
\end{enumerate}  
\end{proof}

Lemma \ref{Lemma 4.4} is generalized in Theorem \ref{theorem 15.2}.

\begin{lemma}\label{Lemma 4.5}
	\begin{enumerate}\
	
		\item[(1)] Consider the fractions $\frac{n_1}{d_1}$,$\frac{n_2}{d_2}$, and $\frac{p}{q}$ with positive denominators. Then
		$$|n_2q-d_2p| \leq \frac{q}{d_1}|n_2d_1-n_1d_2| + \frac{d_2}{d_1}|n_1q-d_1p|.$$
		\item[(2)] In particular, if $|n_2d_1-n_1d_2|=1$, $|n_{1}q-d_{1}p|\leq d_{1}$ and $q\leq d_1,$ then $$|n_2q-d_2p| \leq 1+d_2.$$
	\end{enumerate}
\end{lemma}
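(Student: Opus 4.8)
The plan is to reduce part (1) to a single exact algebraic identity among the three ``cross terms'' appearing in the statement, and then to finish with the triangle inequality. Writing $v_1=(d_1,n_1)$, $v_2=(d_2,n_2)$, $w=(q,p)$ and thinking of the three quantities $n_2q-d_2p$, $n_2d_1-n_1d_2$, $n_1q-d_1p$ as the $2\times 2$ determinants $[w,v_2]$, $[v_1,v_2]$, $[w,v_1]$ (where $[u,v]=u_1v_2-u_2v_1$), I expect a three-term relation to hold, since any three vectors in the plane are linearly dependent. Concretely, the standard relation $[b,c]\,a-[a,c]\,b+[a,b]\,c=0$ applied to $a=v_1$, $b=w$, $c=v_2$ and read off in the first coordinate gives the identity
\[
d_1\,(n_2q-d_2p) \;=\; q\,(n_2d_1-n_1d_2) \;+\; d_2\,(n_1q-d_1p),
\]
whose coefficients $d_1,q,d_2$ are exactly the first components of $v_1,w,v_2$. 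First I would simply verify this by multiplying out the right-hand side: it equals $qn_2d_1-qn_1d_2+d_2n_1q-d_1d_2p=d_1(n_2q-d_2p)$, the two middle terms cancelling.

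Given the identity, the rest of part (1) is immediate. Since $d_1,d_2,q$ are positive (all denominators being positive), applying the triangle inequality on the right-hand side yields
\[
d_1\,|n_2q-d_2p| \;\leq\; q\,|n_2d_1-n_1d_2| \;+\; d_2\,|n_1q-d_1p|,
\]
and dividing through by $d_1>0$ produces exactly the claimed bound.

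For part (2), I would substitute the three hypotheses directly into the inequality of part (1). Using $|n_2d_1-n_1d_2|=1$ and $|n_1q-d_1p|\leq d_1$, the right-hand side is at most $\frac{q}{d_1}+\frac{d_2}{d_1}\cdot d_1=\frac{q}{d_1}+d_2$; and since $q\leq d_1$ forces $\frac{q}{d_1}\leq 1$, we conclude $|n_2q-d_2p|\leq 1+d_2$, as required.

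As for the main obstacle: there is essentially no analytic difficulty here, and once the identity is in hand the lemma is two lines. The only genuinely ``creative'' step is recognizing the correct linear combination, namely that $d_1$ times the target determinant decomposes with coefficients $q$ and $d_2$ against the other two determinants. This becomes transparent through the determinant interpretation above, where the coefficients are simply the first components of the three vectors; equivalently, they are what Cramer's rule produces when expanding $w$ in the basis $\{v_1,v_2\}$. I would therefore spend a sentence motivating the identity in that language and otherwise keep the write-up short.
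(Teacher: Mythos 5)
Your proof is correct and is essentially the paper's own argument in cleared-denominators form: dividing your identity $d_1(n_2q-d_2p)=q(n_2d_1-n_1d_2)+d_2(n_1q-d_1p)$ by the positive quantity $d_1d_2q$ yields exactly $\frac{n_2}{d_2}-\frac{p}{q}=\left(\frac{n_2}{d_2}-\frac{n_1}{d_1}\right)+\left(\frac{n_1}{d_1}-\frac{p}{q}\right)$, and the paper proves (1) precisely by dividing by $qd_2$ and invoking the triangle inequality for the rational numbers $\frac{n_1}{d_1},\frac{n_2}{d_2},\frac{p}{q}$. Your substitution argument for (2) matches the intended (implicit) deduction in the paper, so the only difference is cosmetic packaging via determinants rather than fractions.
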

\begin{proof} Inequality in (1) is essentially triangle inequality for fractions (rational numbers). We see this 
	immediately when we divide two sides of the inequality in (1) by a positive number $qd_2$. We obtain the inequality:
	$$|\frac{n_2}{d_2}- \frac{p}{q}| \leq |\frac{n_2}{d_2}-\frac{n_1}{d_1}| + |\frac{n_1}{d_1}-\frac{p}{q}|$$
	which is just a triangle inequality of rational numbers.
\end{proof}

\begin{figure}
	\centering
	\includegraphics[scale=0.5]{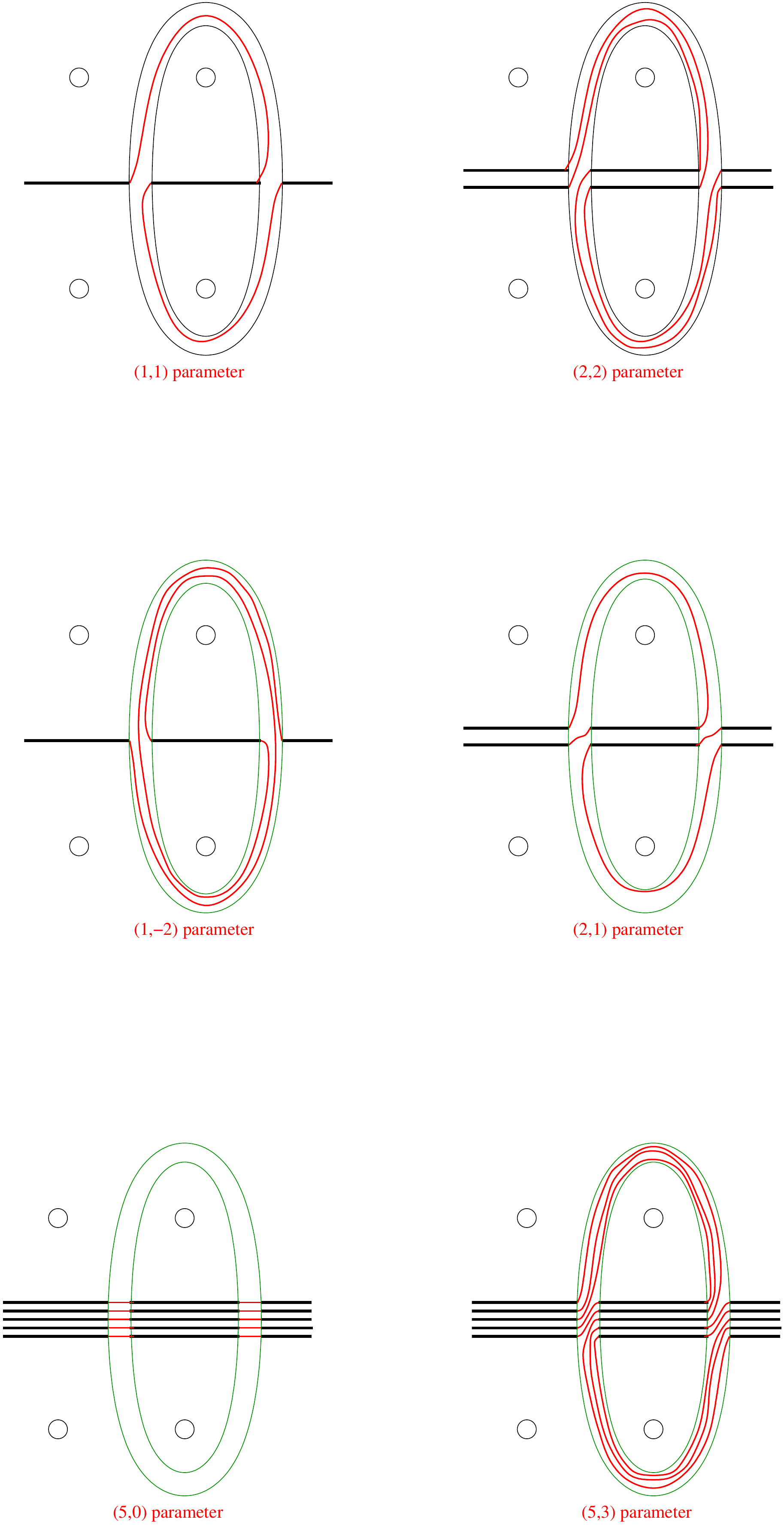}
	\caption{Illustrations of Dehn coordinates.}
	\label{fig:model-curves}
\end{figure}

\begin{theorem}\label{main}

Consider the product $(d,n)*(0,1)$ with $1< n< d \geq 3$ and $gcd(d,n)=1$.  
	Given products of pairs of curves with determinant less than $d$, we can compute $(d,n)*(0,1)$. 

\end{theorem}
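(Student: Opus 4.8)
The plan is to prove the theorem by strong induction on the determinant, using the Farey diagram to reduce the computation of $(d,n)*(0,1)$ to products of curves with strictly smaller determinant. The key observation is that $(d,n)*(0,1)$ has determinant $d$ (since the curve $(0,1)$ has second coordinate $1$, the relevant determinant is $|d\cdot 1 - n \cdot 0| = d$), and the hypothesis grants us all products of curves with determinant less than $d$ as ``initial data.'' Since $\gcd(d,n)=1$ and $1 < n < d$, the fraction $\frac{n}{d}$ is an interior vertex of the Farey diagram, so there exist unique fractions $\frac{n_1}{d_1}$ and $\frac{n_2}{d_2}$ (the two ``parents'' or supporting vertices) with $n = n_1 + n_2$, $d = d_1 + d_2$, and $n_2 d_1 - n_1 d_2 = 1$, each with determinant (against $(0,1)$, i.e. denominator) strictly less than $d$.

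First I would use Lemma \ref{det1} in the form that expresses $(d,n)$ as a combination of the product $(d_2,n_2)*(d_1,n_1)$ together with a lower term and a central correction. Concretely, since $n_2 d_1 - n_1 d_2 = 1$, Lemma \ref{det1} yields
$$(d_2,n_2)*(d_1,n_1) = A^2 (d,n) + A^{-2}(d_2-d_1, n_2-n_1) + R_{d,n},$$
and solving for $(d,n)$ expresses it via the product $(d_2,n_2)*(d_1,n_1)$ and the curve $(d_2-d_1,n_2-n_1)$, whose denominator $|d_2-d_1|$ is strictly smaller than $d$. I would then right-multiply this entire identity by $(0,1)$, obtaining
$$(d,n)*(0,1) = A^{-2}\Big[(d_2,n_2)*\big((d_1,n_1)*(0,1)\big) - A^{-2}(d_2-d_1,n_2-n_1)*(0,1) - R_{d,n}*(0,1)\Big].$$
Here associativity lets me first compute the inner product $(d_1,n_1)*(0,1)$, which has determinant $d_1 < d$ and so is available by induction, writing it as $\sum_i u_i (q_i,p_i)$ with $u_i$ central.

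The crux is then to show that each remaining product $(d_2,n_2)*(q_i,p_i)$ has determinant $|n_2 q_i - d_2 p_i|$ strictly less than $d$, so that it too lies within the induction hypothesis. This is precisely where Lemmas \ref{Lemma 4.4} and \ref{Lemma 4.5} do the work: Lemma \ref{Lemma 4.4}(1)--(2) applied to $(d_1,n_1)*(0,1)$ gives $q_i \le d_1$ and $|n_1 q_i - d_1 p_i| \le d_1$, and then Lemma \ref{Lemma 4.5}(2), invoked with $|n_2 d_1 - n_1 d_2| = 1$, yields $|n_2 q_i - d_2 p_i| \le 1 + d_2 \le d_1 + d_2 = d$. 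I would need to rule out the boundary equality $|n_2 q_i - d_2 p_i| = d$; since strict inequality is what the theorem asserts as available (``determinant less than $d$''), I expect this to be handled either by the strict form $q_i \le d_1$ combined with $d_1 \ge 1$, or by treating the extremal case separately using Lemma \ref{Lemma 4.4}(3) to control the sign of $p_i$. The term $(d_2-d_1,n_2-n_1)*(0,1)$ has determinant $|d_2-d_1|<d$ and is directly available, and $R_{d,n}*(0,1)$ is just a central multiple of $(0,1)$ since $R_{d,n}$ lies in the center.

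I expect the main obstacle to be the careful bookkeeping of the strict versus non-strict inequality in the determinant bound: the triangle-inequality estimate of Lemma \ref{Lemma 4.5}(2) delivers $\le 1 + d_2$, and one must verify this never equals $d = d_1 + d_2$ except possibly in degenerate configurations (for instance when $d_1 = 1$), where an independent argument via Lemma \ref{Lemma 4.4}(3) and the explicit closed formulas of Theorem \ref{Lemma 14.1} or Theorem \ref{thm:xiao} covers the base of the induction. Once every intermediate product is certified to have determinant below $d$, the computation terminates and expresses $(d,n)*(0,1)$ entirely in terms of the supplied lower-determinant data, completing the induction.
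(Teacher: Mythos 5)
Your proposal follows essentially the same route as the paper's proof: decompose $\frac{n}{d}$ via its Farey parents, solve the determinant-$1$ formula of Lemma \ref{det1} for $(d,n)$, multiply by $(0,1)$, expand the inner product by the induction hypothesis, and certify the remaining products via Lemma \ref{Lemma 4.4}(1)--(2) and Lemma \ref{Lemma 4.5}(2) — exactly the paper's argument with the roles of the two parents interchanged. Two small repairs: with your normalization $n_2d_1-n_1d_2=1$ the product $(d_2,n_2)*(d_1,n_1)$ has determinant $-1$, so the powers $A^{\pm2}$ in your displayed identity must be switched (or the factors reordered); and the boundary case $1+d_2=d$ you flag never occurs, since that normalization together with $n>1$ forces $d_1\geq 2$ (a parent with $d_1=1$ would require $n_2-d_2=1$, impossible for parents of a fraction in $(0,1)$), which is the strictness the paper encodes in its assertion $0<d_1<d-1$.
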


\begin{proof}
	
	Let $n,d >1$, $gcd(n,d)=1$, and assume that the reduced fractions $\frac{n_1}{d_1}$ and $\frac{n_2}{d_2}$ ``support"
	$\frac{n}{d}$ in the Farey diagram, that is, $n=n_1+n_2$, $d=d_1+d_2$ and $|n_2d_1-n_1d_2|=1$.
		
		With our assumptions, $d_{1},d_{2}$ and $n_{1},n_{2}$ satisfy $0<d_{2}<d$, $0<d_{1}<d-1$, $0<n_{1},n_{2}$.  Then, $(d,n)*(0,1)=A^{-2(d_{1}n_{2}-d_{2}n_{1})}(d_{1},n_{1})*(d_{2},n_{2})*(0,1)-A^{-2(d_{1}n_{2}-d_{2}n_{1})}R_{d,n}(0,1)-A^{-4(d_{1}z-d_{2}n_{1})}(d_{1}-d_{2},n_{1}-n_{2})*(0,1)$.
For $(d_{1}-d_{2},n_{1}-n_{2})*(0,1)$, we know the product since the absolute value of determinant is $|d_{1}-d_{2}|<d$.  So let us focus on $(d_{1},n_{1})*(d_{2},n_{2})*(0,1)$. Assume $(d_{1},n_{1})*(d_{2},n_{2})*(0,1)=(d_{1},n_{1})*\sum_{i}u_{i}(q_{i},p_{i})$ as in Lemma \ref{Lemma 4.4} (1). By parts (1) and (2) of Lemma \ref{Lemma 4.4} and part (2) of Lemma \ref{Lemma 4.5}, we have $|n_{1}p_{i}-d_{1}p_{i}|\leq1+d_{1}<d.$																																			
\end{proof}

We summarize the above results in the following theorem which proves the correctness of the algorithm.

\begin{theorem}\label{algorithm}
	
	Consider the product $(d,n)*(0,k)$ with $0\leq n\leq d > 0$, $k\geq 0$, one can compute the product algebraically with the knowledge of  products of $(d_{1},n_{1})*(d_{2},n_{2})$ with $|d_{1}n_{2}-d_{2}n_{1}|<dk.$
	
\end{theorem}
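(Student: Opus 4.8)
The plan is to prove Theorem \ref{algorithm} by induction on the determinant $D = dk$ of the ordered pair $(d,n),(0,k)$, which by Lemma \ref{lemma 1} equals half of their geometric intersection number. The base cases $D \in \{0,1,2\}$ are exactly the initial data assembled earlier: the determinant-zero rule $(d_1,n_1)*(d_2,n_2) = (d_1+d_2,n_1+n_2)$, Lemma \ref{det1} for $D=1$, and Lemmas \ref{det21} and \ref{det22} for $D=2$. In each of these the product is already written in the basis, so nothing needs to be reduced. For the inductive step I would assume that every product of curves whose determinant is strictly smaller than $D$ can be expanded in the basis, and then treat a product $(d,n)*(0,k)$ with $dk = D \ge 3$, splitting into the non-primitive and primitive cases.

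First I would dispose of the non-primitive situations, where $\gcd(d,n) > 1$ or $k > 1$; this is precisely where Lemma \ref{multi} does the work. Writing $(d,n) = (rd',rn')$ with $\gcd(d',n') = 1$ and viewing $(0,k)$ as $k$ parallel copies of $(0,1)$, at least one of the multiplicities $r,k$ is $\ge 2$. Using the determinant-zero rule I peel a single primitive copy $\gamma \in \{(d',n'),(0,1)\}$ off the factor of larger multiplicity, so that, for instance, $(d,n)*(0,k) = (d',n')*\bigl(((r-1)d',(r-1)n')*(0,k)\bigr)$. The inner product has determinant $(r-1)d'k < D$, so by induction it expands as $\sum_i u_i(q_i,p_i)$ with central coefficients $u_i$; since the $u_i$ are central they factor out. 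Distributing $\gamma$ and invoking Lemma \ref{multi} then shows that each remaining product $\gamma*(q_i,p_i)$ has determinant $< D$ and is therefore computable by the inductive hypothesis.

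It remains to treat the primitive coprime case $\gcd(d,n)=1$, $k=1$, where $D = d$. The degenerate slopes $n \in \{0,d\}$ (which force $d=1$) and the small cases $d \le 2$ are covered by the base cases, while the family $n=1$ is given explicitly by Theorem \ref{thm:xiao}. For $d \ge 3$ and $1 < n < d$ I would invoke Theorem \ref{main} directly: writing $\frac{n}{d}$ as the mediant of its two Farey parents $\frac{n_1}{d_1},\frac{n_2}{d_2}$ with $|n_2 d_1 - n_1 d_2| = 1$, the determinant-$(\pm 1)$ relation of Lemma \ref{det1} expresses $(d,n)*(0,1)$ through $(d_1,n_1)*(d_2,n_2)*(0,1)$, the strictly smaller product $(d_1 - d_2, n_1 - n_2)*(0,1)$, and central terms. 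Applying the intersection bounds of Lemma \ref{Lemma 4.4} to the inner product $(d_2,n_2)*(0,1)$ and then the rational triangle inequality of Lemma \ref{Lemma 4.5}(2) yields $|n_1 p_i - d_1 q_i| \le 1 + d_1 < d = D$ for every resulting term, so every product occurring has determinant below $D$ and induction closes the case.

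The hard part will be exactly the last paragraph: guaranteeing that the Farey-mediant reduction strictly decreases the determinant. Unlike the torus, where the Frohman--Gelca formula closes up immediately, here the central classes $R_{*,*}$ and $y$ proliferate and the intermediate curves $(q_i,p_i)$ need not have smaller slope individually. What rescues the argument is the simultaneous use of the geometric bounds $q_i \le d$ and $|p_i d - q_i n| \le d$ from Lemma \ref{Lemma 4.4} together with the triangle inequality of Lemma \ref{Lemma 4.5}, which pin every intermediate determinant below $d$. Verifying that these bounds hold for \emph{all} terms at once, and checking that the central coefficients never reintroduce curves of large determinant, is the delicate point; the remaining Chebyshev recurrence and central-coefficient bookkeeping is routine.
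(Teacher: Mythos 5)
Your proposal is correct and follows essentially the same route as the paper: induction on the determinant $dk$ with the initial-data lemmas as base cases, Lemma \ref{multi} disposing of the cases $k>1$ and $\gcd(d,n)>1$, Theorem \ref{thm:xiao} handling $n=1$, and the Farey-mediant reduction of Theorem \ref{main} (via the bounds of Lemmas \ref{Lemma 4.4} and \ref{Lemma 4.5}) closing the primitive case $d\geq 3$, $1<n<d$. Your write-up merely makes explicit the strong induction and the choice of labeling of Farey parents (so that $d_1<d-1$, giving $1+d_1<d$) that the paper's terser case analysis leaves implicit.
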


\begin{proof}
	If $k=0$, $(d,n)*(0,0)=(d,n)$. If $k>1$, by Lemma \ref{multi}, the statement is true. Now, consider the case $k=1$.
	We can assume that $d=ms+r$, $n=s$, and we multiply $(ms+r,s)*(0,1)$ as before to get
	$$(ms+r,s)*(0,1) = A^{2(ms+r)}(ms+r,s+1) + A^{-2(ms+r)}(ms+r,s-1) + \sum_{i} u_i (q_{i},p_{i}).$$ 
	If $n=1$, we have the closed formula in Theorem \ref{thm:xiao}.\\
	If $n>1$, then\\
	(1) when $gcd(d,n)>1$, by Lemma \ref{multi}, the statement is true.\\
	(2) when $gcd(d,n)=1$ and $d\geq 3$, by Theorem \ref{main}, the statement is true.\\
	The remaining finite cases for smaller determinants have been checked in the beginning of the previous section and in Lemmas \ref{det1}, \ref{det21} and \ref{det22}.
\end{proof}

This completes the algorithm. The next theorem generalizes Lemma \ref{Lemma 4.4}.

\begin{theorem}\label{theorem 15.2} 
	Consider the product $(d,n)*(0,1)$ with $0\leq n\leq d > 0$. Then, 
	$$(d,n)*(0,1) = A^{2d}(d,n+1) + A^{-2d}(d,n-1) + \sum_i u_i(q_{i},p_{i}),$$ 
	with $0\leq q_i  \leq d-1$ and $0\leq p_i \leq n$.
	
\end{theorem}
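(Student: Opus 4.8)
The plan is to obtain the refined bounds by upgrading the coarse estimates of Lemma~\ref{Lemma 4.4} with a single extremal (tautness) input, rather than by re-running the recursion of the algorithm. I would assume $n\geq 1$ throughout; the boundary case $n=0$ (for which $\gcd(d,n)=d$) is handled separately by expanding the plain multicurve $(d,0)$ in the family $\{(m,0)_T\}$ via Proposition~\ref{property_cheby}(3)--(4) and applying the closed formula of Theorem~\ref{Lemma 14.1} to each summand: the top summand $(d,0)_T$ supplies the leading pair $A^{2d}(d,1)+A^{-2d}(d,-1)$, while every remaining contribution has $q_i\leq d-1$ and $p_i=0$.

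First I would extract everything the earlier lemmas already give. Writing $(d,n)*(0,1)=\sum_i u_i(q_i,p_i)$ with $q_i\geq 0$, Lemma~\ref{Lemma 4.4}(1),(3) yields $0\leq q_i\leq d$ and $0\leq p_i\leq n+1$, while Lemma~\ref{Lemma 4.4}(2) gives $|p_id-q_in|\leq d$. Substituting $p_i=n+1$ into the last inequality forces $n(d-q_i)\leq 0$, hence $q_i=d$; substituting $q_i=d$ forces $|p_i-n|\leq 1$, hence $p_i\in\{n-1,n,n+1\}$. Thus the only basis elements that can violate the target bounds $0\leq q_i\leq d-1$ and $0\leq p_i\leq n$ are the three curves $(d,n-1)$, $(d,n)$, $(d,n+1)$ of maximal $y$-intersection, and the theorem reduces to two assertions: that $(d,n\pm1)$ occur with coefficients exactly $A^{\pm 2d}$, and that $(d,n)$ does not occur at all.

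To prove these I would pass to the state sum. The diagram $D=(d,n)\cup(0,1)$ is in minimal position with exactly $2d$ crossings (Lemma~\ref{lemma 1}), and $(d,n)*(0,1)$ is the sum over its $2^{2d}$ Kauffman states. I would fix a parallel copy $Y'$ of the factor $(0,1)$, pushed off $D$ so that $Y'$ is disjoint from $(0,1)$ and meets $(d,n)$ transversally in $2d$ points lying away from every crossing. Since each smoothing occurs off $Y'$, every state curve $D_s$ still meets $Y'$ in these same $2d$ points, so its geometric $y$-intersection $2q_s$ satisfies $q_s\leq d$, with $q_s=d$ exactly when $D_s$ has no bigon with $Y'$. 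The crux is the tautness claim that only the all-$A$ and all-$B$ states are bigon-free with respect to $Y'$, every mixed state having $q_s\leq d-1$. Granting it, the all-$A$ state contributes the coordinate sum $(d,n+1)$ with coefficient $A^{2d}$ and the all-$B$ state the coordinate difference $(d,n-1)$ with coefficient $A^{-2d}$, each with no trivial or boundary components; since any state realizing a curve of $y$-intersection $2d$ must be bigon-free, these are the sole contributions to $(d,n\pm1)$ and no state realizes $(d,n)$. Combined with the previous paragraph, every surviving term then has $q_i\leq d-1$, so $p_i\neq n+1$ and hence $0\leq p_i\leq n$, as claimed.

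The main obstacle is the tautness claim, which I would settle with the bigon criterion. Reading the $2d$ crossings in their cyclic order along the factor $(0,1)$, a mixed state must assign opposite labels to some two cyclically consecutive crossings; the arc of $(0,1)$ joining them carries no other crossing, and smoothing its two ends oppositely caps off the intervening strand, so that this cap together with the parallel subarc of $Y'$ bounds an embedded bigon, strictly lowering the intersection with $Y'$. The delicate points to verify are that this bigon is genuinely innermost and embedded, so that the geometric (not merely transversal) intersection number drops, and that the whole argument is uniform in $\gcd(d,n)$: when $\gcd(d,n)>1$ the factor $(d,n)$ is a union of parallel strands, but the intersection estimates of Lemma~\ref{Lemma 4.4} and the bigon mechanism are unaffected, and the resulting Chebyshev-decorated multicurves obey the same coordinate bounds.
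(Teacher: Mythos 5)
For $1\leq n\leq d$ your argument is sound, and it reorganizes the paper's proof in a genuinely useful way. The paper works in Dehn coordinates and proves the two bounds by two separate test-curve arguments: for $p_i\leq n$ it deforms a horizontal $(1,0)$-type curve through any $B$-smoothed crossing, so every state with at least one $B$-smoothing is cut in at most $2n$ points; for $q_i\leq d-1$ it locates, in any mixed state, two crossings adjacent along the $(0,1)$ factor carrying opposite smoothings and passes a $(0,1)$-type curve through the resulting cap, cutting the state in at most $2(d-1)$ points --- which is exactly your bigon-with-pushoff mechanism in test-curve language (Figures \ref{fig:viacrossingBA} and \ref{fig:AB-smoothings-eps-converted-to.pdf}). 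So your tautness claim coincides with the paper's denominator step; what you do differently is the numerator bound, which you get arithmetically by pinching with Lemma \ref{Lemma 4.4}(2): $p_i=n+1$ forces $q_i=d$, and the single geometric input then kills everything at $q_i=d$ except the all-$A$ and all-$B$ states. This buys economy (one tautness argument instead of two) at the cost of the paper's byproducts (its direct argument yields $|p_i|\leq n$ without invoking Lemma \ref{Lemma 4.4}(3), and it also records the refinement $q_i\leq d-\min(n_A,n_B)$). One point worth making explicit in your bigon step: a mixed cyclic label sequence contains both an $AB$ and a $BA$ adjacency, so a cap opening toward your fixed pushoff side of $Y'$ always exists; a cap opening away from $Y'$ bounds no bigon with it. Beyond that, your "delicate points" are left at the same pictorial level as the paper's own proof.

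Your handling of $n=0$, however, is wrong, and instructively so. You claim that after expanding the plain multicurve $(d,0)=x^{d}$ in the family $\{(m,0)_T\}$, every non-top contribution has $p_i=0$; but each lower summand $(d-2j,0)_T*(0,1)$ with $j\geq 1$ contributes its own leading pair $A^{\pm 2(d-2j)}(d-2j,\pm 1)$, with $p=\pm 1$. Concretely, by Lemma \ref{det22}(1) and $(2,0)=(2,0)_T+2$,
$$(2,0)*(0,1)=A^{4}(2,1)+A^{-4}(2,-1)+(1,0)R_{1,1}+(A^{2}+A^{-2})R_{0,1}+2(0,1),$$
and the term $2(0,1)$ has $q_i=0\leq d-1$ but $p_i=1>n=0$. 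So not only does your reduction fail: the bound $p_i\leq n$ in Theorem \ref{theorem 15.2} is itself false at $n=0$ for $d\geq 2$. This is consistent with the paper, whose proof opens by assuming $n,d>0$; the hypothesis ``$0\leq n$'' in the statement overreaches, and the correct repair is to restrict to $n\geq 1$ (with $(1,0)*(0,1)$ checked directly), not to prove the $n=0$ case separately as you attempt.
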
	

\begin{proof}
	Consider a curve (or multicurve) of slope type $\frac{n}{d}$ with $n,d >0$. In Dehn coordinates\footnote{In this proof we use notation from the book \cite{P-H}; compare Figure \ref{fig:model-curves}.}  $\frac{n}{d}$ has coordinates $(2d, n)$, that is, there are $4d$ points on Dehn annulus (that is, annulus along pants curve), say $x_1,x_2,...,x_{2d}$ on one boundary component
	and $y_1,y_2,...,y_{2d}$ on the other. Consider now the diagram $\frac{n}{d} \sqcup \frac{1}{0}$ (that is the product before skein relations). 
	We can take as $\frac{1}{0}$ curve  the boundary of  pants annulus on which there are points $x_i$. Every generic horizontal (slope $\frac{0}{1})$) curve, say $h$, intersects $\frac{n}{d} \sqcup \frac{1}{0}$ in 
	$2n+2$ points. We can always deform $h$ to a curve, say $h'$, cutting the crossing, say $x_i$, and otherwise generic, and cutting $\frac{n}{d} \sqcup \frac{1}{0}$ 
	in $2n$ additional points (there are always two choices, if $x_i$ was the first then $x_{2d+1-i}$ is another). 
	If we smooth this crossing $x_i$ in $B-type$ than the diagram obtained by this smoothing is cut by $h'$ in $2n$ points. Furthermore, for any 
	crossing $x_i$ we can find its $h$ type curve. Therefore, the first part of the lemma follows for any Kauffman state of 
	$\frac{n}{d}*\frac{1}{0}$ with at least one $B$ smoothing. We find that its deformed curve $h'$ cuts any Kauffman state of $\frac{n}{d}*\frac{1}{0}$ at most $2n$ times. Thus, any curves $(q_i,p_i) = \frac{p_i}{q_i}$ satisfy $p_i \leq n$ (in fact $|p_i|\leq n$). See Figures \ref{fig:viacrossing} and \ref{fig:viacrossingBA}.
	
\begin{figure}
	\centering
	\includegraphics[width=0.3\linewidth]{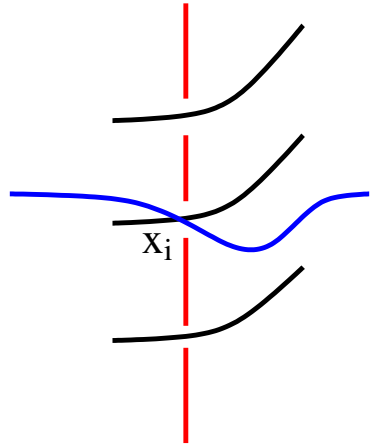}
	\caption{Passing via B-crossing.}
	\label{fig:viacrossing}
\end{figure}
	
	Similar proof works for denominator: if a Kauffman state has at least one $A$ smoothing and one $B$ smoothing then always we can find them to be neighbors 
	(along a circle) and draw $\frac{1}{0}$ curve which cuts a diagram of $\frac{n}{d} \sqcup \frac{1}{0}$ smoothed at those $A$ and $B$ crossings in no more than $2(d-1)$ points.
	Compare Figures \ref{fig:viacrossingBA} and \ref{fig: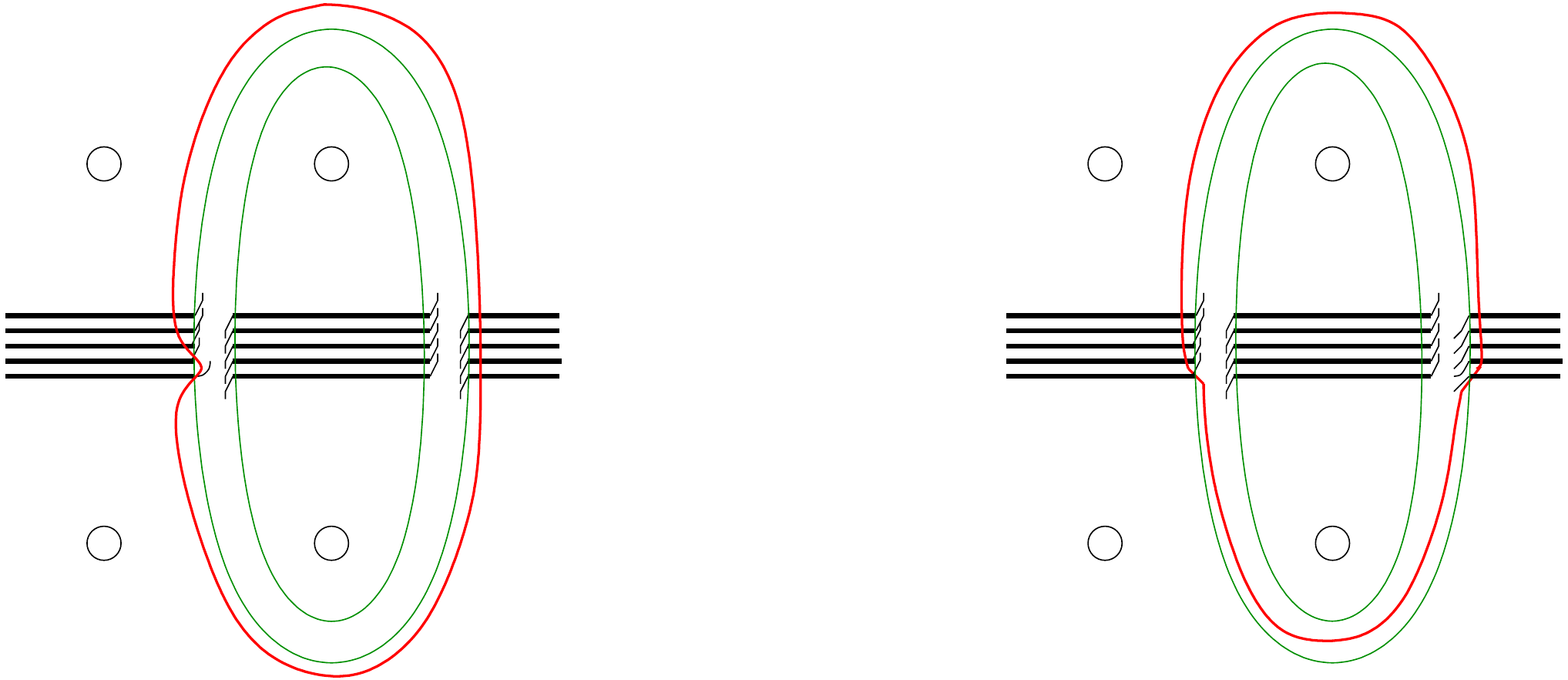}.\\
	This can be written more generally by first observing that $A$ and $B$ crossings do not have to be neighbors. Thus, we can pair $A$ and $B$ smoothings as far as some are left. 
	Thus, consider the Kauffman state with $n_A$ $A$-smoothings and $n_B=2d-n_A$ $B$-smoothings. Let $n_c$ be the minimum of these numbers. Then the corresponding $q_i$ satisfy $q_i \leq d-n_c$.	
\begin{figure}
	\centering
	\includegraphics[width=0.15\linewidth]{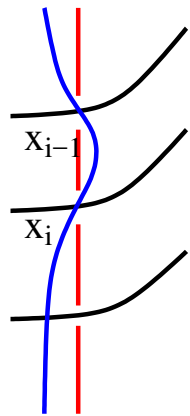}
	\caption{Passing via neighboring A and B crossings.}
	\label{fig:viacrossingBA}
\end{figure}
\end{proof} 
\begin{figure}
	\centering
	\includegraphics[width=0.4\linewidth, scale=0.5, width = 12cm]{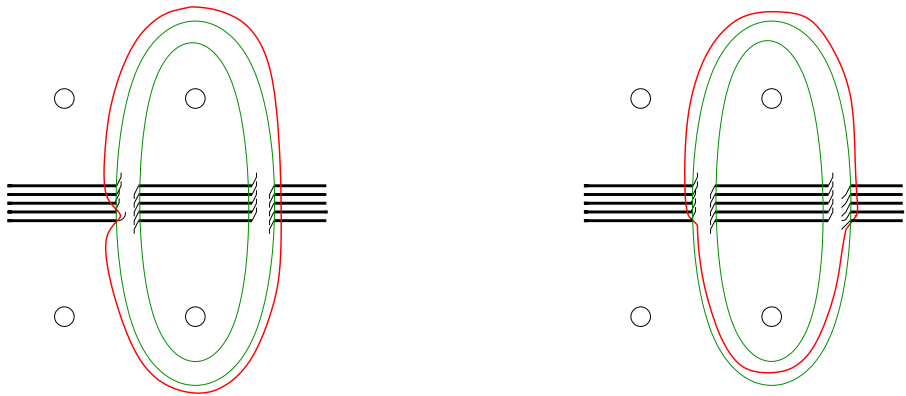}
	\caption{Examples of A-B passing.}
	\label{fig:AB-smoothings-eps-converted-to.pdf}
\end{figure}	
\begin{corollary}

Consider the product of (not necessarily reduced) positive curves $\frac{n_1}{d_1}*\frac{n_2}{d_2}$ with $d_1n_2-d_2n_1>0,$ 
	then in the skein module it can be written as 
	$$(d_1,n_1)*(d_2,n_2)= A^{2(d_1n_2-d_2n_1)}(d_1+d_2,n_1+n_2) + A^{-2(d_1n_2-d_2n_1)}(d_1-d_2,n_1-n_2) + 
	\sum_{i} u_i(q_i,p_i), $$ 
	where $0\leq p_i<n_1+n_2$ and $0\leq q_i < d_1+d_2$. 
\end{corollary}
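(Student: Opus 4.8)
The plan is to reduce the general statement to the special case already established in Theorem~\ref{theorem 15.2}, namely the product $(d,n)*(0,1)$ with $0 \leq n \leq d$. The final corollary asserts the analogous bounds $0 \leq p_i < n_1+n_2$ and $0 \leq q_i < d_1+d_2$ for the remainder terms of a general positive product $(d_1,n_1)*(d_2,n_2)$ with positive determinant. The key observation is that the leading terms $A^{2(d_1n_2-d_2n_1)}(d_1+d_2,n_1+n_2)$ and $A^{-2(d_1n_2-d_2n_1)}(d_1-d_2,n_1-n_2)$ coincide exactly with what one expects from the determinant-$\pm1$ and product-to-sum intuition, so the content of the statement is entirely in controlling the slopes of the leftover curves $(q_i,p_i)$.

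First I would set up the diagram $(d_1,n_1) \sqcup (d_2,n_2)$ before applying any skein relations, and count intersection numbers exactly as in the proof of Theorem~\ref{theorem 15.2}. By Lemma~\ref{lemma 1}, the geometric intersection number of the two multicurves is $2|d_1n_2-d_2n_1|$, and each resolved Kauffman state $(q_i,p_i)$ is obtained by smoothing all these crossings. The bound on the numerator $p_i$ would follow by choosing an auxiliary horizontal curve $h$ (a $(1,0)$-type curve) and deforming it through a crossing so that it cuts each Kauffman state in a controlled number of points: every such state with at least one smoothing of each type is cut by $h'$ in fewer points than the combined diagram, which forces $p_i < n_1+n_2$. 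Symmetrically, using a vertical $(0,1)$-type curve controls the denominator $q_i < d_1+d_2$. This is exactly the two-sided argument (numerator via $B$-crossings, denominator via neighboring $A$ and $B$ crossings) already carried out for the base case, now applied to a product of two arbitrary positive slopes rather than one slope against $(0,1)$.

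Concretely, I would either (a) invoke the action of $PSL(2,\mathbb{Z})$ described in Section~\ref{SecAlgorithm} to transport the pair $(d_1,n_1),(d_2,n_2)$ to a normalized position and reduce to the Dehn-coordinate computation already done, or (b) run the intersection-counting argument of Theorem~\ref{theorem 15.2} verbatim with $(0,1)$ replaced by the second curve $(d_2,n_2)$. Approach (a) is cleaner because the mapping class group action preserves geometric intersection numbers and hence preserves the slope bounds, while permuting the leading terms in a predictable way; one then only needs to check that the normalized pair falls under the hypotheses of Theorem~\ref{theorem 15.2} or its immediate generalization.

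The main obstacle I anticipate is that the positivity and strict inequalities ($0 \leq p_i$ rather than merely $|p_i| \leq n_1+n_2$, and the strictness at the upper end) require care about \emph{orientation} and about which boundary vertex the core curves emanate from. In Theorem~\ref{theorem 15.2} the lower bound $p_i \geq 0$ came from the hypothesis $n > 0$ together with the convexity of slopes under smoothing; here I must verify that no Kauffman state produces a curve whose slope exceeds the mediant slope of the two factors or reverses direction. Establishing this cleanly is the delicate point, and I expect it to rest on the observation that $B$-type (respectively $A$-type) smoothings only decrease the relevant intersection count, so the extreme slopes $(d_1+d_2,n_1+n_2)$ and $(d_1-d_2,n_1-n_2)$ are attained only by the all-$A$ and all-$B$ states, leaving every genuinely mixed state strictly inside the stated range.
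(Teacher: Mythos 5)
Your preferred route (a) is in fact the route the paper takes, but the sentence you use to justify its key step --- that ``the mapping class group action preserves geometric intersection numbers and hence preserves the slope bounds'' --- is false as stated, and it sits exactly where the content of the corollary lies. A mapping class preserves the intersection number of the two factor curves with each other (hence the determinant and the exponents $A^{\pm 2(d_1n_2-d_2n_1)}$), but it does \emph{not} preserve intersection numbers with the coordinate axes, which is what the bounds $0\leq p_i<n_1+n_2$ and $0\leq q_i<d_1+d_2$ measure; if it did, every product would have the same remainder slopes as $(d,n)*(0,1)$. What the paper actually does is write the given pair as the image of $\bigl((d,n),(0,1)\bigr)$, with $d=d_1n_2-d_2n_1$, under a matrix with \emph{nonnegative} entries (upper half of the Farey diagram),
\[
\begin{pmatrix} n_2 & a_{1,2}\\ d_2 & a_{2,2}\end{pmatrix}
\begin{pmatrix} n & 1\\ d & 0\end{pmatrix}
=
\begin{pmatrix} n_1 & n_2\\ d_1 & d_2\end{pmatrix},
\]
apply the induced algebra automorphism to the expansion of $(d,n)*(0,1)$ given by Theorem \ref{theorem 15.2}, and push the bounds forward \emph{linearly}: from $0\leq p_i'\leq n$ and $0\leq q_i'\leq d-1$ one gets $p_i=n_2p_i'+a_{1,2}q_i'\leq n_2n+a_{1,2}(d-1)=n_1-a_{1,2}$ and $q_i=d_2p_i'+a_{2,2}q_i'\leq d_1-a_{2,2}$, while nonnegativity of the matrix entries gives $p_i,q_i\geq 0$. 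So the bounds are transformed, not preserved, and the nonnegativity of the conjugating matrix --- which you never invoke --- is precisely what delivers the positivity and the upper bounds. (The paper, for its part, simply \emph{assumes} the pair arises this way; your remark that one must ``check that the normalized pair falls under the hypotheses'' corresponds to that assumption.)

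A consequence is that your closing paragraph, worrying about orientations and about extreme Kauffman states in order to secure $p_i\geq 0$ and strictness, is solving a problem that route (a) dissolves: positivity and strictness are inherited from Theorem \ref{theorem 15.2} through the nonnegative linear change of coordinates, with no new state-by-state analysis required. Your route (b) --- rerunning the $h'$-curve intersection count of Theorem \ref{theorem 15.2} against $(d_2,n_2)$ in place of $(0,1)$ --- is a plausible alternative, but it is not what the paper does, and the delicate points you flag there (that mixed states stay strictly inside the range) are genuine and left unproved in your sketch; the matrix computation above avoids them entirely.
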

	
	\begin{proof}
		Assume that the positive curves $(d_1,n_1), (d_2,n_2)$ are obtained from the pair $(d,n)*(0,1)$ by the action of  a nonnegative matrix of $SL(2,Z)$ (upper half of the Farey diagram). In other words, there exist $n_1$  and $d_1$ so that

\[\left[
 \begin{array}{cc}
	 n_2 &  a_{1,2}\\
	 d_2 &  a_{2,2}
 \end{array} \right]
      \left[
  \begin{array}{cc}
      n & 1 \\
      d & 0 
\end{array} \right]
     =\left[
 \begin{array}{cc}
   n_1 & n_2 \\
   d_1 & d_2 
\end{array} \right].
   \]
The condition (coming from Theorem \ref{theorem 15.2}\footnote{Let $(d,n)*(0,1)= A^{2d}(d,n+1) + A^{-2d}(d,n-1) + \sum_i u_i(q'_i,p'_i). $ According to Theorem \ref{theorem 15.2}, $0\leq p_i' \leq n$ and $0\leq q_i' \leq d-1$.  Therefore, $p_i= n_2p_i'+a_{1,2}q_i' \leq n_2n + a_{1,2}(d-1)=n_1-a_{1,2}$ and $q_i= d_2p_i' + a_{2,2}q_i'\leq d_2n+ a_{2,2}(d-1)=d_1-a_{2,2}$.}) is now: $0\leq p_i \leq n_1-a_{1,2}$, $0\leq q_i \leq d_1-a_{2,2}$.
		\end{proof}
		
The following result can be thought as a stronger version of Lemma \ref{Lemma 4.5} (2).
		
\begin{corollary}

Consider the curves $(d_{1},n_{1})$ and  $(d_{2},n_{2})$ ($d_{2}\geq 0$) and let $(d_{2},n_{2})*(0,1)=A^{2d_{2}}(d_{2},n_{2}+1)+A^{-2d_{2}}(d_{2},n_{2}-1)\sum_i w_i(q_{i},p_{i})$.  If $|n_2d_1-n_1d_2|=1$, then
		$$|n_1q-d_1p| \leq d_1.$$

\end{corollary}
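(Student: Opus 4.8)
The plan is to read the triangle inequality of Lemma \ref{Lemma 4.5}(1) in the reverse direction. The curves $(q_i,p_i)$ occurring in the sum arise from the product $(d_2,n_2)*(0,1)$, so their ``distance'' to the slope $\frac{n_2}{d_2}$ is already controlled by the earlier estimates; since $\frac{n_1}{d_1}$ is a Farey neighbour of $\frac{n_2}{d_2}$ (this is exactly the hypothesis $|n_2d_1-n_1d_2|=1$), the distance from $(q_i,p_i)$ to $\frac{n_1}{d_1}$ can be only slightly larger, and integrality of $|n_1q-d_1p|$ will absorb the slack.

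First I would dispose of the degenerate cases. If $d_2=0$ the hypothesis $|n_2d_1-n_1d_2|=1$ forces $(d_2,n_2)*(0,1)$ to have no lower-order terms, so the claim is vacuous, and we may assume $d_2\ge 1$. For a term with $q_i=0$, Lemma \ref{Lemma 4.4}(2) applied to $(d_2,n_2)$ gives $d_2 p_i=|n_2 q_i-d_2 p_i|\le d_2$, hence $p_i\le 1$ and $|n_1 q_i-d_1 p_i|=d_1 p_i\le d_1$, as required.

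For the terms with $q_i>0$, I would invoke the results of the previous pages applied to the curve $(d_2,n_2)$: Lemma \ref{Lemma 4.4}(2) gives $|n_2 q_i-d_2 p_i|\le d_2$, while Theorem \ref{theorem 15.2} separates off the two leading terms and guarantees that the remaining summands satisfy $q_i\le d_2-1$. The triangle inequality
$$\left|\frac{n_1}{d_1} - \frac{p_i}{q_i}\right| \le \left|\frac{n_1}{d_1} - \frac{n_2}{d_2}\right| + \left|\frac{n_2}{d_2} - \frac{p_i}{q_i}\right|,$$
multiplied through by $d_1 q_i$ and using $|n_1 d_2-n_2 d_1|=1$, then yields
$$|n_1 q_i - d_1 p_i| \le \frac{q_i}{d_2} + \frac{d_1}{d_2}\,|n_2 q_i - d_2 p_i| \le \frac{d_2-1}{d_2} + d_1 < d_1+1.$$

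The main obstacle is precisely making this last inequality strict. The cruder bound $q_i\le d_2$ from Lemma \ref{Lemma 4.4}(1) only gives $|n_1 q_i-d_1 p_i|\le d_1+1$, which is insufficient; it is the refinement $q_i\le d_2-1$ supplied by Theorem \ref{theorem 15.2} that pushes the estimate strictly below $d_1+1$. Since $|n_1 q_i-d_1 p_i|$ is a non-negative integer, any value strictly less than $d_1+1$ is at most $d_1$, which is the assertion. I would close by remarking that the two leading terms $(d_2,n_2\pm 1)$ are genuinely excluded: for the sign dictated by $n_1d_2-n_2d_1=\pm 1$, one of them satisfies $|n_1 d_2-d_1(n_2\pm 1)|=d_1+1$, so the bound $\le d_1$ holds only for the lower-order terms $(q_i,p_i)$, consistent with the statement.
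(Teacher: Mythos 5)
Your proposal is correct and follows essentially the same route as the paper's own proof: Theorem \ref{theorem 15.2} supplies $q_i\le d_2-1$, Lemma \ref{Lemma 4.4}(2) supplies $|n_2q_i-d_2p_i|\le d_2$, and Lemma \ref{Lemma 4.5}(1) with the roles of the two fractions interchanged gives $|n_1q_i-d_1p_i|<d_1+1$, hence $\le d_1$. Your explicit appeal to integrality (together with the disposal of the cases $d_2=0$ and $q_i=0$, where the triangle inequality's positive-denominator hypothesis would fail) in fact cleans up the paper's final display, whose chain ``$<1+d_1\le d_1$'' is a typo for exactly this integrality step.
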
	

\begin{proof}

By Theorem \ref{theorem 15.2}, we have $0\leq q_{i}\leq d_{2}-1$.  From Lemma \ref{Lemma 4.4} (2) and Lemma \ref{Lemma 4.5} (1), we have 
$$|n_1q_{i}-d_1p_{i}| \leq \frac{q}{d_2}|n_2d_1-n_1d_2| + \frac{d_1}{d_2}|n_2q-d_2p|<1+d_{1}\leq d_{1}.$$ 

\end{proof}

\subsection{Motivation for Future Work }

In the future we plan to compare our result with the study of positive basis for surface skein algebras (compare with \cite{Thu}).    The third author heard about the positivity problem at the Aspen Physics Conference in March 2018 from Paul Wedrich. He heard about it from Dylan Thurston, who in turn had been motivated by a question of Edward Witten.  We ask a similar question for the KBSA of $F_{0,4} \times I$.

\begin{conjecture}\label{witten}\
\begin{enumerate}
\item[(1)] If $(d_2,n_2)_T * (d_1,n_1)_T = \sum_i w_i(q_i,p_i)_T,$
where $w_i\in K_0,$ then $w_i$ is a nonnegative polynomial in variables $A,R_{0,1},R_{1,1},R_{1,0},y$.
\item[(2)] If $(d_2,n_2)_T * (d_1,n_1)_T=A^{n_1d_2-n_2d_1}(d_1+d_2,n_1+n_2)_T + A^{-n_1d_2+n_2d_1}(d_1-d_2,n_1-n_2)_T +
\sum_i v_i(q_i,p_i)_S$, with $v_i\in K_0$, then $v_i$ are nonnegative polynomials in variables $A,R_{0,1},R_{1,1},R_{1,0},y$.
\end{enumerate}
\end{conjecture}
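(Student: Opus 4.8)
The plan is to prove Conjecture \ref{witten} by induction on the determinant $D = |d_1 n_2 - d_2 n_1|$, exploiting the reduction already built into the algorithm of Section \ref{SecAlgorithm}. First I would record that the $PSL(2,\mathbb Z)$-action preserves the desired form of the coefficients: its generators $s_1,s_2$ merely permute $R_{0,1}, R_{1,0}, R_{1,1}$ among themselves and fix both $y$ and $A$, so a coefficient is a nonnegative polynomial in $A, R_{0,1}, R_{1,0}, R_{1,1}, y$ if and only if its image under the action is. Hence it suffices to treat the normal form $(d,n)*(0,1)$ with $0 \le n < d$ produced by Step~1 of the algorithm, after the $T$-decorations on the inputs have been expanded into undecorated multicurves by the Chebyshev product-to-sum of Proposition \ref{property_cheby}. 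The base cases $D \le 2$ are then exactly Lemmas \ref{det1}, \ref{det21}, \ref{det22} together with the $D=0$ computation, all of which display manifestly nonnegative coefficients, and the closed forms of Theorem \ref{Lemma 14.1} and Theorem \ref{thm:xiao} supply two further infinite families of positive (indeed unimodal) base data.

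For the inductive step I would use the Farey recursion of Theorem \ref{main}: choosing the triangle $\tfrac{n_1}{d_1}, \tfrac{n_2}{d_2}$ that supports $\tfrac{n}{d}$ with $|n_2 d_1 - n_1 d_2| = 1$, one writes $(d,n)*(0,1)$ as $A^{\pm 2}(d_1,n_1)*\big[(d_2,n_2)*(0,1)\big]$ minus $A^{\pm 4}(d_1-d_2,n_1-n_2)*(0,1)$ minus $A^{\pm 2}R_{d,n}(0,1)$. The leading term is controlled by the induction hypothesis: since $d_2 < d$, one has $(d_2,n_2)*(0,1) = \sum_i u_i(q_i,p_i)$ with nonnegative $u_i \in K_0$, and the determinant bounds of Lemmas \ref{lemma 1}, \ref{Lemma 4.4}, \ref{Lemma 4.5} and \ref{multi} force $|n_1 q_i - d_1 p_i| \le 1 + d_1 < d$, so each remultiplication $(d_1,n_1)*(q_i,p_i)$ again has determinant strictly below $D$ and hence nonnegative coefficients by induction. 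Thus every ingredient on the right-hand side is individually a nonnegative combination of basis multicurves, with center coefficients in $K_0$ --- except that two of them enter with a minus sign.

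The main obstacle is precisely this cancellation: showing that after collecting terms the net coefficient of each basis element $(q_i,p_i)_T$ is still nonnegative, even though the recursion subtracts two positive quantities. This is the same phenomenon as Laurent positivity in cluster algebras, where manifestly subtractive mutation rules nonetheless yield positive expansions, and I would attack it the same way: by replacing the bare induction hypothesis with a manifestly positive model. Concretely, I would try to express each $w_i$ as a weighted enumeration of Kauffman states of the diagram $(d,n) \sqcup (0,1)$, organized along the Dehn/billiard trajectories of Figure \ref{pillowcase} and Figure \ref{fig:model-curves}, in which trivial circles contribute $d = -A^2 - A^{-2}$ and boundary-parallel circles contribute the $a_i$ grouped into the symmetric combinations $R_{0,1}, R_{1,0}, R_{1,1}, y$. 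The two subtracted terms in the Farey recursion should then match exactly the over-counted states, so that the difference is itself a nonnegative state count; the appearance of the quantum integers $[n]_{A^4}$ and the unimodal coefficients in Theorem \ref{thm:xiao} is strong evidence that such a model exists. I would also strengthen the inductive statement to carry unimodality, since that extra rigidity is often what makes the cancellation bookkeeping close up.

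Finally, part (2) is linked to part (1) by the change of basis between the two kinds of Chebyshev polynomials, with the subtlety that $T_r = S_r - S_{r-2}$ (Proposition \ref{property_cheby}(3)) is subtractive while $S_r = \sum_{i\ge 0} T_{r-2i} + \alpha(r)$ (Proposition \ref{property_cheby}(4)) is not. Because positivity is preserved only in one conversion direction, I would not deduce (2) from (1) after the fact but instead run the induction simultaneously for both bases, propagating whichever positive conversion applies at each step. The determinant bookkeeping is identical in the two cases, so the only additional work is tracking the Chebyshev conversion through the same state-sum model, and the hard core of the argument remains the single cancellation estimate described above.
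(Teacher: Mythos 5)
First, a point of framing: the statement you set out to prove is stated in the paper as Conjecture \ref{witten}, not as a theorem. The paper offers no proof at all --- only supporting evidence, namely the manifestly positive coefficients in the initial data (Lemmas \ref{det1}, \ref{det21}, \ref{det22}) and in the closed formulas of Theorems \ref{Lemma 14.1} and \ref{thm:xiao}. So your proposal cannot be ``essentially the same as the paper's proof''; the relevant question is whether it closes the problem, and it does not.

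You have correctly identified the obstruction yourself, and it is precisely where your argument stops being a proof. The induction on the determinant $D=|d_1n_2-d_2n_1|$, the $PSL(2,\mathbb Z)$-equivariance of the set $\{A,R_{0,1},R_{1,0},R_{1,1},y\}$, and the determinant bounds from Lemmas \ref{lemma 1}, \ref{multi}, \ref{Lemma 4.4} and \ref{Lemma 4.5} reproduce the correctness argument of Theorem \ref{algorithm}, but that argument only shows the product is \emph{computable} from smaller determinants, not that positivity survives: the Farey recursion in Theorem \ref{main} subtracts the terms $A^{\pm4}(d_1-d_2,n_1-n_2)*(0,1)$ and $A^{\pm2}R_{d,n}(0,1)$, so nonnegativity of the ingredients gives no control over the signs of the collected coefficients. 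Your proposed repair --- a manifestly positive state-sum model in which the subtracted terms ``match exactly the over-counted states'' --- is never constructed, and constructing it is essentially the whole content of the conjecture. Note also that the most naive candidate fails at the outset: in the Kauffman state sum a trivial circle contributes $-A^2-A^{-2}$, the Chebyshev decorations enter through the subtractive recurrence $T_n=xT_{n-1}-T_{n-2}$, and the reduction to multicurves with no trivial components in Theorem \ref{TeoBasisFxI} already involves signs, so a weighted enumeration of Kauffman states is not ``manifestly nonnegative'' without a nontrivial sign-cancelling bijection or a change of variables --- exactly the kind of structure whose existence is the open problem (compare the discussion of positive bases in \cite{Thu}). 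Your remarks on the relation between parts (1) and (2) via Proposition \ref{property_cheby} are sound --- since $S_n=\sum_i T_{n-2i}+\alpha(n)$ has nonnegative coefficients, positivity in the $S$-normalization implies it in the $T$-normalization, which is why the paper calls (2) stronger than (1) --- but this does not repair the inductive step. In short: your write-up is a reasonable research program, with the base cases and the reduction steps valid, but the cancellation estimate at its core is asserted rather than proved, so the conjecture remains open after your argument just as it does in the paper.
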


Notice that by Proposition \ref{property_cheby} (4), Conjecture \ref{witten}, (2) is stronger than (1).  All our computations and the formulas in Theorem \ref{Lemma 14.1} and \ref{thm:xiao} support the former conjecture.

\begin{question}
\begin{enumerate}\
	\item {What is  the complexity of multiplying fractions in the KBSA of $F_{0,4} \times I$?}
	\item{Is the complexity of the former algorithm quasi-polynomial in terms of the determinant of the curves being multiplied (that is, of complexity $det^{log(det)}$ where det is the determinant of a pair of fractions)? }
\end{enumerate}	
\end{question}	
	
\section*{Appendix: Proof of Theorem \ref{thm:xiao}}
  
Before we prove Theorem \ref{thm:xiao}, we provide one instructive example $(7,1)*(0,1)$.

$$(7,1)*(0,1)= A^{14}(7,2)_T + A^{-14}(7,0)_T + $$ 
$$y(A^{-10}(5,0)_S + A^{-2}(3,0)_S + A^6(1,0)) +$$
$$A^2[R_{1,1}(6,1) + [2]_{A^4}R_{0,1}(5,1) + [3]_{A^4}R_{1,1}(4,1) + [3]_{A^4}R_{0,1}(3,1) + [2]_{A^4}R_{1,1}(2,1)+R_{0,1}(1,1)]+$$
$$R_{1,0}[A^{12} + A^4(2,0)_S + A^{-4}(4,0)_S + A^{-12}(6,0)_S] +$$
$$(R_{1,0}+ R_{1,1}R_{0,1})[A^8+ (2,0)_S + A^{-8}(4,0)_S] +$$
$$(R_{1,1}^2+R_{0,1}^2)[A^2(1,0) + A^{-6}(3,0)_S + 2A^{-2}(1,0) ] + $$
$$+5R_{0,1}R_{1,1} + 3R_{0,1}R_{1,1}[A^{-4}(2,0)_S +A^4].$$

\begin{thm:xiao}
	
Let $n$ be a non-negative integer. Then, 
$$(n,1)*(0,1)=A^{2n}(n,2)_T + A^{-2n}(n,0)_T + \sum_{i=0}^{n-1}\alpha_{n-1}\beta_i +A^2\sum_{i=1}^{n-1}[n_i]_{A^4} \frac{1}{i}R_{i-1,1},$$
where
\[
n_i=
\left\{
\begin{array}{ll}
\left[i\right]_{A^4} &  \mbox{for $ i\leq \frac{n}{2}, $}\\
\left[n-i\right]_{A^4} & \mbox{for $i\geq \frac{n}{2},$}
\end{array}
\right.
\]

\begin{minipage}[t]{0.45\textwidth}
	$$ \alpha_{i}=\left\{
		\begin{array}{rcl}
		R_{1,0}       &      & {i=1,}\\
		y     &      & {i=2,}\\
		R_{0,1}R_{1,1}+R_{1,0}     &      & {i=3,}\\
		\frac{(i-2)}{2}(R_{0,1}^{2}+R_{1,1}^{2})       &      & {i>3  \mbox{ even,}}\\
		(i-2)R_{0,1}R_{1,1}       &      & {i>3 \mbox{ odd, }} 
		\end{array} \right. $$
\end{minipage}
\begin{minipage}[t]{0.45\textwidth}
	$$\mbox {and } \ \beta_{i}=\left\{
		\begin{array}{lll}
		\sum\limits_{j=0}^{(i-1)/2} A^{-2i+8j}(i-2j,0)_{S}      &      & {i  \mbox{ odd,}}\\
		\sum\limits_{j=0}^{i/2} A^{-2i+8j}(i-2j,0)_{S}        &      & {i \mbox{ even}}.
		\end{array} \right. $$
\end{minipage}
\ \\

Notice that all the coefficients are positive and unimodal.  

\end{thm:xiao}

\begin{spacing}{2}
	
\begin{proof}
	
	Clearly the formula is true for $n=0,1,2$ using initial data.  Assume the formula is true for $n<N$. We prove the case $n=N$ by induction. First, consider the case when $N$ is even. Then, $(n,1)*(0,1)=A^{-2}(1,0)*(n-1,1)*(0,1)-A^{-2}R_{n,1}(0,1)-A^{-4}(n-2,1)*(0,1).$  Now, we consider each term separately.
	
	The simpler one is the second product:
	
	$(n-2,1)*(0,1)=A^{2(n-2)}(n-2,2)_{T}+A^{-2(n-2)}(n-2,0)_{T}+\sum\limits_{i=1}^{(n-4)/2}A^{2}[i]_{A^4}R_{i,1}(i,1)+\\ A^{2}[(n-2)/2]_{A^4}R_{(n-2)/2,1}((n-2)/2,1)+\sum\limits_{i=n/2}^{n-3}A^{2}[n-2-i]_{A^4}R_{i,1}(i,1)+\sum\limits_{i=0}^{n-3}\alpha_{n-2-i}\beta_{i}.$
	
	Now we continue with the computation of the first product of the triple.  By the induction hypothesis, we know the product of $(n-1,1)*(0,1)$. Thus,  $(1,0)*(n-1,1)*(0,1)=A^{2(n-1)}(1,0)*(n-1,2)_{T}+\\ A^{-2(n-1)}(1,0)*(n-1,0)_{T}+\sum\limits_{i=1}^{(n-2)/2}A^{2}[i]_{A^4}R_{i-1,1}(1,0)*(i,1)+\sum\limits_{i=(n-2)/2+1}^{n-2}A^{2}[n-1-i]_{A^4}R_{i-1,1}(1,0)*(i,1)+\\
	\sum\limits_{i=0}^{n-2}\alpha_{n-1-i}(1,0)*\beta_{i}=A^{2(n-1)}[A^{4}(n,2)_{T}+A^2R_{n/2,1}(n/2,1)+y+A^{-2}R_{(n-2)/2,1}((n-2)/2,1)+\\ A^{-4}(n-2,2)_{T}]+A^{-2(n-1)}(1,0)*(n-1,0)_{T}+\sum\limits_{i=1}^{(n-2)/2}A^{2}[i]_{A^4}R_{i-1,1}(1,0)*(i,1)+\\
	\sum\limits_{i=(n-2)/2+1}^{n-2}A^{2}[n-1-i]_{A^4}R_{i-1,1}(1,0)*(i,1)+\sum\limits_{i=0}^{n-2}\alpha_{n-2-i}(1,0)*\beta_{i}.$
	
	Combining the two parts, we have
	
	$(n,1)*(0,1)=A^{-2}(1,0)*(n-1,1)*(0,1)-A^{-2}R_{n,1}(0,1)-A^{-4}(n-2,1)*(0,1)=A^{2n}(n,2)_{T}+A^{2n-2}R_{n/2,1}(n/2,1)+A^{2n-4}y+A^{2n-6}R_{(n-2)/2,1}((n-2)/2,1)+A^{2n-8}(n-2,2)_{T}+{\color{red}A^{-2n}(1,0)*(n-1,0)_{T}}+{\color{green}\sum\limits_{i=1}^{(n-2)/2}[i]_{A^4}R_{i-1,1}[A^{2}(i+1,1)+R_{i+1,1}+A^{-2}(i-1,1)]}+\\{\color{blue}\sum\limits_{i=(n-2)/2+1}^{n-2}[n-1-i]_{A^4}R_{i-1,1}[A^{2}(i+1,1)+R_{i+1,1}+A^{-2}(i-1,1)]}+\\
	\sum\limits_{i=0}^{n-2}A^{-2}\alpha_{n-1-i}(1,0)*\beta_{i}-A^{2(n-4)}(n-2,2)_{T}-{\color{red}A^{-2n}(n-2,0)_{T}}-\sum\limits_{i=1}^{(n-4)/2}A^{-2}[i]_{A^4}R_{i,1}(i,1)-\\A^{-2}[(n-2)/2]_{A^4}R_{(n-2)/2,1}((n-2)/2,1)-\sum\limits_{i=n/2}^{n-3}A^{-2}[n-2-i]_{A^4}R_{i,1}(i,1)-\sum\limits_{i=0}^{n-3}A^{-4}\alpha_{n-2-i}\beta_{i}-A^{-2}R_{n,1}(0,1)\\
	=A^{2n}(n,2)_{T}+{\color{red}A^{-2n}((1,0)*(n-1,0)_{T}-(n-2,0)_{T})}+A^{2n-2}R_{n/2,1}(n/2,1)+A^{2n-4}y+\\ A^{2n-6}R_{(n-2)/2,1}((n-2)/2,1)+A^{2n-8}(n-2,2)_{T}+
	{\color{green} [ \sum\limits_{i=1}^{(n-4)/2}A^{2}[i]_{A^4}R_{i+1,1}(i+1,1)+}\\ {\color{green} 
	A^{2}[(n-2)/2]_{A^{4}}R_{n/2,1}(n/2,1)]+\sum\limits_{i=1}^{(n-2)/2}[i]_{A^4}R_{i-1,1}R_{i+1,1}+[\sum\limits_{k=1}^{(n-4)/2}[k+1]_{A^4}R_{k,1}A^{-2}(k,1)+}\\ {\color{green}
	A^{-2}R_{0,1}(0,1)]}+{\color{blue}\sum\limits_{k=(n+2)/2}^{n-1}[n-k]_{A^4}R_{k,1}A^{2}(k,1)+	\sum\limits_{i=n/2}^{n-2}[n-1-i]_{A^4}R_{i-1,1}R_{i+1,1}+}\\
	{\color{blue}\sum\limits_{k=(n-2)/2}^{n-3}A^{-2}[n-2-k]_{A^4}R_{k,1}(k,1)}+\sum\limits_{i=0}^{n-2}A^{-2}\alpha_{n-1-i}(1,0)*\beta_{i}-A^{2n-8}(n-2,2)_{T}-\sum\limits_{i=1}^{(n-4)/2}A^{-2}[i]_{A^4}R_{i,1}(i,1)-A^{-2}[(n-2)/2]_{A^4}R_{(n-2)/2,1}((n-2)/2,1)-\sum\limits_{i=n/2}^{n-3}A^{-2}[n-2-i]_{A^4}R_{i,1}(i,1)-\sum\limits_{i=0}^{n-3}A^{-4}\alpha_{n-2-i}\beta_{i}-A^{-2}R_{n,1}(0,1).$
	
	By the recursive relation of Chebyshev polynomial, we have $(n,1)*(0,1)=\\
	A^{2n}(n,2)_{T}+A^{-2n}(n,0)_{T}+{\color{green}A^{2n-2}R_{n/2,1}(n/2,1)}+A^{2n-4}y+A^{2n-6}R_{(n-2)/2,1}((n-2)/2,1)+{\color{blue}A^{2n-8}(n-2,2)_{T}}+[{\color{red}\sum\limits_{i=1}^{(n-4)/2}A^{2}[i]_{A^4}R_{i+1,1}(i+1,1)}+{\color{green}A^{2}[(n-2)/2]_{A^{4}}R_{n/2,1}(n/2,1)}]+\sum\limits_{i=1}^{(n-2)/2}[i]_{A^4}R_{i-1,1}R_{i+1,1}+\\ {\color{darkgreen}[\sum\limits_{k=1}^{(n-4)/2}[k+1]_{A^4}R_{k,1}A^{-2}(k,1)}
	+{\color{blue}A^{-2}R_{0,1}(0,1)}]+{\color{cyan}\sum\limits_{k=(n+2)/2}^{n-1}[n-k]_{A^4}R_{k,1}A^{2}(k,1)}+\\ \sum\limits_{i=n/2}^{n-2}[n-1-i]_{A^4}R_{i-1,1}R_{i+1,1}+
	{\color{blue}\sum\limits_{k=(n-2)/2}^{n-3}A^{-2}[n-2-k]_{A^4}R_{k,1}(k,1)}+ \sum\limits_{i=0}^{n-2}A^{-2}\alpha_{n-1-i}(1,0)*\beta_{i}-\\ {\color{blue}A^{2n-8}(n-2,2)_{T}}-{\color{darkgreen}\sum\limits_{i=1}^{(n-4)/2}A^{-2}[i]_{A^4}R_{i,1}(i,1)}{\color{blue}-A^{-2}[(n-2)/2]_{A^4}R_{(n-2)/2,1}((n-2)/2,1)-} \\ {\color{blue} \sum\limits_{i=n/2}^{n-3}A^{-2}[n-2-i]_{A^4}R_{i,1}(i,1)}-\sum\limits_{i=0}^{n-3}A^{-4}\alpha_{n-2-i}\beta_{i}-{\color{blue}A^{-2}R_{n,1}(0,1)}.$
	
	The blue terms cancel in pairs and we have $(n,1)*(0,1)=$ \\
	$A^{2n}(n,2)_{T}+A^{-2n}(n,0)_{T}+{\color{green}A^{2}[A^{2n-4}+[(n-2)/2]_{A^{4}}]R_{n/2,1}(n/2,1)}+A^{2n-4}y +$ \\ 
	$A^{2n-6}R_{(n-2)/2,1}((n-2)/2,1)+ {\color{red}[\sum\limits_{k=2}^{(n-4)/2}A^{2}[k-1]_{A^4}R_{k,1}(k,1)+A^{2}[n/2-2]_{A^{4}}R_{(n-2)/2,1}((n-2)/2,1)]}+$ \\ $\sum\limits_{i=1}^{(n-2)/2}[i]_{A^4}R_{i-1,1}R_{i+1,1}+\sum\limits_{i=n/2}^{n-2}[n-1-i]_{A^4}R_{i-1,1}R_{i+1,1}+$
	${\color{darkgreen}[\sum\limits_{k=1}^{(n-4)/2}A^{-2}[k+1]_{A^4}R_{k,1}(k,1)-} \\
	{\color{darkgreen}\sum\limits_{i=1}^{(n-4)/2}A^{-2}[i]_{A^4}R_{i,1}(i,1)]}+{\color{cyan}\sum\limits_{k=(n+2)/2}^{n-1}A^{2}[n-k]_{A^4}R_{k,1}(k,1)}+{\color{blue}[\sum\limits_{k=(n-2)/2}^{n-3}A^{-2}[n-2-k]_{A^4}R_{k,1}(k,1)-} \\ {\color{blue}\sum\limits_{i=n/2}^{n-3}A^{-2}[n-2-i]_{A^4}R_{i,1}(i,1)-A^{-2}[n-2/2]_{A^4}R_{(n-2)/2,1}((n-2)/2,1)]}+\\ \sum\limits_{i=0}^{n-2}A^{-2}\alpha_{n-1-i}(1,0)*\beta_{i}-\sum\limits_{i=0}^{n-3}A^{-4}\alpha_{n-2-i}\beta_{i}=
	A^{2n}(n,2)_{T}+A^{-2n}(n,0)_{T}+{\color{green}A^{2}[n/2]_{A^{4}}R_{n/2,1}(n/2,1)}+\\ A^{2n-4}y+A^{2n-6}R_{(n-2)/2,1}((n-2)/2,1)+[\sum\limits_{k=2}^{(n-4)/2}A^{2}[k-1]_{A^4}R_{k,1}(k,1)+A^{2}[n/2-2]_{A^{4}}R_{(n-2)/2,1}((n-2)/2,1)]
	+\\\sum\limits_{i=1}^{(n-2)/2}[i]_{A^4}R_{i-1,1}R_{i+1,1}+\sum\limits_{i=n/2}^{n-2}[n-1-i]_{A^4}R_{i-1,1}R_{i+1,1}+{\color{darkgreen}\sum\limits_{k=1}^{(n-4)/2}A^{2}A^{4k-4}R_{k,1}(k,1)}+\\ \sum\limits_{k=(n+2)/2}^{n-1}A^{2}[n-k]_{A^4}R_{k,1}(k,1)+\sum\limits_{i=0}^{n-2}A^{-2}\alpha_{n-1-i}(1,0)*\beta_{i}-\sum\limits_{i=0}^{n-3}A^{-4}\alpha_{n-2-i}\beta_{i}=$
	
	$A^{2n}(n,2)_{T}+A^{-2n}(n,0)_{T}+A^{2}[n/2]_{A^{4}}R_{n/2,1}(n/2,1)+A^{2n-4}y+[{\color{darkgreen}A^{2}R_{1,1}(1,1)}{\color{darkgreen}+\sum\limits_{k=2}^{(n-4)/2}A^{2}A^{4k-4}R_{k,1}(k,1)}+\sum\limits_{k=2}^{(n-4)/2}A^{2}[k-1]_{A^4}R_{k,1}(k,1)+A^{2}[n/2-2]_{A^{4}}R_{(n-2)/2,1}((n-2)/2,1)+A^{2n-6}R_{(n-2)/2,1}((n-2)/2,1)]+\sum\limits_{i=1}^{(n-2)/2}[i]_{A^4}R_{i-1,1}R_{i+1,1}+\sum\limits_{i=n/2}^{n-2}[n-1-i]_{A^4}R_{i-1,1}R_{i+1,1}+\sum\limits_{k=(n+2)/2}^{n-1}A^{2}[n-k]_{A^4}R_{k,1}(k,1)
	+\\\sum\limits_{i=0}^{n-2}A^{-2}\alpha_{n-1-i}(1,0)*\beta_{i}-\sum\limits_{i=0}^{n-3}A^{-4}\alpha_{n-2-i}\beta_{i}.$
	
	Thus, we have $(n,1)*(0,1)=\\
	A^{2n}(n,2)_{T}+A^{-2n}(n,0)_{T}+A^{2}[n/2]_{A^{4}}R_{n/2,1}(n/2,1)+A^{2n-4}y+{\color{darkgreen}[A^{2}R_{1,1}(1,1)}
	{\color{darkgreen}+\sum\limits_{k=2}^{(n-4)/2}A^{2}A^{4k-4}R_{k,1}(k,1)+}\\{\color{darkgreen}\sum\limits_{k=2}^{(n-4)/2}A^{2}[k-1]_{A^4}R_{k,1}(k,1)+A^{2}[n/2-2]_{A^{4}}R_{(n-2)/2,1}((n-2)/2,1)+A^{2n-6}R_{(n-2)/2,1}((n-2)/2,1)]}+ \\\sum\limits_{i=1}^{(n-2)/2}[i]_{A^4} 
	R_{i-1,1}R_{i+1,1}+\sum\limits_{i=n/2}^{n-2}[n-1-i]_{A^4}R_{i-1,1}R_{i+1,1}+{\color{red}\sum\limits_{k=(n+2)/2}^{n-1}A^{2}[n-k]_{A^4}R_{k,1}(k,1)}
	+\\ \sum\limits_{i=0}^{n-2}A^{-2}\alpha_{n-1-i}(1,0)*\beta_{i}-\sum\limits_{i=0}^{n-3}A^{-4}\alpha_{n-2-i}\beta_{i} =$ 
	$\\
	A^{2n}(n,2)_{T}+A^{-2n}(n,0)_{T}+{\color{darkgreen}\sum\limits_{i=1}^{(n-2)/2}A^{2}[i]_{A^{4}}R_{i,1}(i,1)}+A^{2}[n/2]_{A^{4}}R_{n/2,1}(n/2,1)+
	{\color{red}\sum\limits_{i=(n+2)/2}^{(n-1)}
		A^{2}[n-i]_{A^4}R_{i,1}(i,1)} + \\A^{2n-4}y+\sum\limits_{i=1}^{(n-2)/2}[i]_{A^4}R_{i-1,1}R_{i+1,1}+\sum\limits_{i=n/2}^{n-2}[n-1-i]_{A^4}R_{i-1,1}R_{i+1,1}+\sum\limits_{i=0}^{n-2}A^{-2}\alpha_{n-1-i}(1,0)*\beta_{i}-\sum\limits_{i=0}^{n-3}A^{-4}\alpha_{n-2-i}\beta_{i}.$
	
	Now, let us consider $\sum\limits_{i=0}^{n-2}A^{-2}\alpha_{n-1-i}(1,0)*\beta_{i}-\sum\limits_{i=0}^{n-3}A^{-4}\alpha_{n-2-i}\beta_{i}=\\A^{-2}\alpha_{n-1}(1,0)+\sum\limits_{i=1}^{n-2}\alpha_{n-1-i}[A^{-2}(1,0)*\beta_{i}-A^{-4}\beta_{i-1}].$ Now, \small{$$ A^{-2}(1,0)*\beta_{i}-A^{-4}\beta_{i-1}=\left\{
		\begin{array}{rcl}
		\beta_{i+1}      &      & {i\ even,}\\
		\beta_{i+1}-A^{2i+2}       &      & {i\ odd.}
		\end{array} \right. $$} 
	
So, $\sum\limits_{i=0}^{n-2}A^{-2}\alpha_{n-1-i}(1,0)*\beta_{i}-\sum\limits_{i=0}^{n-3}A^{-4}\alpha_{n-2-i}\beta_{i}=A^{-2}\alpha_{n-1}(1,0)+\sum\limits_{i=1}^{n-2}\alpha_{n-1-i}\beta_{i+1}-\sum\limits_{k=1}^{(n-2)/2}\alpha_{n-1-(2i-1)}A^{2(2i-1)+2}=A^{-2}\alpha_{n-1}(1,0)+\sum\limits_{i=2}^{n-1}\alpha_{n-i}\beta_{i}-\sum\limits_{i=1}^{(n-2)/2}\alpha_{n-1-(2i-1)}A^{2(2i-1)+2}=A^{-2}\alpha_{n-1}(1,0)+\sum\limits_{i=2}^{n-1}\alpha_{n-i}\beta_{i}-\sum\limits_{i=1}^{(n-2)/2}\alpha_{n-2i}A^{4i}=\sum\limits_{i=1}^{n-1}\alpha_{n-i}\beta_{i}-\sum\limits_{i=1}^{(n-2)/2}\alpha_{n-2i}A^{4i}.$
	
	Thus,
	
	$A^{2n}(n,2)_{T}+A^{-2n}(n,0)_{T}+\sum\limits_{i=1}^{(n-2)/2}A^{2}[i]_{A^{4}}R_{i,1}(i,1)+A^{2}[n/2]_{A^{4}}R_{n/2,1}(n/2,1)+\sum\limits_{i=(n+2)/2}^{n-1}A^{2}[n-i]_{A^4}R_{i,1}(i,1)+A^{2n-4}y+\sum\limits_{i=1}^{(n-2)/2}[i]_{A^4}R_{i-1,1}R_{i+1,1}+\sum\limits_{i=n/2}^{n-2}[n-1-i]_{A^4}R_{i-1,1}R_{i+1,1}+\sum\limits_{i=0}^{n-2}A^{-2}\alpha_{n-1-i}(1,0)*\beta_{i}-\sum\limits_{i=0}^{n-3}A^{-4}\alpha_{n-2-i}\beta_{i}=$
	
	$A^{2n}(n,2)_{T}+A^{-2n}(n,0)_{T}+\sum\limits_{i=1}^{(n-2)/2}A^{2}[i]_{A^{4}}R_{i,1}(i,1)+A^{2}[n/2]_{A^{4}}R_{n/2,1}(n/2,1)+\sum\limits_{i=(n+2)/2}^{n-1}A^{2}[n-i]_{A^4}R_{i,1}(i,1)+\sum\limits_{i=1}^{n-1}\alpha_{n-i}\beta_{i}+A^{2n-4}y+\sum\limits_{i=1}^{(n-2)/2}[i]_{A^4}R_{i-1,1}R_{i+1,1}+\sum\limits_{i=n/2}^{n-2}[n-1-i]_{A^4}R_{i-1,1}R_{i+1,1}-\sum\limits_{i=1}^{(n-2)/2}\alpha_{n-2i}A^{4i}=$
	
	$A^{2n}(n,2)_{T}+A^{-2n}(n,0)_{T}+\sum\limits_{i=1}^{(n-2)/2}A^{2}[i]_{A^{4}}R_{i,1}(i,1)+A^{2}[n/2]_{A^{4}}R_{n/2,1}(n/2,1)+\sum\limits_{i=(n+2)/2}^{n-1}A^{2}[n-i]_{A^4}R_{i,1}(i,1)+\sum\limits_{i=0}^{n-1}\alpha_{n-i}\beta_{i}.$
	
	The case when $n=N$ is odd can be proved in a similar way. 
\end{proof}

\end{spacing}

\section*{Acknowledgements}
J\'{o}zef H. Przytycki was partially supported by the Simons Foundation Collaboration Grant for Mathematicians - 316446 and CCAS Dean's Research Chair award. Marithania Silvero was partially supported by MTM2016-76453-C2-1-P and FEDER, and acknowledges financial support from the Spanish Ministry of Economy and Competitiveness, through the Mar\'ia de Maeztu Programme for Units of Excellence in R\&D (MDM-2014-0445).

\end{document}